\newcommand\red[1]{{\color{red}#1}}
\newcommand\blue[1]{{\color{blue}#1}}
\newcounter{mcomments}
\newcounter{gcomments}
\newcounter{kcomments}
\newlist{todolist}{itemize}{2}
\setlist[todolist]{label=$\square$}
\theoremstyle{plain}
\newtheorem{THM}{Theorem}[section]
\newtheorem{thm}[THM]{Theorem}
\newtheorem{PROP}[THM]{Proposition}
\newtheorem{prop}[THM]{Proposition}
\newtheorem{LEM}[THM]{Lemma}
\newtheorem{lemma}[THM]{Lemma}
\newtheorem{COR}[THM]{Corollary}
\newtheorem{CLAIM}{Claim}
\theoremstyle{definition}
\newtheorem{DEF}[THM]{Definition}
\newtheorem{definition}[THM]{Definition}
\newtheorem{RMK}[THM]{Remark}
\newtheorem{EX}[THM]{Example}
\newtheorem{example}[THM]{Example}
\newtheorem{Question}[THM]{Question}
\newtheorem{remark}[THM]{Remark}
\newtheorem*{assumptions}{Standing Assumptions}
\newenvironment{customthm}[1]
  {\innercustomthm}
  {\endinnercustomthm}
\DeclareMathOperator{\Aut}{Aut}
\DeclareMathOperator{\Homeo}{Homeo}
\DeclareMathOperator{\Map}{Map}
\DeclareMathOperator{\Stab}{Stab}
\DeclareMathOperator{\supp}{supp}
\newcommand{\Z}{\mathbb{Z}}
\newcommand{\R}{\mathbb{R}}
\newcommand{\N}{\mathbb{N}}
\newcommand{\Q}{\mathbb{Q}}
\newcommand{\cA}{\mathcal{A}}
\newcommand{\cB}{\mathcal{B}}
\newcommand{\cC}{\mathcal{C}}
\newcommand{\cU}{\mathcal{U}}
\newcommand{\cV}{\mathcal{V}}
\newcommand{\cZ}{\mathcal{Z}}
\def\S{{\Sigma}}
\def\O{{\Omega}}
\newcommand{\defeq}{:=}
\newcommand{\homeo}{\mathrel{\cong}} 
\title{Classification of Stable Surfaces with respect to Automatic Continuity}
\author{Mladen Bestvina, George Domat, Kasra Rafi}
\date{\today}
\begin{document}

\maketitle

\begin{abstract}
We provide a complete classification of when the homeomorphism group of a stable surface, $\Sigma$, has the 
automatic continuity property: Any homomorphism from $\Homeo(\S)$ to a separable group is necessarily 
continuous. This result descends to a classification of when the mapping class group of $\Sigma$ 
has the automatic continuity property. Towards this classification, we provide a general framework for proving 
automatic continuity for groups of homeomorphisms. Applying this framework, we also show that the homeomorphism 
group of any stable second countable Stone space has the automatic continuity property. Under the presence of 
stability this answers two questions of Mann. 
\end{abstract}

\section{Introduction}

Given a topological group, a natural question is: How does the algebra
of the group determine the topology of the group? Surprisingly, the
answer to this question can be ``almost entirely." A topological group, $G$, has
the \textbf{automatic continuity property} if every group homomorphism
from $G$ to a separable group $H$ is continuous. Recent examples of
groups with this property are the homeomorphism groups of the Cantor
set, $\R$ \cite{RS2007}, compact manifolds \cite{Rosendal2008,
  Mann2016}, compact manifolds with a Cantor set and a finite set removed \cite{Mann2024}, and some infinite-type surfaces \cite{Vlamis2024}. There is also a classification theorem \cite{Dickmann2023} for pure mapping class groups of infinite-type surfaces (allowing for noncompact boundary components). See \cite{Rosendal2009,RCS2024} for surveys of the question together with some historical context. 

In this paper, we prove a classification theorem for the homeomorphism
and mapping class groups of a large class of infinite-type
surfaces. For the collection of stable surfaces (see Definitions~\ref{stable} and \ref{stable-surface}) this gives a complete answer two questions of Mann (\cite[Question
  4.8]{AIMPL},\cite[Question 2.4]{Mann2024}). All our surfaces $\S$
will be connected, orientable and without boundary.

\begin{customthm}{A} \label{thm:surfaceclassification}
	Let $\S$ be a connected, stable, orientable surface without boundary. The homeomorphism group and mapping class group of $\S$ have automatic continuity if and only if every end of $\S$ is telescoping. That is, every end is one of the following:
	\begin{enumerate}
		\item An isolated puncture,
		\item of Cantor type, or
		\item is not isolated in the space of ends accumulated by genus and is a successor with all predecessors of Cantor type. 
	\end{enumerate}
\end{customthm}

The stability assumption on $\S$  rules out various pathological phenomena. In particular, stability ensures that $\Homeo(\S)$-orbits
  of ends are locally closed and hence each orbit is either locally a
  Cantor set or consists of isolated points
  (\Cref{lem:Cantor-type}), $\S$ has only finitely many equivalence classes of maximal ends (\Cref{prop:stablefinmax}), and allows us to prove an alternate characterization of ``telescoping'' (\Cref{lem:telnbhdannuli}). We make use of each of these in order to obtain a complete classification. In \Cref{ssec:unknownex} we offer an unstable surface for which we do not know whether its mapping
  class group satisfies automatic continuity. 

Here we make ample use of the language and perspective provided by \cite{MR2023}, particularly with respect to the partial order on the space of ends of a surface. The techniques used to prove the positive direction are extensions of those in \cite{RS2007, Rosendal2008, Mann2016, Mann2024}. In particular, the ``telescoping" condition can be thought of as requiring each end to have a neighborhood that exhibits similar behavior to a punctured disk. This condition allows one to decompose any neighborhood of an end into infinitely many homeomorphic ``annuli" such that one can shift the collection of annuli either towards or away from the end. We also use an Eilenberg-Mazur swindle \cite{Bass1963,Mazur1959} in order to deal with the issue of locally writing certain homeomorphisms as commutators. This type of argument does not appear in the previous proofs of automatic continuity.
For the negative direction we build on tools developed in \cite{Domat2022}. All of the discontinuous maps we exhibit factor through $\Map(\S)$ and are constructed in \cite{Domat2022} via actions on Gromov hyperbolic metric spaces. 

\begin{RMK}\label{rmk:failure}
	We note that some of our techniques used to prove the failure
        of automatic continuity do not require the surface to be
        stable. In particular, if $\Sigma$ (not necessarily stable)
        has an end that (1) has a countable $\Homeo(\S)$-orbit, and
        (2) has either a countable predecessor or is isolated in the
        space of ends accumulated by genus, then $\Homeo(\S)$ and
        $\Map(\S)$ will fail to have automatic continuity. This
        follows by directly applying the techniques of
        \Cref{ssec:acfailure}. Similarly, if $\Sigma$ has an end with
        a countable orbit that is also the accumulation point of a
        sequence of pairwise incomparable ends, then $\Homeo(\S)$ and
        $\Map(\S)$ fails to have automatic
        continuity. 
\end{RMK}

Notably, all of the discontinuous maps that we make use of have targets that contain isomorphic copies of $\Q$. This is perhaps not a coincidence, and further evidence for the following question of Conner (the question is originally stated for countable codomains, we extend it to separable codomains). 

\begin{Question}[{\cite[Question 5.2]{CC2019} and \cite[Conner's Conjecture]{CPV2021}}] \label{conj:conner}
	If $H$ is a torsion-free separable group that does not contain an isomorphic copy of $\Q$, must every homomorphism from a completely metrizable group to $H$ be continuous? 
\end{Question}

\begin{EX}\label{ex:examples} Here, we construct a few surfaces for which our theorems apply that were not covered by any previous results.
	\begin{enumerate} 
		\item Consider the surface, $M$, with end space two Cantor sets $C_{1} \cup C_{2}$ so that $C_{1} \cap C_{2}$ is exactly a single point. Furthermore, each end in $C_{1}$ is accumulated by genus. See \Cref{fig:monalisa}. Every end of this surface is telescoping and so $\Homeo(M)$ and $\Map(M)$ have the automatic continuity property. The end space $C_{1} \cap C_{2}$ is the prototypical example of a telescoping end. 
		\item Let $L$ be the Loch Ness monster surface, i.e. the surface with a infinite genus and a single end. If $N$ is the connect sum of $L$ with any orientable, boundaryless surface, then $\Homeo(N)$ and $\Map(N)$ fail to have the automatic continuity property. Here $L$ can also be replaced by any surface whose end space has a countable orbit under the action of the homeomorphism group. 
	\end{enumerate}
\end{EX}

\begin{figure}[h]
	  \centering
		    \includegraphics[width=\textwidth]{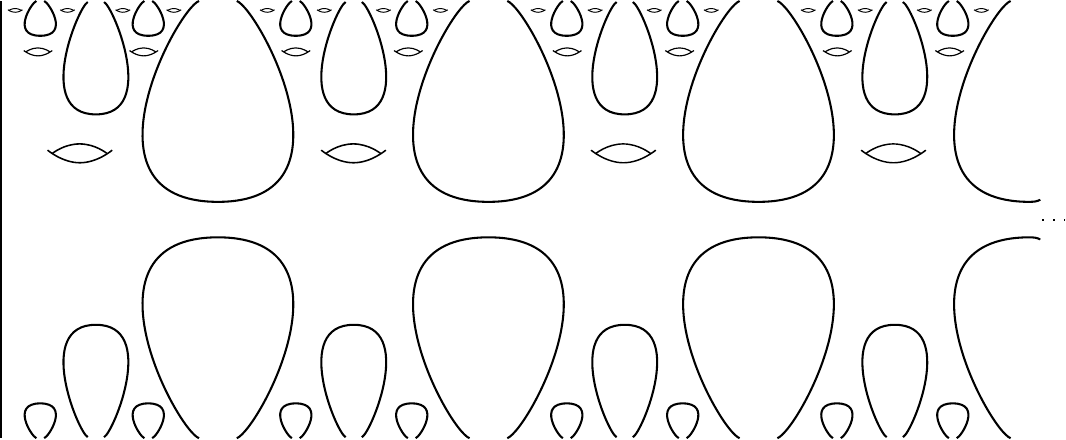}
		    \caption{An example of a surface for which \Cref{thm:surfaceclassification} applies in the positive direction.}
	    \label{fig:monalisa}
\end{figure}

The general techniques for proving automatic continuity developed below also apply to other classes of groups. We apply them to homeomorphism groups of end spaces, i.e., second countable Stone spaces (closed subsets of a Cantor set). Alternatively, by Stone Duality \cite{Stone1936}, these groups can also be thought of as automorphism groups of countable Boolean algebras. 

\begin{customthm}{B}\label{thm:endspaceac}
	Let $X$ be a second countable Stone space. If $X$ is stable, then $\Homeo(X)$ has automatic continuity. 
\end{customthm}

This answers \cite[Question 2.5]{Mann2024} under the
stability hypothesis. In particular, the stability condition can be thought of as a type of ``local homogeneity." Here, stability is defined as in \cite{MR2023} (see \Cref{stable} and \Cref{prop:stable}). We also prove an alternate characterization (\Cref{prop:stable}) of stability that allows us to view each point in $X$ as satisfying a type of ``telescoping" condition. 

By a classical result of Mazurkiewicz and Sierpi\'{n}ski \cite{MS1920}, every countable Stone space is homeomorphic to a countable ordinal of the form $\omega^{\alpha}\cdot n +1$ where $\alpha$ is a countable ordinal and $n \in \N$. Notably, all countable ordinals are stable and so as a corollary we have the following.

\begin{COR}\label{cor:ctbleordinals}
	Let $\omega^{\alpha}\cdot n +1$ be a compact, countable ordinal equipped with the order topology. The group $\Homeo(\omega^{\alpha}\cdot n +1)$ has the automatic continuity property. 
\end{COR}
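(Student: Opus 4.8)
The plan is to deduce this immediately from \Cref{thm:endspaceac}: writing $X := \omega^{\alpha}\cdot n + 1$ equipped with the order topology, it suffices to verify that $X$ is a second countable Stone space and that $X$ is stable, and then apply \Cref{thm:endspaceac}.

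First I would record that $X$ is a second countable Stone space. It is a successor ordinal, hence compact, and it is Hausdorff and countable; since there are only countably many ordinals below $x$ for each $x\in X$, every point has countable cofinality, so $X$ is first countable and therefore second countable. Being an ordinal, $X$ is scattered, hence totally disconnected; a compact, second countable, totally disconnected space is zero-dimensional, with the half-open intervals $(\beta,\gamma]\cap X$ forming a clopen basis. So $X$ is a second countable Stone space.

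The substantive step is stability, which I would check using the characterization in \Cref{prop:stable}, by exhibiting at each point $x\in X$ a clopen neighborhood basis witnessing the telescoping condition. If $x$ is a successor ordinal it is isolated and there is nothing to do. If $x$ is a limit ordinal, let $\gamma\ge 1$ be its Cantor--Bendixson rank and write $x=\xi+\omega^{\gamma}$, peeling off the last term of the Cantor normal form of $x$. The key elementary computation is that for every $\delta$ with $\xi\le\delta<x$, the interval $(\delta,x]$ has order type $\omega^{\gamma}+1$: writing $\delta=\xi+\rho$ with $\rho<\omega^{\gamma}$ and using that $\omega^{\gamma}$ is additively indecomposable (so $\sigma+\omega^{\gamma}=\omega^{\gamma}$ whenever $\sigma<\omega^{\gamma}$), one gets $\delta+1+\omega^{\gamma}=x$, hence $(\delta+1)+(\omega^{\gamma}+1)=x+1$. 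Therefore $\{(\delta,x]:\xi\le\delta<x\}$ is a neighborhood basis of $x$ by clopen sets, each order-isomorphic---hence homeomorphic---to $\omega^{\gamma}+1$ and totally ordered by inclusion. This nested, self-similar basis is precisely the telescoping data required by \Cref{prop:stable}, so every point of $X$ is stable and hence $X$ is stable. (Alternatively, one can simply quote from \cite{MR2023} that every countable ordinal is stable.)

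The hard part is nothing deep but lies entirely in this stability verification, and specifically in matching the nested self-similar neighborhoods $(\delta,x]\cong\omega^{\gamma}+1$ to the exact form of the telescoping condition in \Cref{prop:stable}; the ordinal arithmetic itself is routine, and once stability is in hand \Cref{thm:endspaceac} applies verbatim to conclude that $\Homeo(\omega^{\alpha}\cdot n+1)$ has automatic continuity.
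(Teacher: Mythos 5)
Your proposal is correct and follows the same route as the paper: both deduce the corollary from \Cref{thm:endspaceac} via the observation that every countable compact ordinal is stable. The paper simply asserts this stability (and leans on \cite{MS1920} for the structure of countable Stone spaces), whereas you supply the routine ordinal arithmetic---identifying a basis of clopen intervals $(\delta,x]\cong\omega^{\gamma}+1$ about each limit point $x$---to verify condition (ii) of \Cref{prop:stable}; this is a reasonable and accurate filling-in of what the paper leaves implicit.
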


Concurrent with this work, Hern\'{a}ndez-Hern\'{a}ndez--Hru\v{s}\'{a}k--Rosendal--Valdez \cite{HHRV} have also obtained the previous result. In fact, when $n=1$ they prove that $\Homeo(\omega^{\alpha} + 1)$ has ample generics, a stronger condition that implies automatic continuity. They can then leverage this ample generics property to show that $\Homeo(\omega^{\alpha} \cdot n +1)$ has automatic continuity for arbitrary $n$. 

\Cref{thm:surfaceclassification} and \Cref{cor:ctbleordinals} offer some insight as to how the cases of surfaces and end spaces differ. We have that $\Homeo(\omega^{\alpha}\cdot n +1)$ always has automatic continuity, but if $\S_{\alpha,n}$ is a surface with such an end space, then $\Homeo(\S_{\alpha,n})$ fails to have automatic continuity. Upon reflection, this is perhaps not surprising since the techniques used to prove this failure of automatic continuity are directly coming from the topology of the surface and the mapping class group. In particular, the hyperbolic spaces used in \cite{Domat2022} are built out of curve graphs using techniques from \cite{MM1999, BBF2015}.  

The only examples known to us of Stone spaces whose homeomorphism
groups have discontinuous homomorphisms come from constructions in
\cite{Rosendal2009, Mann2024}. For these examples, one has infinitely
many finite orbits or cardinality larger than one. Notably, the
condition on stability does not allow this to happen. This suggests
the following question towards a classification for general second
countable Stone spaces.

\begin{Question}\label{conj:endspace}
	Does there exist a second countable Stone space $X$ so that $\Homeo(X)$ does not surject onto an infinite product of finite groups and $\Homeo(X)$ fails to have automatic continuity?
\end{Question}

Returning to our original question, ``How does the algebra of the group determine the topology of the group?," an immediate application of the automatic continuity property is to say that the topology on the group is essentially unique. 

\begin{COR}\label{cor:uniquePolishSurfaces}
	Let $\S$ be a stable, orientable surface without boundary, all of whose ends are telescoping. The groups $\Homeo(\S)$ and $\Map(\S)$ both have unique Polish group topologies. 
\end{COR}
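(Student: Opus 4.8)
The plan is to derive \Cref{cor:uniquePolishSurfaces} as a formal consequence of \Cref{thm:surfaceclassification} together with two standard facts: first, that $\Homeo(\S)$ and $\Map(\S)$ are Polish groups; second, that a Polish group with the automatic continuity property has a unique Polish group topology. I would begin by recalling why the groups in question are Polish. For $\Homeo(\S)$ with the compact-open topology, this is classical: $\S$ is a second countable, locally compact, locally connected space, so $\Homeo(\S)$ is a Polish group (one can cite the standard reference, e.g., the discussion in \cite{MR2023} or the usual manifold mapping-class-group literature). The mapping class group $\Map(\S) = \Homeo(\S)/\Homeo_0(\S)$ is the quotient of a Polish group by a closed normal subgroup (the identity component, which is closed), hence is itself Polish.

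The second ingredient is the general principle that automatic continuity forces topological rigidity. I would state it as follows: if $G$ is a Polish group with the automatic continuity property and $\tau$ is any other Polish group topology on the underlying abstract group, then the identity map $(G,\tau_{\mathrm{orig}}) \to (G,\tau)$ and its inverse are both homomorphisms into separable (indeed Polish) groups, hence both continuous by automatic continuity, so $\tau = \tau_{\mathrm{orig}}$. This is the standard argument (it appears in \cite{RS2007,Rosendal2009}) and requires only that the target topology be separable, which Polish certainly is.

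Putting these together: by hypothesis $\S$ is stable, orientable, without boundary, and all its ends are telescoping, so \Cref{thm:surfaceclassification} applies and both $\Homeo(\S)$ and $\Map(\S)$ have the automatic continuity property. Since both groups are Polish, the rigidity principle above shows each admits a unique Polish group topology, which is exactly the claim.

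I do not anticipate a substantive obstacle here, since the corollary is purely a packaging of \Cref{thm:surfaceclassification}; the only points requiring care are citing the correct source for the Polishness of $\Map(\S)$ (one should be slightly careful that $\Homeo_0(\S)$ is closed, so that the quotient topology is again Polish) and phrasing the uniqueness argument so that it is clear that \emph{any} Polish — or even merely separable and completely metrizable as a group — topology is forced to coincide with the standard one. If one wants, one can note the stronger statement that the topology is determined among all separable group topologies refining or refined by no extra data, but the Polish formulation stated is the cleanest and is immediate.
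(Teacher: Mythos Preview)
Your approach matches the paper's, which simply remarks that the corollary follows from standard descriptive set theory arguments (citing \cite[Section 2.5]{Vlamis2024}). However, there is a small gap in your uniqueness argument. You claim that both the identity $(G,\tau_{\mathrm{orig}})\to(G,\tau)$ and its inverse are continuous by automatic continuity. The forward direction is fine: $(G,\tau_{\mathrm{orig}})$ has AC and $(G,\tau)$ is separable. But for the inverse $(G,\tau)\to(G,\tau_{\mathrm{orig}})$ you would need $(G,\tau)$ to have AC, and you have not established this---AC as defined here is a property of the \emph{topological} group, and it was proved (via the Steinhaus property) only for the standard topology $\tau_{\mathrm{orig}}$.

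The fix is standard and is presumably what the paper's reference intends: once the forward identity is continuous, it is a continuous bijective homomorphism between Polish groups, and any such map is automatically a homeomorphism (a Borel-measurable homomorphism between Polish groups is continuous by Pettis' theorem, and the inverse of a continuous bijection between Polish spaces is Borel). With that correction your argument is complete and coincides with the paper's intended reasoning.
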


\begin{COR}\label{cor:uniquePolishEnds}
	Let $X$ be a stable, second countable, Stone space. The group $\Homeo(X)$ has a unique Polish group topology. 
\end{COR}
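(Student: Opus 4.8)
The statement to prove is: if $X$ is a stable, second countable Stone space, then $\Homeo(X)$ has a unique Polish group topology. The plan is to combine three ingredients: (i) $\Homeo(X)$ carries a natural Polish group topology, namely the compact-open topology (equivalently the topology of uniform convergence for any compatible metric on the compact metrizable space $X$); (ii) the automatic continuity property from \Cref{thm:endspaceac}; and (iii) the fact that a Polish group topology on an abstract group is determined up to homeomorphism once we know every abstract isomorphism between two Polish group topologies on it is continuous in both directions.

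First I would note that $\Homeo(X)$ with the compact-open topology is a Polish group: $X$ is compact, metrizable and zero-dimensional, hence $\Homeo(X)$ is a closed subgroup of the Polish group of self-homeomorphisms of $X$ in the uniform topology, and in fact it is a separable, completely metrizable topological group. Call this topology $\tau$. Now suppose $\tau'$ is any other Polish group topology on the abstract group $G = \Homeo(X)$. I would apply \Cref{thm:endspaceac} twice: since $(G,\tau')$ is Polish, in particular separable, the identity map $(G,\tau) \to (G,\tau')$ is a homomorphism from $\Homeo(X)$ (with its topology $\tau$) to a separable group, hence continuous by automatic continuity. Symmetrically, $(G,\tau)$ is separable, so the identity map $(G,\tau') \to (G,\tau)$ is a continuous homomorphism. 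Therefore the identity is a continuous bijection in both directions, i.e.\ $\tau = \tau'$.

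One technical point worth spelling out: automatic continuity as stated gives continuity of homomorphisms into separable \emph{groups}, and we should make sure the relevant target is genuinely separable as a topological group; a Polish group is by definition separable and completely metrizable, so this is immediate, and no appeal to Baire-category refinements (of the kind sometimes needed when the target is only assumed separable and metrizable but not Polish) is required here. The argument is otherwise entirely formal once \Cref{thm:endspaceac} is in hand. The main ``obstacle'' is therefore not in this corollary at all but in the theorem it invokes; given that theorem, the proof is a two-line diagram chase, and the same remark applies verbatim to \Cref{cor:uniquePolishSurfaces}, using \Cref{thm:surfaceclassification} in place of \Cref{thm:endspaceac} and the standard fact that $\Homeo(\S)$ and $\Map(\S)$ are Polish groups.
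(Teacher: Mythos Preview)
Your overall plan is right, but the ``symmetrically'' step is a genuine gap. Automatic continuity, as established in \Cref{thm:endspaceac}, is a property of the \emph{topological} group $(\Homeo(X),\tau)$ with its standard compact-open topology $\tau$: it says homomorphisms \emph{from} $(G,\tau)$ to separable groups are continuous. It does not say anything about homomorphisms from $(G,\tau')$ for an arbitrary Polish topology $\tau'$ on the same abstract group. So you cannot invoke \Cref{thm:endspaceac} a second time to conclude that $\id:(G,\tau')\to(G,\tau)$ is continuous; that would require knowing $(G,\tau')$ has AC, which is exactly what is at stake.

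The standard fix---and this is what the paper's pointer to ``standard arguments in descriptive set theory'' is gesturing at---is that one application of AC suffices. From AC you get that $\id:(G,\tau)\to(G,\tau')$ is a continuous bijective homomorphism between Polish groups. Now use the open mapping theorem for Polish groups (equivalently: the inverse is Borel by Lusin--Souslin, and a Borel homomorphism between Polish groups is continuous by Pettis' theorem), which forces the map to be a homeomorphism and hence $\tau=\tau'$. With this correction your argument goes through, and the same remark applies to \Cref{cor:uniquePolishSurfaces}.
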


This follows from some standard arguments in descriptive set theory, e.g. see \cite[Section 2.5]{Vlamis2024}. 

\subsection*{Acknowledgments}
	The authors thank Kathryn Mann, Nick Vlamis, Jes\'{u}s Hern\'{a}ndez--Hern\'{a}ndez, Ferr\'{a}n Valdez, and Michael Hru\u{s}\'{a}k for several helpful conversations. The first author was supported by NSF DMS-2304774. The second author was partially supported by NSF DMS–1745670, NSF DMS-2303262, and the Fields Institute for Research in Mathematical Sciences. The third author was supported by NSERC Discovery Grant RGPIN-5507.

\subsection{The Five Step Program}\label{ssec:5step}

	We first provide a general outline for how to prove the automatic continuity property for a Polish group of homeomorphisms. We will follow this general outline in \Cref{sec:endspaces} and \Cref{sec:surfaces} for Stone spaces and surfaces, respectively. We remark that this general outline is inspired by the arguments found in \cite{RS2007, Rosendal2008, Mann2016, Mann2024}. We expect this general framework to be applicable to other spaces and Polish groups of transformations. In particular, our arguments should be applicable for subgroups of homeomorphisms provided they contain ``enough" maps of an appropriate type.

For all of our proofs of automatic continuity, we will actually prove that the Steinhaus property holds and apply a result of Rosendal-Solecki \cite{RS2007}.

\begin{DEF}
    A topological group $G$ has the \textbf{Steinhaus property} if there exists an $n \in \N$ such that, whenever $W$ is a symmetric subset of $G$ such that countably many translates of $W$ cover $G$, $W^{n}$ contains an open neighborhood of the identity in $G$. 
\end{DEF}

\begin{PROP}\cite[Proposition 2]{RS2007}
    If a topological group $G$ has the Steinhaus property, then it also has the automatic continuity property. 
\end{PROP}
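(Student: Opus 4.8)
The plan is to deduce continuity of an arbitrary homomorphism $\phi\colon G\to H$ into a separable group $H$ directly from the Steinhaus property, via a pigeonhole/translation argument. Since $\phi$ is a homomorphism between topological groups, it suffices to prove continuity at the identity: once $\phi^{-1}(V)$ is shown to contain an identity neighborhood for every identity neighborhood $V\subseteq H$, the identities $\phi(g)=\phi(g_0)\,\phi(g_0^{-1}g)$ together with continuity of the operations in $H$ give continuity at every $g_0$. So fix an open symmetric neighborhood $V\ni 1_H$, and let $n\in\N$ be the Steinhaus constant of $G$. Using continuity of the group operations in $H$, choose an open symmetric neighborhood $U\ni 1_H$ with $U^{2n}\subseteq V$.

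The heart of the matter is to manufacture, out of $U$, a single symmetric subset of $G$ countably many of whose translates cover $G$. Let $\{h_i\}_{i\in\N}$ be a countable dense subset of $H$. Then $\{h_iU\}_{i\in\N}$ covers $H$: for $h\in H$ the open set $hU$ meets the dense set, say $h_i=hu$ with $u\in U$, whence $h=h_iu^{-1}\in h_iU^{-1}=h_iU$ by symmetry of $U$. Pulling back, $G=\bigcup_i\phi^{-1}(h_iU)$. Discard the empty terms and choose $g_i\in\phi^{-1}(h_iU)$ for each surviving $i$, and set $W\defeq\phi^{-1}(U^2)$. This $W$ is symmetric, since $U^2$ is symmetric (as $U$ is) and preimages under a homomorphism preserve symmetry. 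Moreover, if $g\in\phi^{-1}(h_iU)$ then $\phi(g_i^{-1}g)=\phi(g_i)^{-1}\phi(g)\in U^{-1}h_i^{-1}h_iU=U^2$, so $g\in g_iW$; hence $\phi^{-1}(h_iU)\subseteq g_iW$, and therefore countably many translates $\{g_iW\}_i$ cover $G$.

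Now apply the Steinhaus property to the symmetric set $W$: $W^n$ contains an open neighborhood $O$ of $1_G$. Since $W=\phi^{-1}(U^2)$ and $\phi$ is a homomorphism, $W^n\subseteq\phi^{-1}\big((U^2)^n\big)=\phi^{-1}(U^{2n})\subseteq\phi^{-1}(V)$. Thus $\phi^{-1}(V)\supseteq O$ is an identity neighborhood, which is exactly continuity at $1_G$, and hence $\phi$ is continuous.

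I do not foresee a genuine obstacle here — this is the self-contained input from \cite{RS2007} that the rest of the paper leverages — but the one point that needs care is the passage from a countable cover of $H$ by translates of $U$ to a countable cover of $G$ by translates of a \emph{single symmetric} set: one must use $\phi^{-1}(U^2)$ rather than $\phi^{-1}(U)$, both so that symmetry is retained and so that the resulting bound $W^n\subseteq\phi^{-1}(U^{2n})$ lands inside the prescribed neighborhood $V$, which is why $U$ is chosen with $U^{2n}\subseteq V$ at the outset using the fixed, $G$-dependent exponent $n$.
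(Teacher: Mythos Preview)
Your argument is correct and is essentially the standard proof from \cite{RS2007}. Note that the present paper does not supply its own proof of this proposition --- it is quoted as an external input --- so there is nothing in the paper to compare against; your write-up faithfully reproduces the Rosendal--Solecki argument (pull back a countable cover of $H$ by translates of a symmetric identity neighborhood, repackage as translates of a single symmetric $W=\phi^{-1}(U^2)$, then apply the Steinhaus exponent).
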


Throughout this section we will give vague definitions that will each
be modified to fit the subsequent sections. All of our proofs require
some sort of \textbf{locally telescoping} condition. This condition
can be thought of as a type of strong local homogeneity of the
space. In particular, it will allow us to decompose a neighborhood,
$\Omega$, of a point into an uncountable collection of
\textbf{bricks}. These bricks will all be pairwise homeomorphic via
homeomorphisms supported only on $\Omega$ and the complement of each brick
must again be a brick. Furthermore, each brick must itself contain uncountably many bricks. This may give some insight
into why we require ends to be of Cantor type in
\Cref{thm:surfaceclassification}.

\begin{EX} Here we give three examples of bricks. The first is in a Stone space and the second two are in surfaces.
	\begin{enumerate}
		\item Let $\displaystyle X = \left\{\frac{1}{n}\bigg | n \in \N\right\} \cup \{0\} \subset \R$ equipped with subspace topology. A brick of $X$ is then a countably infinite subset of points (not including $0$) so that its complement is also countably infinite. Such a subset is sometimes referred to as a \emph{moiety} in the literature. Here $\Homeo(X)$ is isomorphic to the symmetric group on $\N$.
		\item Let $\S_{1}$ be a punctured sphere. Then a brick about the puncture, $p$, of $\S_{1}$ is a locally finite collection of disjoint annuli $\{A_{i}\}$ such that each $A_{i}$ separates $p$ from $A_{j}$ for all $j<i$.
		\item Let $M$ be the surface described in the \Cref{ex:examples} above and let $e = C_{1} \cap C_{2}$. Then we say a big annulus is a subsurface $A$ so that
		\begin{itemize}
			\item $A$ is bounded by two curves, $\gamma_{1}$ and $\gamma_{2}$, so that $\gamma_{2}$ is contained in the complementary component of $\gamma_{1}$ witnessing the end e, and 
			\item the end space of $A$ intersects both $C_{1}$ and $C_{2}$ non-trivially. 
		\end{itemize}
		We then define a brick to be a countably infinite, locally finite, disjoint union of big annuli. See \Cref{fig:monalisaannuli}.
	\end{enumerate}
\end{EX}

\begin{figure}[h]
	  \centering
		    \includegraphics[width=\textwidth]{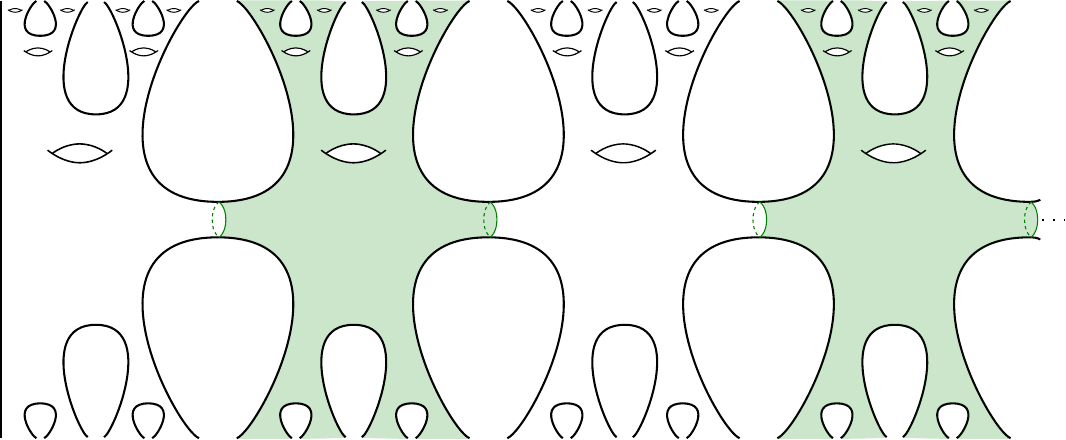}
		    \caption{An example of a brick (in green) comprised of big annuli in a surface.}
	    \label{fig:monalisaannuli}
\end{figure}

Now let $G = \Homeo(X)$ for some compact space $X$ and $W$ be a symmetric set in $G$ so that countably many translates of $W$ cover $G$. First we will always apply a standard Baire category argument (\Cref{lem:baire}) to find an open neighborhood of the identity $U \subset G$ so that $W^{2}$ is dense in it. This $U$ will determine some finite partition of $X$ into $\Omega_{1}\sqcup \Omega_{2} \sqcup \cdots \sqcup \Omega_{m}$ so that any $g \in G$ preserving each of the $\Omega_{i}$ belongs to $U$. Next we use compactness to further partition each $\Omega_{i}$ into telescoping neighborhoods. This reduces the problem of proving Steinhaus to the following claim. We let $G(\Omega)$ denote the subgroup of $G$ consisting of homeomorphisms supported on $\Omega\subset X$. 

\begin{CLAIM}\label{claim:telescoping}
	Let $\Omega$ be telescoping with respect to $y$. There exists an $N\geq 0$ so that if $W \subset G$ is symmetric, with countable many translates covering $G(\Omega)$, and $W^{2}$ dense in $G(\Omega)$, then $G(\Omega) \subset W^{N}$.
\end{CLAIM}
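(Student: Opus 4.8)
The plan is to establish the Steinhaus-type conclusion $G(\Omega)\subseteq W^{N}$ by using the telescoping structure of $\Omega$ to separate a ``global'' Baire-category input from a ``local'' Eilenberg--Mazur swindle. Fix once and for all a telescoping decomposition of $\Omega$: a locally finite sequence $\{C_{n}\}_{n\ge 0}$ of pairwise homeomorphic bricks accumulating only at $y$ whose union is a neighbourhood of $y$, together with a shift $\sigma\in G(\Omega)$ with $\sigma(C_{n})\subseteq C_{n+1}$. The telescoping hypotheses guarantee that the complement of each brick is again a brick and that each brick contains a further such sequence; this is the flexibility I will reuse throughout. Write $P=\prod_{n\ge 0}G(C_{n})\le G(\Omega)$ for the subgroup of homeomorphisms supported on $\bigcup_{n}C_{n}$ and acting independently on each $C_{n}$, and $P_{m}=\prod_{n\ge m}G(C_{n})$ for its tails; since the bricks are locally finite and accumulate only at $y$, each of these is a closed, hence Polish, subgroup of $G(\Omega)$. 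The proof rests on two pillars: an absolute constant $k_{0}$ with $P_{m}\subseteq W^{k_{0}}$ for some $m$, and a decomposition of an arbitrary $g\in G(\Omega)$ into boundedly many homeomorphisms to which the first pillar applies.

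For the first pillar I would run the standard Baire-category argument (the analogue of \Cref{lem:baire}, following \cite{RS2007,Rosendal2008}) inside the Polish group $P$. Symmetry of $W$ together with the covering hypothesis lets one first replace the translating elements by elements of $G(\Omega)$, at the cost of passing from $W$ to $W^{2}$ --- this is where the density of $W^{2}$ in $G(\Omega)$ enters --- so that countably many left-translates of $W^{2}\cap P$ cover $P$; Baire category then makes $W^{2}\cap P$ non-meager, hence somewhere dense, in $P$, and the standard passage (imported verbatim from \cite{RS2007}) from ``somewhere dense'' to ``contains a power of $W$'' produces an absolute $k_{0}$ and an $m$ with $P_{m}\subseteq W^{k_{0}}$.

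For the second pillar, call $h\in G(\Omega)$ \emph{portable} if there are $\psi_{0}=\id,\psi_{1},\psi_{2},\dots\in G(\Omega)$ with the sets $\psi_{i}(\supp h)$ pairwise disjoint and, for $i\ge 1$, contained in distinct bricks of the tail $\{C_{n}\}_{n\ge m}$. Put $\bar h:=\prod_{i\ge 0}\psi_{i}h\psi_{i}^{-1}$, a well-defined element of $G(\Omega)$ by local finiteness; since all factors commute, peeling off $i=0$ gives $\bar h=h\cdot\bigl(\prod_{i\ge 1}\psi_{i}h\psi_{i}^{-1}\bigr)$, so that $h=\bar h\cdot\bigl(\prod_{i\ge 1}\psi_{i}h\psi_{i}^{-1}\bigr)^{-1}$. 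The second factor lies in $P_{m}$, and once $\supp h$ is itself contained in the tail, $\bar h$ lies in $P_{m}$ as well; hence $h\in W^{2k_{0}}$ by the first pillar. The Eilenberg--Mazur point is that $h$ is recovered with no surviving conjugator --- in particular the shift $\sigma$ is ``spent'' inside the two product-subgroup elements and never has to be controlled on its own. The same computation applied to $h=[a,b]$ with $a,b$ supported in a common portable set shows such commutators also lie in $W^{2k_{0}}$.

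It then remains to write an arbitrary $g\in G(\Omega)$ as a bounded-length product of portable homeomorphisms and commutators of the above kind, and this is where I expect the real difficulty and the novelty to lie. Because $\Omega$ is non-compact one cannot fragment $g$ into a bounded number of ball-like pieces: any brick decomposition has infinitely many pieces, and a generic $g$ respects none of them; moreover the ``global'' part of $g$ --- a homeomorphism whose support is not displaceable towards $y$, say one supported on an entire brick --- cannot be made portable by conjugating within $G(\Omega)$ without introducing a shift-type conjugator that is not itself portable. The plan here is to adapt the Mather--Thurston fragmentation scheme to the telescoping setting: reduce, using density of $W^{2}$, to $g$ supported in a single brick $B$; realize a copy of $g$ squished into a sub-brick $C_{0}\subset B$ as an element of a product subgroup by applying the second-pillar swindle to $g$; and then recover $g$ itself by a further swindle that absorbs the squishing map, exploiting that $C_{0}\cong B$ and that complements of bricks are bricks. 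It is precisely this collapsing of an infinite decomposition to a bounded one, and the rewriting of the non-displaceable residue as boundedly many commutators of portable maps, that is absent from the compact-manifold and Cantor-set arguments of \cite{RS2007,Rosendal2008,Mann2016,Mann2024}; making every ``we may assume'' cost only an absolute number of copies of $W$ is the bookkeeping that needs the most care, and yields the absolute $N$ with $G(\Omega)\subseteq W^{N}$.
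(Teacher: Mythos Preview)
Your outline has the right aroma (Baire input, product subgroups, a swindle), but it has a genuine gap at exactly the place you flag as ``the real difficulty,'' and a second, quieter gap in the first pillar.

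\medskip

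\textbf{First pillar.} The passage ``$W^{2}\cap P$ somewhere dense in $P$ $\Rightarrow$ $P_{m}\subseteq W^{k_{0}}$'' is not something you can import verbatim from \cite{RS2007}. Density in a basic open set of the product $P=\prod_{n}G(C_{n})$ only tells you that elements of a tail $P_{m}$ can be \emph{approximated} by $W^{k}$, not that they \emph{lie} in any $W^{k}$. In $S_{\infty}$ the upgrade uses a commutator trick specific to that group. In the paper the upgrade is done differently: a diagonal argument (choose for each $i$ a putative bad $h_{i}$ on a subbrick $\mathcal A_{i}$, multiply them, derive a contradiction) yields only that for \emph{some} subbrick $\mathcal A_{i}$ every homeomorphism of $\mathcal A_{i}$ \emph{extends} --- not by the identity, but arbitrarily on the rest of the ambient brick --- to an element of $W^{2}$. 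To turn ``extends'' into ``is,'' the paper writes $h\in G(\mathcal Z)$ as a single commutator $[u,v]$ with $u,v$ supported in a slightly larger brick (Anderson's trick), then replaces $u,v$ by approximants $\bar u,\bar v\in W^{2}$ whose commutator is still $h$. That nested-commutator step is the content you are skipping; without it you do not get $P_{m}\subseteq W^{k_{0}}$, only ``every element of $P_{m}$ is the restriction of something in $W^{k}$.'' Your second pillar, incidentally, does nothing once the first pillar is granted: if $\supp h$ already sits in a single $C_{n}$ with $n\ge m$ then $h\in P_{m}$ trivially, and if it does not then $\bar h\notin P_{m}$, since $P_{m}$ consists of maps preserving each $C_{n}$.

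\medskip

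\textbf{The real gap.} The step you call ``a further swindle that absorbs the squishing map'' cannot be made to work as stated. If $g$ is supported on a full brick $B$, the natural swindle $g=(\prod_{i\ge 0}\sigma^{i}g\sigma^{-i})(\prod_{i\ge 1}\sigma^{i}g\sigma^{-i})^{-1}$ expresses $g$ as a quotient of two homeomorphisms each supported on an \emph{infinite union of bricks}, not on a single $C_{n}$; neither factor lands in $P_{m}$, and you have merely traded one non-portable map for two. What is missing is a way to produce, \emph{inside $W^{2}$ itself}, a conjugator that pushes an arbitrary brick into the good one. The paper supplies this with a pigeonhole on an uncountable almost-disjoint family: pick uncountably many infinite $\Lambda_{\alpha}\subset\mathbb N$ with pairwise finite intersections, build for each $\alpha$ a homeomorphism $f_{\alpha}\in G(\Omega)$ swapping the subbrick indexed by $\Lambda_{\alpha}$ with its complement; since there are uncountably many $f_{\alpha}$ but only countably many translates $g_{i}W$, two of them satisfy $f_{\beta}^{-1}f_{\alpha}\in W^{2}$, and conjugating by this element carries all but finitely many pieces of any brick into the good brick $\mathcal Z$. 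The finite residue is handled by a single further $W^{2}$-conjugation coming from the density hypothesis. This pigeonhole step is the idea your sketch does not contain, and without it the bound $N$ cannot be made absolute.
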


Proving this claim will proceed through several standard steps.

\textbf{Step 1: Fragmentation} First we will fragment any $g \in G(\Omega)$ into two maps, $g=g_{1}g_{2}$ so that each $g_{i}$ fixes a point $y_{i} \in \Omega$ and so that $\Omega$ is telescoping with respect to $y_{i}$. Next we fragment again to write such a map as a product of two maps supported on bricks. The final outcome is that given any $g \in G$ we can write $g=g_{1}g_{2}g_{3}g_{4}$ with each $g_{i}$ supported on a brick in $\Omega$. This reduces the problem to finding some $N$ so that any map supported on a brick is in $W^{N}$. 

\textbf{Step 2: Finding Commutators} Next we want to realize any map supported on a brick as a uniformly finite product of commutators, each of which is supported on a single brick. This step will always proceed by using a version of the Anderson Trick \cite{Anderson1958}. How exactly it is implemented will vary depending on our situation and the exact form of the ``telescoping condition'' above. This will depend on what types of shift maps exist in $G$. E.g., in the case of a 0-dimensional space, we have a well-defined shift map supported on a single brick that we can use to write any map as a single commutator. However, in the surface case we will only have shift maps supported in each connected component of a brick. In this case we will have to fragment again using an Eilenberg-Mazur swindle \cite{Bass1963,Mazur1959}, taking advantage of the existence of certain infinite products of maps. 

\textbf{Step 3: Diagonal Argument} We now use the above in order to find one specific ``good'' brick $\cB$ in $\Omega$ so that any $g$ supported on $\cB$ is in $W^{N}$. This part of the argument will be fairly standard throughout. In fact, the arguments in this step are taken almost directly from \cite{Mann2016, Mann2024,Rosendal2008,RS2007}.

\textbf{Step 4: Pigeonhole} Next we run a pigeonhole argument to find elements of $W^{N}$ that move any brick into our preferred brick $\cB$. This will make use of some part of the ``telescoping condition'' that guarantees that we can map any brick onto any other brick. This will again proceed in a fairly standard fashion in each case following \cite{Mann2024}. 

\textbf{Step 5: Wrapping Up} Finally, we put all of the above pieces together to prove \Cref{claim:telescoping}. Then applying a Baire category argument, \Cref{lem:baire}, we upgrade this to obtain Steinhaus, and thus automatic continuity, for $G(\Omega)$ whenever $\Omega$ is a telescoping neighborhood.

\section{Background on Automatic Continuity}\label{sec:background}

\begin{DEF}
    A topological group $G$ has the \textbf{automatic continuity property (AC)} if every homomorphism from $G$ to a separable group is continuous.
\end{DEF}

Using an induction construction we can see that if a closed countable-index subgroup fails to have AC, then so does the larger group. See also the introduction of \cite{Rosendal2009} for a more direct proof of this.  Suppose $G$ is a group and $H<G$ a subgroup. Suppose also that $H$ acts on the left on a set $X$. The goal is to construct a larger set $Y$ on which $G$ acts, extending the action of $H$ on $X$. This will not be a literal extension, but it will be close enough.  

Start with the action of $G$ on $G \times X$ by $g\cdot (g',x) = (gg',x)$. Define the following equivalence relation on $G \times X$: 
\begin{align*}
    (g,x) \sim (gh,h^{-1}x) \text{ for all } h \in H.
\end{align*}
Let $Y$ be the quotient set. Note that:
\begin{itemize}
    \item $G$ acts on $Y$ via $g\cdot [g',x] = [gg',x]$.
    \item If $\{g_{i}\}$ is a set coset representatives so that $G = \bigsqcup_{i \in \N} g_{i}H$, then 
    \begin{align*}
        Y = \bigsqcup_{i \in \N} \{[g_{i},x]\vert x \in X\}.
    \end{align*}
    Moreover, we can canonically identify $\{[1,x]\vert x \in X\}$ with $X$ via the map $[1,x] \mapsto x$ and under this identification we have $\{[g_{i},x]\vert x \in X\} = g_{i}X$ so that $Y =\bigsqcup_{i\in\N} g_{i}X$. 
\end{itemize}

\begin{LEM} \label{lem:induction}
    Let $G$ be a topological group, $H < G$ a countable index subgroup, and $f:H \rightarrow A$ a discontinuous homomorphism to a countable discrete group. Then there is a separable topological group $B$ (isomorphic to $S_{\omega} = \Homeo(\omega+1)$) and a discontinuous homomorphism $F:G \rightarrow B$. 
\end{LEM}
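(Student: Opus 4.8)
The plan is to apply the quotient construction described immediately above to the particular $H$-set $X = A$, where $H$ acts on the underlying set of $A$ by left translation through $f$, namely $h \cdot a = f(h)\,a$. Running that construction produces a countable set $Y = \bigsqcup_{i} g_i X$ carrying a $G$-action, and hence a homomorphism $F \colon G \to \operatorname{Sym}(Y)$ into the full permutation group of $Y$. Give $Y$ the discrete topology; then $\operatorname{Sym}(Y)$, with the topology of pointwise convergence, is a Polish group — in particular separable — and, when $Y$ is countably infinite, it is isomorphic to $S_\omega = \Homeo(\omega+1)$, since a self-homeomorphism of the convergent sequence $\omega+1$ must fix its unique non-isolated point and may permute the isolated points by an arbitrary bijection. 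The set $Y$ is automatically infinite unless both $A$ and $[G:H]$ are finite; in that degenerate case one simply replaces $X = A$ by $X = A \sqcup \N$ with $H$ acting trivially on the extra points, and nothing below changes. Thus we may take $B = \operatorname{Sym}(Y) \cong S_\omega$.

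It remains to check that $F$ is discontinuous, and the mechanism is that $f$ is visible inside $F$ along the base copy $X \cong \{[1,x] : x \in A\} \subset Y$. For $h \in H$ and the identity $e \in A$, the defining equivalence relation gives $(h, e) \sim \bigl(h\cdot h^{-1},\, (h^{-1})^{-1}\cdot e\bigr) = (1,\, h\cdot e) = (1,\, f(h))$, so $F(h)\cdot[1,e] = [h,e] = [1, f(h)]$; since $[1,a] = [1,a']$ forces $a = a'$, the permutation $F(h)$ fixes the point $[1,e] \in Y$ precisely when $h \in \ker f$. Now $\Stab_B([1,e])$ is open in $B$, so if $F$ were continuous then $F^{-1}\bigl(\Stab_B([1,e])\bigr)$ would be open in $G$, whence $\ker f = F^{-1}\bigl(\Stab_B([1,e])\bigr) \cap H$ would be open in the subspace topology on $H$. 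For a homomorphism to a discrete group, continuity is equivalent to openness of the kernel, so this contradicts the hypothesis that $f$ is discontinuous. Hence $F$ is discontinuous, as required.

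I do not expect a real obstacle here: the argument is a short bookkeeping exercise on top of the construction preceding the statement. The two points that need care are, first, the verification in $Y$ that $F$ restricted to the base copy $X$ is left translation by $f(h)$, so that the stabilizer of $[1,e]$ pulls back exactly to $\ker f$; and second, the observation that $H$ need not be open in $G$, so the final step uses only that the trace on $H$ of an open set in $G$ is open in $H$ — no regularity of $H$ inside $G$ is invoked. The passage to $A \sqcup \N$ is a purely cosmetic device guaranteeing that the target is literally $S_\omega$ rather than a finite symmetric group in the degenerate case.
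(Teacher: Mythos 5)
Your proof is correct and follows the same approach as the paper: apply the induction construction to $X=A$, land in $\operatorname{Sym}(Y)\cong S_\omega$, and detect discontinuity by restricting to $H$ and recovering $f$. You are somewhat more explicit than the paper in verifying the discontinuity via an open point-stabilizer, and you correctly note (and patch) the degenerate case where $Y$ could be finite, a point the paper glosses over but which does not arise in its applications.
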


\begin{proof}
    Take $X = A$ equipped with the discrete topology. The subgroup $H$ acts via left multiplication on $X$ via $f$, i.e. $h\cdot x = f(h)x$. Let $Y \supset X$ be as above with the induced action of $G$. Thus we have a homomorphism $F:G \rightarrow \Aut(Y) =: B$, the group of bijections of $Y$. Note first that $Y$ is countable. Indeed, $Y =\bigsqcup_{i\in\N} g_{i}X$, a countable union of countable sets. Thus the group of bijections $\Aut(Y)$ with the permutation topology is isomorphic to $S_{\omega}$. It remains to show that $F$ is discontinuous. It suffices to argue that the restriction to $H$ is discontinuous. The restricted action preserves $X \subset Y$, so it suffices to argue that $H \rightarrow \Aut(X)$ is discontinuous. The image of this homomorphism is contained in the group of left translations of $X = A$, which can be identified with $A$. In fact, the image of this homomorphism can be exactly identified with the image of the original $f:H \rightarrow A$. Thus we see that the homomorphism $H \rightarrow \Aut(X)$ is discontinuous by assumption. 
\end{proof}

The starting point of all of our arguments will be the following standard application of the Baire Category theorem.

\begin{LEM} \label{lem:baire}
	Let $G$ be a Polish group. If $W$ is a symmetric subset of $G$ such that countably many translates of $W$ cover $G$, then the closure of $W^{2}$ contains a neighborhood of the identity in $G$. 
\end{LEM}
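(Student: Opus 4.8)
The plan is to run the classical Baire category argument, being careful to work with $\overline{W}$ throughout since $W$ itself need not be closed. Left translation by any $g\in G$ is a homeomorphism of $G$, so $\overline{gW}=g\,\overline{W}$ for every $g$. By hypothesis there are $g_1,g_2,\dots\in G$ with $G=\bigcup_n g_nW$, hence also $G=\bigcup_n g_n\overline{W}$, a countable union of closed sets. Since $G$ is Polish it is a Baire space, so some $g_{n_0}\overline{W}$ has nonempty interior, and translating back by $g_{n_0}^{-1}$ (again a homeomorphism) shows $\overline{W}$ has nonempty interior. Fix a nonempty open set $V\subseteq\overline{W}$ and a point $w_0\in V$.

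Next I would produce the desired neighborhood of the identity. Set $U:=w_0^{-1}V$; this is open, since left translation by $w_0^{-1}$ is a homeomorphism, and it contains $1=w_0^{-1}w_0$, so it is an open neighborhood of the identity. I claim $U\subseteq\overline{W^2}$. First, since $W=W^{-1}$ and inversion is a homeomorphism of $G$, we have $\overline{W}=\overline{W^{-1}}=(\overline{W})^{-1}$, so $w_0^{-1}\in(\overline{W})^{-1}=\overline{W}$. Now take any $g\in U$. Then $w_0g\in V\subseteq\overline{W}$, so $g=w_0^{-1}(w_0g)\in\overline{W}\cdot\overline{W}$. Finally, writing $m\colon G\times G\to G$ for the (continuous) multiplication map, continuity gives $\overline{W}\cdot\overline{W}=m\big(\overline{W}\times\overline{W}\big)=m\big(\overline{W\times W}\big)\subseteq\overline{m(W\times W)}=\overline{W^2}$, using that in a product space the closure of a product is the product of the closures. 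Hence $U\subseteq\overline{W^2}$, which is exactly the assertion.

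There is no genuine obstacle here: the statement is the standard opening move of every Steinhaus-type argument, and the entire content is the Baire category theorem together with the observation that translations and inversion are homeomorphisms of $G$ and that multiplication is continuous. The only point requiring a moment's attention is that $W$ may fail to be closed, which is why each step is phrased in terms of $\overline{W}$ rather than $W$, and why we pass from $W^2$ to its closure in the conclusion.
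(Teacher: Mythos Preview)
Your proof is correct and follows essentially the same approach as the paper: apply Baire category to the covering by translates, then translate back to produce a neighborhood of the identity. The only difference in execution is that the paper observes one can choose the base point in $W$ itself (since $g_iW$ is dense in $\overline{g_iW}$, some $g_iw\in g_iW$ lies in the interior), which gives $w^{-1}W\subseteq W^2$ directly and avoids your appeal to continuity of multiplication to pass from $\overline{W}\cdot\overline{W}$ into $\overline{W^2}$; your route is a hair longer but equally valid.
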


\begin{proof}
	 Write $G = \bigcup g_{i}W$. By Baire category, some $\overline{g_iW}$ has nonempty
  interior. Choose some $g_iw\in g_iW$ that belongs to this
  interior. Then the closure of $w^{-1}g_i^{-1}\cdot
  g_iW=w^{-1}W\subseteq W^2$
  contains a neighborhood of the identity.
\end{proof}

\section{End Spaces}\label{sec:endspaces}

Let $X$ be a compact totally disconnected metrizable space (i.e., a second countable Stone space). 
In this section we investigate whether the group $\Homeo(X)$ has  the automatic continuity property. More
generally, we fix a collection $\{X_\alpha\}_{\alpha\in\mathcal A}$ of
closed subsets of $X$ and we consider the group
$\Homeo(X,\{X_\alpha\}_{\alpha\in\mathcal A})$ of homeomorphisms that
preserve each $X_\alpha$ (setwise). We think of each
$\alpha\in\mathcal A$ as a color, so elements of
$\Homeo(X,\{X_\alpha\}_{\alpha\in\mathcal A})$ are color preserving
homeomorphisms of $X$. To simplify notation we omit the colors and
talk about $\Homeo(X)$, but coloring is understood. 
That is, when we say $\phi : U \to V$ is a homeomorphism from 
$U \subset X$ to $V \subset X$ we always assume that, for every 
$\alpha \in \mathcal A$, 
\[
\phi(U \cap X_\alpha) = V \cap X_\alpha.
\] 
The coloring is motivated by Section~\ref{sec:surfaces} where we need to distinguish 
between planar and non-planar ends of a surface. In the first reading, 
the reader may think about the case $\mathcal
A=\emptyset$.
The goal of this section is to prove the following theorem. The
definition of stability is below in Definition \ref{stable}.

\begin{THM}\label{thm:endsmain}
	Let $X$ be a second countable Stone space. If $X$ is stable
        (relative to the given colors), then $\Homeo(X)$ has the Steinhaus property and therefore the automatic continuity property.
\end{THM}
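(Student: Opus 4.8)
The plan is to follow the Five Step Program outlined in Section~\ref{ssec:5step}, specialized to the $0$-dimensional setting. First I would establish the combinatorial infrastructure: using the stability hypothesis (Definition~\ref{stable} and the alternate characterization \Cref{prop:stable}), I would show that $\Homeo(X)$-orbits of points are locally closed, so each orbit is either a discrete set of isolated points or locally a Cantor set (\Cref{lem:Cantor-type}). I would then define, for a point $y$ whose orbit is of Cantor type, what it means for a clopen neighborhood $\Omega$ of $y$ to be \textbf{telescoping}: it decomposes into a countable collection of pairwise-homeomorphic \textbf{bricks} (moieties in the appropriate colored sense), with the complement of each brick again a brick, each brick itself containing infinitely many bricks, and a well-defined \emph{shift} homeomorphism supported on $\Omega$ permuting the bricks. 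Compactness of $X$ together with stability should let me cover $X$ by finitely many telescoping neighborhoods, which via the Baire category lemma (\Cref{lem:baire}) reduces \Cref{thm:endsmain} to \Cref{claim:telescoping}.

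Next I would carry out the proof of \Cref{claim:telescoping} through its five steps. For \textbf{Step 1 (Fragmentation)}, given $g \in G(\Omega)$, I would pick a point $y'$ in the orbit of $y$ not in the closure of the support-overlap, write $g = g_1 g_2$ with each $g_i$ fixing a point relative to which $\Omega$ is still telescoping, then fragment each further so that $g = g_1 g_2 g_3 g_4$ with each $g_i$ supported on a single brick. For \textbf{Step 2 (Finding Commutators)}, because a $0$-dimensional space admits an honest shift map supported on a single brick (unlike the surface case, where the Eilenberg–Mazur swindle is needed), the Anderson trick \cite{Anderson1958} should write any brick-supported homeomorphism as a \emph{single} commutator of brick-supported elements: if $h$ is supported on brick $B$, choose a shift $s$ sending a sub-brick chain off to infinity inside $B$ and set $h = [h', s]$ where $h'$ copies $h$ across the shifted copies. \textbf{Step 3 (Diagonal Argument)} is essentially verbatim from \cite{RS2007, Rosendal2008, Mann2016, Mann2024}: using that $W^2$ is dense and the commutator structure, one finds a single ``good'' brick $\cB$ such that every homeomorphism supported on $\cB$ lies in $W^N$ for a uniform $N$. \textbf{Step 4 (Pigeonhole)} uses that any two bricks in $\Omega$ are interchanged by an element of $\Homeo(X,\{X_\alpha\})$: partitioning an infinite family of disjoint bricks and pigeonholing against the countably many translates of $W$ produces elements of $W^N$ carrying an arbitrary brick into $\cB$. \textbf{Step 5 (Wrapping Up)} assembles these: any $g \in G(\Omega)$ is a product of four brick-supported maps, each conjugated into $\cB$ by bounded-length elements of $W$, hence $g \in W^{N'}$ for a uniform $N'$; feeding this back through \Cref{lem:baire} upgrades density to the Steinhaus conclusion $G(\Omega) \subseteq W^{N''}$.

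The main obstacle I expect is Step~1 done \emph{uniformly in the coloring}: fragmenting $g$ as a product of brick-supported maps requires finding clopen sets that are simultaneously compatible with every $X_\alpha$ and across which the telescoping/shift structure genuinely restricts, and this is where stability must be used carefully — an unstable $X$ could have orbit-closures that degenerate so that no clopen neighborhood admits the required self-similar decomposition. Concretely, I would need \Cref{prop:stable}'s ``local telescoping'' characterization to guarantee that each point has arbitrarily small clopen neighborhoods $\Omega$ that are homeomorphic (respecting colors) to $\Omega$-minus-a-brick, and then a further argument that such $\Omega$ can be chosen to tile a neighborhood of the whole orbit. Once the telescoping neighborhoods and their shift maps are in hand with full color-compatibility, Steps 2–5 are routine adaptations of the cited literature; the genuinely new content of this section over \cite{Mann2024} is (a) handling arbitrary stable $X$ rather than a fixed model space, and (b) threading the coloring through every step so the result can be invoked in Section~\ref{sec:surfaces} for planar-versus-nonplanar ends.
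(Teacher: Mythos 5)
Your proposal follows essentially the same Five Step Program that the paper carries out in Section~\ref{sec:endspaces}: Baire category to get density in a neighborhood of the identity, partition $X$ into finitely many stable pieces, fragment into brick-supported maps, Anderson trick with a shift to write these as single commutators, diagonalize to a good brick, pigeonhole to conjugate into it. The high-level architecture and all five steps match the paper's, and your observation that the Stone-space shift yields a single commutator (avoiding the Eilenberg--Mazur swindle needed for surfaces) is exactly right.

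One misstep is worth flagging. You restrict the definition of ``telescoping'' to points whose $\Homeo(X)$-orbit is of Cantor type, importing the orbit dichotomy (\Cref{lem:Cantor-type}) from the surface section. That lemma is about surface end spaces; in the Stone-space setting the paper's notion of telescoping is simply stability, encoded combinatorially by the decomposition $\Omega\smallsetminus\{x\}=\bigsqcup_k Y_k$ of \Cref{prop:stable}(iii), with bricks being arbitrary infinite/co-infinite unions of the $Y_k$. Crucially, this is defined without any hypothesis on the orbit of $x$. If you literally require Cantor-type orbits, your covering of $X$ by telescoping neighborhoods would fail for exactly the spaces of greatest interest: every countable stable Stone space (such as $\omega^\alpha\cdot n+1$, covered by \Cref{cor:ctbleordinals}) and the paper's model example $\{1/n\}\cup\{0\}$ have discrete orbits, often singletons. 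The paper's argument handles these uniformly because the fragmentation step (\Cref{lem:fpfragment}) only does anything nontrivial when $h(x)\neq x$; when the orbit of $x$ is $\{x\}$, that step is vacuous, while the brick fragmentation (\Cref{lem:brickfragment}), Anderson trick (\Cref{lem:anderson}), diagonalization (\Cref{lem:diagonal,lem:diagonal2}) and pigeonhole (\Cref{lem:pigeonhole}) all go through unchanged because they only use the $Y_k$ structure and shift maps from \Cref{lem:shift}, never the orbit of $x$. So the right fix is to drop the Cantor-type hypothesis entirely and define bricks from \Cref{prop:stable}(iii) as the paper does. With that correction, your proposal is essentially the paper's proof; your concern about threading the coloring through every step is handled in the paper simply by building the color-preservation requirement into the definition of ``homeomorphism'' from the outset, so that \Cref{prop:stable}, \Cref{lem:shift}, and all subsequent lemmas are automatically color-aware.
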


By a neighborhood $U$ in $X$ we always mean a clopen neighborhood, that is a set that is both open and closed. 
We say a family of subsets $Y_k \subset X$ descends to $x$ and write $Y_k \searrow x$, if for every sequence $x_k \in Y_k$, we have $x_k \to x$.

\begin{PROP} \label{prop:stable}
Let $X$ be a second countable Stone space, $x$ be a point in $X$
and let $U$ be a neighborhood of $x$. Then the following are equivalent. 
\begin{enumerate}[(i)]
\item For every neighborhood $U' \subset U$ of $x$ there is a homeomorphism 
$\Phi : U \to U'$ fixing $x$. 
\item For every neighborhood $U' \subset U$ there is a neighborhood $U'' \subset U'$ of $x$
and a homeomorphism $\Phi : U \to U''$ fixing $x$. 
\item There is a decomposition 
\begin{equation}  \label{Eq:Decomposition-U}
U-\{x\} = \bigsqcup_{k=1}^\infty Y_k
\end{equation} 
where $Y_k$ are clopen, disjoint, $Y_k \searrow x$ and, for every $k \geq 1$, 
the set $Y_{k+1}$ contains a homeomorphic copy of $Y_k$. Furthermore, for every sequence of indices 
$k_n$, 
\begin{equation}  \label{Eq:sub-sequence} 
U - \{x\} \homeo \bigsqcup_{n=1}^\infty Y_{k_n},
\end{equation} 
\end{enumerate} 
\end{PROP}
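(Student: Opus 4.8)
The plan is to prove the cycle of implications $(i) \Rightarrow (ii) \Rightarrow (iii) \Rightarrow (i)$, with the bulk of the work in $(ii) \Rightarrow (iii)$ and $(iii) \Rightarrow (i)$. The implication $(i) \Rightarrow (ii)$ is immediate: given $U' \subset U$, apply $(i)$ to the neighborhood $U'$ itself, obtaining $\Phi : U \to U'$ fixing $x$; then since $\Phi(U')$ is a neighborhood of $x$ contained in $U'$, we may set $U'' = \Phi(U')$ and compose (or just note $\Phi : U \to U' \subset U'$ already witnesses a version of $(ii)$, though to get $U'' \subsetneq U'$ strictly we iterate once more or pass to a proper subneighborhood). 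The point of separating $(ii)$ from $(i)$ is that $(ii)$ is the a priori weaker ``eventually shrinking'' statement that is easier to verify in practice.

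For $(ii) \Rightarrow (iii)$, first I would build the sequence $Y_k$. Fix a countable neighborhood basis $U = V_0 \supset V_1 \supset V_2 \supset \cdots$ of $x$ with $\bigcap V_n = \{x\}$ (available since $X$ is second countable Stone). Using $(ii)$ repeatedly, construct an increasing sequence of indices and homeomorphisms: start with any proper clopen neighborhood, apply $(ii)$ to get $\Phi_1 : U \to U_1''$ with $U_1'' \subset V_1$, and set $Y_1 = U - \Phi_1(U) \cdot$; more precisely, define $W_1 = \Phi_1(U)$, a neighborhood of $x$ contained in $V_1$, and let $Y_1 = U - W_1$, which is clopen. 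Iterate: apply $(ii)$ inside $W_1$ — using that $W_1$ is homeomorphic to $U$ via $\Phi_1$, so $(ii)$ transports — to obtain $W_2 \subset W_1 \cap V_2$ a neighborhood of $x$, and set $Y_2 = W_1 - W_2$. Continuing, $U - \{x\} = \bigsqcup_{k \geq 1} Y_k$ with $Y_k = W_{k-1} - W_k$, the $Y_k$ are clopen and disjoint, and $Y_k \searrow x$ because $Y_k \subset W_{k-1} \subset V_{k-1}$ and the $V_n$ descend to $x$. The key structural feature to extract is that $\Phi_k$ restricts to a homeomorphism $W_{k-1} \to W_k$ fixing $x$, hence carries $Y_k = W_{k-1} - W_k$ homeomorphically onto $Y_{k+1} = W_k - W_{k+1}$ (after also arranging the nesting compatibly — this bookkeeping is where care is needed so that $\Phi_k(W_k) = W_{k+1}$). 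Thus $Y_{k+1}$ contains a homeomorphic copy of $Y_k$; in fact $Y_{k+1} \homeo Y_k$, which is stronger than needed. For the subsequence statement \eqref{Eq:sub-sequence}: given indices $k_1 < k_2 < \cdots$, note $\bigsqcup_n Y_{k_n}$ is a clopen punctured neighborhood shape, and one glues the homeomorphisms $Y_{k_n} \homeo Y_n$ (from iterating $\Phi$) together with the fact that the residual accumulation point is again $x$; the gluing is continuous at $x$ precisely because both sides descend to $x$.

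For $(iii) \Rightarrow (i)$, given an arbitrary neighborhood $U' \subset U$ of $x$, I want a homeomorphism $U \to U'$ fixing $x$. Since $U'$ is clopen and contains $x$, the set $U - U'$ is clopen and misses $x$, so it meets only finitely many... no — rather, $U - \{x\} = \bigsqcup_k Y_k$ and $U'$ being a neighborhood of $x$ means $Y_k \subset U'$ for all large $k$ (because $Y_k \searrow x$ forces $Y_k$ into any neighborhood of $x$ eventually), say for $k \geq M$. Then $U' - \{x\} = \bigsqcup_{k \geq M} Y_k \sqcup (\text{finitely many clopen pieces } Z_1, \dots, Z_r \text{ carved from } Y_1, \dots, Y_{M-1})$. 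The strategy is to use \eqref{Eq:sub-sequence}: $U - \{x\} = \bigsqcup_{k \geq 1} Y_k \homeo \bigsqcup_{n \geq 1} Y_{k_n}$ for a suitable sparse subsequence, leaving room to absorb the extra pieces $Z_j$. Concretely, choose the subsequence so that the ``gaps'' $Y_{k_n + 1}, \dots, Y_{k_{n+1} - 1}$ can be matched with the leftover finite pieces of $U'$, using $Y_{k+1} \supset$ a copy of $Y_k$ to nest things appropriately, and using that each $Y_k$ itself decomposes (being homeomorphic to a later $Y_j$, which by the decomposition has its own sub-$Y$'s) — so one can always split off a copy to realize any prescribed clopen piece. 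This matching argument, ensuring continuity at $x$ on both ends via the $\searrow x$ condition, is the main obstacle: it is essentially a ``Cantor-normal-form'' / back-and-forth bookkeeping showing $U - \{x\} \homeo U' - \{x\}$ by a homeomorphism extending to fix $x$. I expect the cleanest route is to first prove the self-similarity consequence that for any clopen $V$ with $x \in V \subset U$ one has $V - \{x\} \homeo U - \{x\}$ rel $x$, by sandwiching $V$ between two homeomorphic copies of $U$ (one inside $V$, one containing $V$) obtained from \eqref{Eq:sub-sequence} and invoking a Cantor–Schröder–Bernstein-type argument for these punctured Stone neighborhoods.
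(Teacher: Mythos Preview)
Your argument has a real gap in $(ii)\Rightarrow(iii)$, and the parenthetical ``this bookkeeping is where care is needed so that $\Phi_k(W_k)=W_{k+1}$'' is hiding it rather than resolving it. You want simultaneously that $W_{k+1}\subset V_{k+1}$ (so that $Y_k\searrow x$) and that $W_{k+1}=\Phi_k(W_k)$ (so that $\Phi_k$ carries $Y_k$ onto $Y_{k+1}$). These two constraints are incompatible in general: $\Phi_k(W_k)$ is just some clopen neighborhood of $x$ inside $W_k$, and you have no control forcing it into the prescribed basic neighborhood $V_{k+1}$. If you instead iterate a single $\Phi_1$ you get all $Y_k$ homeomorphic but lose descent to $x$; if you choose a fresh $\Phi_k$ at each stage to force descent, you lose $Y_k\homeo Y_{k+1}$. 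Consequently your proof of the subsequence statement \eqref{Eq:sub-sequence}, which rests on ``$Y_{k_n}\homeo Y_n$ from iterating $\Phi$'', collapses. The paper does not claim the $Y_k$ are pairwise homeomorphic. Instead it chooses the next index $p_{k+1}$ large enough that $\Phi_k(Y_k)$ lands inside $Y_{k+1}:=U_{p_k}-U_{p_{k+1}}$ as a proper clopen subset, records the leftover $V_{k+1}:=Y_{k+1}\smallsetminus\Phi_k(Y_k)$, and obtains inductively $Y_k\homeo V_1\sqcup\cdots\sqcup V_k$. The subsequence statement then follows by decomposing both $\bigsqcup_k Y_k$ and $\bigsqcup_n Y_{k_n}$ into countably many copies of each $V_j$ and matching them up; continuity at $x$ comes from $V_{n,j}\searrow x$ for each fixed $j$. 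This regrouping via the $V_j$'s is the missing idea.

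Your $(iii)\Rightarrow(i)$ is also off track. The suggested Cantor--Schr\"oder--Bernstein maneuver does not work for topological spaces, and the ``absorbing the $Z_j$'' sketch never becomes an argument. The paper does not attempt $(iii)\Rightarrow(i)$ directly: it first does $(iii)\Rightarrow(ii)$, which is immediate (for large $N$ the tail $\bigsqcup_{k\ge N}Y_k\subset U'$ is $\homeo U-\{x\}$ by \eqref{Eq:sub-sequence}), and then proves $(ii)\Rightarrow(i)$ by a Hilbert-hotel shift. Namely, set $W_1=U-U'$, and use $(ii)$ to push disjoint homeomorphic copies $W_2,W_3,\ldots$ of $W_1$ ever deeper toward $x$; the map that sends $W_k$ onto $W_{k+1}$ and is the identity elsewhere is the desired homeomorphism $U\to U'$ fixing $x$. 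This shift trick is what you are missing for the final implication.
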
 

\begin{proof} 
The implication $(i) \Longrightarrow (ii)$ is immediate. We start by proving $(ii) \Longrightarrow (iii)$.
Let 
\[
U = U_0 \supset U_1 \supset U_2 \supset \dots
\] 
be a nested sequence of clopen neighborhoods of $x$ such that 
$U_i \searrow x$ and there are homeomorphisms 
\[
\Psi_i: U_i \to U_{i+1}, \qquad i=0, 1, 2, \cdots
\] 
fixing $x$. This is possible by starting from a nested sequence that descends to $x$ and 
making them smaller to make sure they are all homeomorphic to $U$. 
We construct disjoint clopen sets $V_k$, $Y_k$ and an index sequence $p_k$ inductively as follows.  
Set $p_0 = 0$, $p_1 =1$ and set 
\[
Y_1=V_1 = U_0 - U_1.
\] 
Assume now that the index $p_k>0$ and sets $V_k, Y_k \subset U-\{ x\}$ are given. Let 
\[
\Phi_k : U_{p_{k-1}} \to U_{p_k}
\qquad\text{be the composition} \qquad
\Phi_k  \defeq\Psi_{(p_{k}-1)} \circ \dots \circ \Psi_{(p_{k-1}+1)} \circ \Psi_{p_{k-1}}.
\]
Then $\Phi_k (Y_k)$ is a closed subset of $U_{p_k}$ disjoint from $x$. Hence, 
we can choose an index $p_{k+1}>p_k$ large enough such that 
\[
\Phi_k(Y_k) \cap U_{p_{k+1}} = \emptyset.
\]
Then define 
\[
Y_{k+1} = U_{p_k}- U_{p_{k+1}} 
\qquad\text{and}\qquad
V_{k+1} = Y_{k+1} -\Phi_k(Y_k). 
\]
The sets $Y_k$ are clopen and disjoint. Since $Y_{k+1} \subset U_{p_k}$ and $U_i \searrow x$, 
we also have $Y_k \searrow x$. Also, 
\begin{equation} \label{Eq:W}
 U-\{x\} = \bigsqcup_{k=1}^\infty (U_{p_k} - U_{p_{k+1}}) = \bigsqcup_{k=1}^\infty Y_k.
\end{equation} 

To see the last two assertions of $(iii)$, note that $Y_{k+1}$ is homeomorphic to $Y_k \sqcup V_{k+1}$. 
By induction, this implies 
\[
Y_k \homeo \bigsqcup_{j=1}^k V_j. 
\] 
Now, let a sequence of indices $k_n$ be given and let 
$A= \bigsqcup_{n=1}^\infty Y_{k_n}$. Since a copy of $V_j$ appears in 
$Y_k$ for every $k \geq j$, we can break this further and write 
\[
A= \bigsqcup_{n=1}^\infty \bigsqcup_{j=1}^{k_n} V_{n, j} 
\]
where $V_{n,j}$ is a homeomorphic copy of $V_j$ in $Y_{k_n}$. Changing the order of unions, we have 
\[
A = \bigsqcup_{j=1}^\infty \bigsqcup_{n=1}^{\infty} V_{n, j}.  
\]
Similarly, using \eqref{Eq:W}, we have 
\[
U-\{x\}= \bigsqcup_{k=1}^\infty Y_k = \bigsqcup_{k=1}^\infty \bigsqcup_{j=1}^{k} V_{k, j} 
= \bigsqcup_{j=1}^\infty \bigsqcup_{k=1}^{\infty} V_{k, j},
\]
where $V_{k,j}$ is a homeomorphic copy of $V_j$. We can now construct a homeomorphism from $A$ to 
$U-\{x\}$ by sending $V_{k,j}$ homeomorphically to $V_{n,j}$. This map is continuous since, for every $j$, 
\[
V_{n,j}\searrow x \qquad\text{and}\qquad V_{k,j}\searrow x
\qquad\text{as $n \to \infty$}. 
\]
This finishes the proof of $(ii) \Longrightarrow (iii)$. 

To see $(iii) \Longrightarrow (ii)$, we observe that, for every $U' \subset U$, there is $N>0$ such that, 
$Y_k \subset U'$ for $k \geq N$. Otherwise, there is a point $x_k \in Y_k-U$. But $x_n$ does not limit 
to $x$ which contradicts the assumption that $Y_n \searrow x$. Hence, 
\[
U \homeo \bigsqcup_{k=N}^\infty Y_k \subset U'.
\]

It remains to show $(ii) \Longrightarrow (i)$. Let $U' \subset U$ be given.
Let $U=U_1 \supset U_2 \supset \dots$ and $\Psi_i : U_i \to U_{i+1}$ 
be as before (which exist after assuming $(ii)$). We construct disjoint clopen sets $W_k$ and an index sequence $q_k$ as follows.
Set $q_1 =1$ and set $W_1 = U - U'$. Assuming $q_k>0$ and $W_k \subset U-\{x\}$ are given,
let 
\[
\Phi_k' : U_{q_{k-1}} \to U_{q_k}
\qquad\text{be the composition} \qquad
\Phi_k'  \defeq\Psi_{(q_{k}-1)} \circ \dots \circ \Psi_{(q_{k-1}+1)} \circ \Psi_{q_{k-1}}
\]
and choose an index $q_{k+1} > q_k$  large enough such that 
\[
\Phi_k'(W_k) \cap U_{q_{k+1}} = \emptyset.
\]
Then define $W_{k+1} = \Phi_k(W_k)$. The sets $W_k$ are all homeomorphic, disjoint
and descend to $x$. We can now build a homeomorphism from $U \to U'$
by sending $W_k$ to $W_{k+1}$ and fixing everything else. This map is continuous
since $W_k \searrow x$. This finishes the proof. 
\end{proof} 

\begin{DEF}\cite[Definition 4.14]{MR2023}\label{stable}
	A neighborhood $U$ of a point $x \in X$ is \textbf{stable} if it satisfies the three equivalent conditions in \Cref{prop:stable}. We say that a point in $X$ is \textbf{stable} if it has a stable neighborhood. We say the space $X$ is \textbf{stable} if every point of $X$ is stable.
\end{DEF}

Property $(iii)$ of stability will play the role of a type of
``telescoping'' property for second countable Stone spaces.

Before proceeding with the proof of \Cref{thm:endsmain} we give some examples.

  \begin{example} Here are two examples of stable Stone spaces.
  \begin{enumerate}[(1)]
  \item The Cantor set is stable and thus its homeomorphism
    group has
    AC. Our theorem thus recovers the result of \cite{RS2007}. 
    
  \item Every countable compact Stone space is stable and
    hence has automatic continuity. This follows from the classification theorem that realizes compact Stone spaces as countable ordinals \cite{MS1920}. 
    \end{enumerate}
  \end{example}

  \begin{remark}
  Adding a coloring to the space may change whether the space is
  stable. For example, let $X=C\times D$ where $C$ is the Cantor set
  and $D=\{0\}\cup\{\frac 1n\mid n\in\Z_{n\geq 1}\}$. Then this space
  is stable and hence $\Homeo(X)$ has the automatic continuity
  property. However, if we now color each $C\times\{d\}$ with its own
  color for $d\in D$, the space is no longer stable and we do not know
  whether the color-preserving homeomorphism group has automatic
  continuity. This is an end space analog of the surface example given in \Cref{ssec:unknownex}.
  \end{remark}
  
Unless otherwise stated, throughout the following we assume that $X$
is a second countable Stone space, $x\in X$, and $\Omega\subset X$ is
a stable neighborhood of $x$.

We have a decomposition 
\begin{equation} \label{Eq:Decomposition}
\O \setminus \{x\} = \bigsqcup_{k=1}^{\infty} Y_{k}
\end{equation} 
as given by \Cref{prop:stable}. 

Denote 
\[
Y[n]:=\bigsqcup_{i=1}^n Y_i, 
\qquad\text{and more generally}\qquad 
Y[n,m]=\bigsqcup_{i=n}^m Y_i.
\]

\begin{PROP}\label{prop:axiomstar}
  For every $1\leq n<m$ and every embedding
  $\phi:Y[n]\to Y[m]$ onto a clopen subset, there is some $p>m$ and a
  homeomorphism $h:Y[p]\to Y[p]$ that agrees with $\phi$ on $Y[n]$.
\end{PROP}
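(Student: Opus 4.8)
The plan is to reduce the statement to a manipulation of clopen disjoint unions and then to glue two homeomorphisms along a clopen partition of $Y[p]$. Set $B:=\phi(Y[n])$, a clopen subset of $Y[m]$, and $C:=Y[m]\setminus B$, which is again clopen. For any $p>m$ we have the clopen decompositions
\[
Y[p]=Y[n]\sqcup Y[n+1,p]
\qquad\text{and}\qquad
Y[p]=B\sqcup\bigl(C\sqcup Y[m+1,p]\bigr).
\]
So it is enough to find $p>m$ together with a (color-preserving) homeomorphism $\psi:Y[n+1,p]\to C\sqcup Y[m+1,p]$: then $h:=\phi$ on $Y[n]$ and $h:=\psi$ on $Y[n+1,p]$ is a bijection of $Y[p]$ which is continuous with continuous inverse, because it is continuous on each of two clopen pieces (on the target side with respect to the clopen partition $B,\,Y[p]\setminus B$), and it agrees with $\phi$ on $Y[n]$.

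\textbf{Choosing $p$ and peeling a copy of $Y[n]$ off the tail.} Take $p=m+n$. I claim the tail $Y[m+1,p]=Y_{m+1}\sqcup\cdots\sqcup Y_{m+n}$ contains a clopen copy of $Y[n]$. By \Cref{prop:stable}(iii) (and its proof, where one has $Y_{k+1}\cong Y_k\sqcup V_{k+1}$), each $Y_{k+1}$ contains a \emph{clopen} copy of $Y_k$; iterating, $Y_{m+i}$ contains a clopen copy of $Y_i$ for each $i=1,\dots,n$. Since $Y_{m+1},\dots,Y_{m+n}$ are pairwise disjoint, the union of these copies is a clopen copy of $Y_1\sqcup\cdots\sqcup Y_n=Y[n]$ inside $Y[m+1,p]$. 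Write $Y[m+1,p]=Z\sqcup R$ with $Z\cong Y[n]$ and $R$ clopen.

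\textbf{Routing both remainders through $Y[m]\sqcup R$.} Using $Y[n+1,p]=Y[n+1,m]\sqcup Y[m+1,p]$, the splitting $Y[m+1,p]=Z\sqcup R$, the homeomorphism $Z\cong Y[n]$, and the fact that $Y[n]$ and $Y[n+1,m]$ are complementary clopen subsets of $Y[m]$,
\[
Y[n+1,p]=Y[n+1,m]\sqcup Z\sqcup R\;\cong\;Y[n+1,m]\sqcup Y[n]\sqcup R\;\cong\;Y[m]\sqcup R.
\]
On the other hand, since $\phi:Y[n]\to B$ is a color-preserving homeomorphism and $B,C$ are complementary clopen subsets of $Y[m]$,
\[
C\sqcup Y[m+1,p]=C\sqcup Z\sqcup R\;\cong\;C\sqcup Y[n]\sqcup R\;\cong\;C\sqcup B\sqcup R\;=\;Y[m]\sqcup R,
\]
the middle homeomorphism being $\mathrm{id}_C\sqcup\phi\sqcup\mathrm{id}_R$. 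Composing the two chains yields the desired $\psi:Y[n+1,p]\to C\sqcup Y[m+1,p]$; every homeomorphism used respects the coloring, so $\psi$ and hence $h$ do as well, completing the construction.

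\textbf{Where the work is.} Everything except one observation is routine bookkeeping with clopen sets. The one idea is in the third paragraph: there is no reason for $Y[n+1,p]$ and $Y[p]\setminus\phi(Y[n])$ to be homeomorphic ``on the nose,'' but extracting a clopen copy of $Y[n]$ out of the tail $Y[m+1,p]$ lets one exhibit both of them as $Y[m]\sqcup R$. The only technical points to keep an eye on are that the copies of $Y_k$ supplied by \Cref{prop:stable}(iii) are genuinely clopen (which is how they arise in its proof) and color-preserving, so that all of the disjoint-union manipulations above are legitimate.
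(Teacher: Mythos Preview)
Your proof is correct and follows essentially the same approach as the paper: the paper packages the key step as a separate lemma (given $\phi:A\hookrightarrow A\sqcup B$ clopen, there is a self-homeomorphism of $A\sqcup B\sqcup A$ extending $\phi$, because both $B\sqcup A$ and $C\sqcup A$ are homeomorphic to $A\sqcup B$), which is exactly your ``routing through $Y[m]\sqcup R$'' argument inlined. The only cosmetic difference is that the paper takes $p=2m$ while you take $p=m+n$; both rely on the same clopen-copy property of the $Y_k$ coming from the proof of \Cref{prop:stable}.
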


The proof is based on the following lemma.

\begin{lemma}\label{lem:puzzle}
  Let $A,B$ be topological spaces and let $\phi:A\to A\sqcup B$ be an embedding onto a
  clopen subset of $A \sqcup B$. Then there is a self-homeomorphism
  $\tilde\phi$ 
  of $A\sqcup B\sqcup A$ that restricts on (the first) $A$ to
  $\phi$.
\end{lemma}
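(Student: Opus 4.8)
\textbf{Proof plan for \Cref{lem:puzzle}.}

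The plan is to run an Eilenberg--Mazur swindle (a ``Hilbert hotel'' argument) inside the space $A \sqcup B \sqcup A$. The key observation is that the embedding $\phi : A \to A \sqcup B$ produces, by iteration, infinitely many disjoint clopen copies of $B$ sitting inside a single copy of $A$, together with a ``limit'' copy of $A$. Concretely, write $A' := \phi(A) \subseteq A \sqcup B$, so that $A \sqcup B = A' \sqcup C$ where $C := (A \sqcup B) \setminus A'$ is clopen. Pushing forward by $\phi$ repeatedly, set $A_0 := A$, $A_{n+1} := \phi(A_n)$ and $C_{n+1} := \phi(C_n)$ (with $C_1 := C$); then $A_n = A_{n+1} \sqcup C_{n+1}$ for all $n \geq 0$, the sets $C_1, C_2, \dots$ are pairwise disjoint clopen subsets of $A$, each homeomorphic to $C$ via an iterate of $\phi$, and $A \cong \left(\bigsqcup_{n \geq 1} C_n\right) \sqcup A_\infty$ where $A_\infty := \bigcap_n A_n$. (One should note $A_\infty$ may be empty; the argument does not care.)

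Next I would set up the matching target decomposition. In $A \sqcup B \sqcup A$, label the three pieces so that we have a ``spare'' copy of $A$; call it $A^{\mathrm{sp}}$, and call the first copy simply $A$ and the middle piece $B$. The point is that $A \sqcup B \cong A_0 = \left(\bigsqcup_{n \geq 1} C_n\right) \sqcup A_\infty$ contains, via $\phi$ conjugation, a clopen copy of $B$ exactly matching a ``left-over'' region — more precisely $C \cong B \sqcup (\text{clopen piece of } A)$ is not automatic, so instead I would argue as follows. We have the identity of spaces
\[
(A \sqcup B \sqcup A) \;=\; (A \sqcup B) \sqcup A^{\mathrm{sp}} \;\cong\; A^{\mathrm{sp}} \sqcup (A \sqcup B),
\]
and we want a homeomorphism $\tilde\phi$ of the left side whose restriction to the first $A$ is $\phi : A \to A \sqcup B$. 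On the first $A$ we are forced to use $\phi$, landing in the middle-plus-first blocks $A \sqcup B$. What remains of the domain is $B \sqcup A^{\mathrm{sp}}$, and what remains of the target is $C \sqcup A^{\mathrm{sp}}$ (where $C = (A\sqcup B) \setminus \phi(A)$). So it suffices to produce a homeomorphism $B \sqcup A^{\mathrm{sp}} \to C \sqcup A^{\mathrm{sp}}$, i.e., to absorb the difference $B \leftrightarrow C$ into the spare copy $A^{\mathrm{sp}} \cong A$. Now use the telescoping identity from the previous paragraph: inside $A^{\mathrm{sp}} \cong A$ we have $A \cong C \sqcup \big(\bigsqcup_{n\geq 2} C_n\big) \sqcup A_\infty \cong C \sqcup A$ (re-indexing the $C_n$'s, since $A \cong \bigsqcup_{n \geq 1} C_n \sqcup A_\infty$ and dropping one copy of $C \cong C_1$ still leaves a space homeomorphic to $A$ after shifting). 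Hence $B \sqcup A^{\mathrm{sp}} \cong B \sqcup C \sqcup A$ and $C \sqcup A^{\mathrm{sp}} \cong C \sqcup C \sqcup A$; but wait — this still has an asymmetric $B$ versus $C$. The fix is to also swindle on the $B$ side: since we are free to build the homeomorphism $B \sqcup A^{\mathrm{sp}} \to C \sqcup A^{\mathrm{sp}}$ any way we like (no constraint here), and since $A^{\mathrm{sp}} \cong A \cong \bigsqcup_{n \geq 1} C_n \sqcup A_\infty$ absorbs one more copy of $C$ freely, I would instead directly match: send $B \sqcup A^{\mathrm{sp}}$ to $C \sqcup A^{\mathrm{sp}}$ by sending $B$ to the ``new'' first $C_1 \subseteq A^{\mathrm{sp}}$, and then shifting $C_n \mapsto C_{n+1}$ inside $A^{\mathrm{sp}}$ to free up room, with $A_\infty \to A_\infty$ and the external $C$-block mapped identically. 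That is the standard shift.

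To make this clean I would actually reformulate and prove a small sublemma first: \emph{if $A \cong A \sqcup B$ (witnessed by a clopen embedding $\phi$), then for any space $D$, $A \sqcup D \cong A \sqcup B \sqcup D$ via a homeomorphism restricting to $\phi$ on $A$ after one shift} — but more useful is: \emph{$A$ itself satisfies $A \cong B \sqcup A$, again via an explicit shift using the $C_n$ decomposition.} Granting $A \cong B \sqcup A$, the lemma is immediate: $A \sqcup B \sqcup A$ — use $\phi$ on the first $A$ to reach $A \sqcup B$; the residual domain $B \sqcup A$ maps to the residual target $C \sqcup A$; and since $A \cong B' \sqcup A$ for \emph{any} clopen piece $B'$ occurring inside $A$ (in particular $A \cong C \sqcup A$ because $C \cong C_1 \subseteq A$, and also $B \sqcup A \cong A$ because... here is the one genuine subtlety), we get $B \sqcup A \cong A \cong C \sqcup A$ provided $B$ also embeds as a clopen subset of $A$ — which it does, because $B \cong C \subseteq A$ is false in general, BUT $B$ does embed clopen in $A\sqcup B$ and hence, chasing through $\phi$, $B \cong \phi^n(B) \subseteq A_n \subseteq A$ for each $n \geq 1$. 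So $B$ is clopen in $A$, $C$ is clopen in $A$, and the swindle $A \cong (\text{moiety of disjoint copies}) \sqcup A_\infty$ lets us absorb either one. Thus $B \sqcup A \cong C \sqcup A$ and $\tilde\phi$ is assembled by gluing the two clopen pieces.

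\textbf{Main obstacle.} The only real subtlety — and the step I expect to need the most care — is the claim that $B$ embeds as a \emph{clopen} subset of $A$. This needs the embedding $\phi: A \to A \sqcup B$ to be onto a clopen subset (which is hypothesized): then $\phi(A)$ is clopen, its complement $C$ is clopen, and $\phi(C) \subseteq \phi(A) \subseteq A$ is clopen in $A$; iterating, $\phi^n(B)$-type pieces are clopen in $A$. Once one has that $B$ (up to homeomorphism) and $C$ are both clopen subsets of $A$, and that $A$ admits a decomposition $A \cong \bigsqcup_{n\geq 1} C_n \sqcup A_\infty$ into infinitely many clopen copies of $C$ plus a remainder, the Hilbert-hotel shifts make everything fit and continuity is automatic since all pieces are clopen. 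I would write the final homeomorphism $\tilde\phi$ explicitly as a piecewise map on finitely many clopen regions plus one countable shift, and check agreement with $\phi$ on the first $A$ by construction.
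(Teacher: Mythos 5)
Your reduction — that it suffices to produce a homeomorphism $B \sqcup A \to C \sqcup A$, where $C = (A\sqcup B)\setminus\phi(A)$, and glue it to $\phi$ on the first $A$ — is exactly the paper's reduction. But the swindle you run to justify the existence of that homeomorphism does not get off the ground. The iterative decomposition $A_{n+1} := \phi(A_n)$, $C_{n+1} := \phi(C_n)$ is not well-defined: $\phi$ is a map $A \to A \sqcup B$, while $\phi(A)$ and $C$ are subsets of $A \sqcup B$ and in general not subsets of $A$, so $\phi(\phi(A))$ and $\phi(C)$ make no sense. (This is already visible in the application, \Cref{prop:axiomstar}, where $\phi$ sends $Y[n]$ into $Y[m]$ with $m>n$, not into $Y[n]$.) Consequently the claims that $C_1, C_2,\dots$ are clopen subsets of $A$, that $A \cong \bigsqcup_n C_n \sqcup A_\infty$, and that $B \cong \phi^n(B) \subseteq A_n \subseteq A$ all fail to parse. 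The statement you single out as ``the one genuine subtlety'' — that $B$ embeds as a clopen subset of $A$ — is in fact false in general: take $A$ a countable discrete space, $B$ a Cantor set, and $\phi$ the inclusion $A \hookrightarrow A\sqcup B$; then $C = B$ and no copy of $B$ sits clopen in $A$. Likewise $A \cong B \sqcup A$ and $A \cong C \sqcup A$ fail already for $A$, $B$ finite nonempty, even though clopen embeddings $A \to A\sqcup B$ certainly exist there.

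None of this machinery is needed, because $B \sqcup A \cong C \sqcup A$ follows directly from the hypothesis. As a topological space, $\phi(A) \sqcup C$ \emph{is} $A \sqcup B$ — it is the same underlying set and topology, merely repartitioned into two clopen pieces — and $\phi(A) \cong A$ since $\phi$ is an embedding. Hence
\[
B \sqcup A \;\cong\; A \sqcup B \;=\; \phi(A) \sqcup C \;\cong\; A \sqcup C \;\cong\; C \sqcup A,
\]
and gluing any homeomorphism witnessing this chain to $\phi$ on the first $A$ yields $\tilde\phi$. This lemma is not an Eilenberg--Mazur swindle; the actual swindle in the paper appears later, in \Cref{lem:EMfragment}, where the geometry genuinely requires it.
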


\begin{proof}
Since $\phi(A)$ is clopen in $A\sqcup B$,  we have $A \sqcup B \homeo \phi(A)\sqcup C$. We
  define $$\tilde\phi:A\sqcup B\sqcup A\to \phi(A)\sqcup
  C\sqcup A$$ to be $\phi$ on $A$ and a homeomorphism from $B\sqcup
  A$ to $C\sqcup A$.
\end{proof}

\begin{proof}[Proof of Proposition \ref{prop:axiomstar}]
  Take $A=Y[n]$, $B=Y[n+1,m]$. Then $Y[m+1,2m]$ contains a clopen copy
  of $A \sqcup B$. Now
  apply the lemma and extend the resulting homeomorphism by the
  identity to all of $Y[2m]$.
\end{proof}

In this setting, we say that a \textbf{brick}, $\cB$, is an infinite union of the $Y_{k}$ so that the complement in $X \setminus \{x\}$ is also an infinite union of $Y_{j}$. A \textbf{subbrick}, $\cC \subset \cB$, of a brick $\cB$ is a brick $\cC$ such that $\cB \setminus \cC$ is also a brick. Note that the furthermore statement of property $(iii)$ from \Cref{prop:stable} implies that all bricks are homeomorphic. We will need the following proposition on the existence of shift maps on bricks. 

\begin{LEM}\label{lem:shift}
  If $\O$ is a stable neighborhood with respect to $x$, the $Y_{j}$ are as in \eqref{Eq:Decomposition} and $\cB \subset \O$ is a brick, then
  there is a homeomorphism $\sigma:\O\to
  \O$ fixing $x$ such that the collection of $\sigma^i(\cB)$ are
  pairwise disjoint bricks for $i\in\Z$. Furthermore, the $\sigma$--orbit of every point in $\O - \{x\}$ accumulates to $x$. 
\end{LEM}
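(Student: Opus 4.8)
The plan is to build $\sigma$ as an infinite product of ``swap'' homeomorphisms, exactly in the spirit of the shift construction for moieties, but carried out with the building blocks $Y_k$ so as to control accumulation at $x$. First I would record the two structural facts we have available: by \Cref{prop:stable}(iii), $\O\setminus\{x\}=\bigsqcup_{k\geq 1}Y_k$ with $Y_k\searrow x$, every $Y_{k+1}$ contains a homeomorphic copy of $Y_k$, and for any index sequence $\bigsqcup_n Y_{k_n}\homeo \O\setminus\{x\}$; and by hypothesis $\cB$ is a union of some of the $Y_k$ whose complement (within $\O\setminus\{x\}$) is also such a union. Write $\cB=\bigsqcup_{k\in S}Y_k$ and $\cB^c=\bigsqcup_{k\in S^c}Y_k$ with $S,S^c$ both infinite.

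Next I would reorganize $\O\setminus\{x\}$ into countably many disjoint copies of $\cB$. Using the furthermore-clause of \Cref{prop:stable}(iii) repeatedly (or, cleanly, Proposition~\ref{prop:axiomstar}-style surgery), partition the index set of the $Y_k$'s into infinitely many infinite blocks $S^{(0)},S^{(1)},S^{(2)},\dots$, each of which indexes a sub-union homeomorphic to $\cB$; one must choose the blocks so that every $Y_k$ lies in some $S^{(i)}$ and so that each $S^{(i)}$ still ``descends to $x$'' in the sense that the corresponding pieces $\searrow x$. This is possible because one can, e.g., interleave: send the $n$-th smallest element of each block off to larger and larger indices, which forces $Y_k\searrow x$ within every block. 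Label the resulting pieces $\cB_i\homeo\cB$ for $i\geq 0$, with $\bigsqcup_{i\geq 0}\cB_i=\O\setminus\{x\}$ and $\cB_0=\cB$. Now fix homeomorphisms $\theta_i:\cB_i\to\cB_{i+1}$ for $i\geq 0$, and also a homeomorphism $\tau_{-}:\cB_0\to\cB_0\sqcup(\text{nothing})$—more precisely, to run the shift over all of $\Z$ one instead groups the $\cB_i$ into the configuration $\dots\mid\cB_{-1}\mid\cB_0\mid\cB_1\mid\dots$ by first splitting $\cB$ itself into two subbricks and recursively, but it is simpler to note that we only need $\sigma$ with $\{\sigma^i(\cB)\}_{i\in\Z}$ pairwise disjoint, not exhausting $\O$: so take $\cB'=\cB_0\sqcup\cB_1$, realize inside it a bi-infinite disjoint family of copies of $\cB$ (again via (iii)), define $\sigma$ on $\cB_0$ to shift this family by one, and extend $\sigma$ by the identity on $\O\setminus\cB'$. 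Define $\sigma(x)=x$.

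Then I would check the three required properties. Continuity of $\sigma$ at every point of $\O\setminus\{x\}$ is automatic since that set is clopen and $\sigma$ there is a homeomorphism between clopen pieces; continuity at $x$ (and that $\sigma$ is a bijection fixing $x$, with continuous inverse by the symmetric construction) follows from the accumulation property, which is the real content. That $\{\sigma^i(\cB)\}_{i\in\Z}$ are pairwise disjoint bricks is by construction—they are disjoint unions of some $Y_k$'s (after transporting through the fixed homeomorphisms, one re-expresses each as such a union), with infinite complement. Finally, the $\sigma$-orbit of any $y\in\O\setminus\{x\}$ lands, for $|i|$ large, in pieces that are tails of the $Y_k$-decomposition, and since each $Y_k\searrow x$ and the construction was arranged so the relevant pieces shrink toward $x$ as $|i|\to\infty$, we get $\sigma^i(y)\to x$.

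The main obstacle is Step two: arranging the bi-infinite family of disjoint copies of $\cB$ inside a single brick so that \emph{the orbit of every point} accumulates to $x$. It is easy to get the copies disjoint and easy to get each copy homeomorphic to $\cB$; the subtlety is the uniformity in the accumulation statement—one needs that for a fixed point $y$, as $i\to\pm\infty$, the sets containing $\sigma^i(y)$ are eventually contained in arbitrarily small neighborhoods of $x$. I would handle this by using the $Y_k\searrow x$ structure as the ``ruler'': set up the copies of $\cB$ so that the $i$-th copy is built from $Y_k$'s with $k\geq f(|i|)$ for some $f\to\infty$, which makes $\sigma^i(y)$ lie in $\bigsqcup_{k\geq f(|i|)}Y_k$, a neighborhood basis of $x$. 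The same bookkeeping simultaneously delivers continuity of $\sigma$ and $\sigma^{-1}$ at $x$, so once this combinatorial arrangement is pinned down the rest is routine.
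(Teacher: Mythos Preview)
Your construction has a genuine gap. When you ``extend $\sigma$ by the identity on $\O\setminus\cB'$'', you lose the furthermore clause: any $y\in\cB_2\cup\cB_3\cup\cdots\subset\O\setminus\cB'$ is a fixed point of $\sigma$, so its orbit is $\{y\}$ and certainly does not accumulate to $x$. Your later assertion that ``the $\sigma$-orbit of any $y\in\O\setminus\{x\}$ lands, for $|i|$ large, in pieces that are tails'' is therefore false for such $y$. A secondary issue: with arbitrary transport homeomorphisms, $\sigma^i(\cB)$ need not be a union of the original $Y_k$'s, so it is not a brick in the sense defined just before the lemma.

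The approach you mention in passing and then abandon --- partitioning $\O\setminus\{x\}$ into a bi-infinite family of bricks --- is exactly what the paper does, and very cheaply: pick any bijection $\N\to\Z^2$ taking the index set $S$ of $\cB$ onto $\Z\times\{0\}$ (possible since $S$ and $S^c$ are both countably infinite), so that $\O\setminus\{x\}=\bigsqcup_{\kappa\in\Z^2}Y_\kappa$ with $\cB=\bigsqcup_{a\in\Z}Y_{(a,0)}$. Each row $\cB_m:=\bigsqcup_{a\in\Z}Y_{(a,m)}$ is a brick, and all bricks are homeomorphic by \Cref{prop:stable}(iii), so one may choose $\sigma$ sending $\cB_m$ homeomorphically onto $\cB_{m+1}$ for every $m\in\Z$. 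Then $\sigma^i(\cB)=\cB_i$ are pairwise disjoint bricks which \emph{partition} $\O\setminus\{x\}$. Both continuity at $x$ and the accumulation property fall out of $Y_k\searrow x$ with no further bookkeeping: any clopen neighborhood $U$ of $x$ has $\O\setminus U$ meeting only finitely many $Y_\kappa$, hence only finitely many rows $\cB_m$; so for $y\in\cB_{m_0}$ one has $\sigma^i(y)\in\cB_{m_0+i}\subset U$ for all but finitely many $i\in\Z$. The careful ``$k\geq f(|i|)$'' scaffolding you describe is unnecessary once the rows exhaust $\O\setminus\{x\}$.
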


\begin{proof}
  Using a bijection between $\N$ and $\Z^2$, write
  \[
  \O\smallsetminus\{x\}=\bigsqcup_{\kappa\in\Z^2}Y_\kappa
  \qquad\text{so that}\qquad  
  \cB=\bigsqcup_{\kappa\in\Z\times\{0\}}Y_\kappa.
  \] 
  Note that, for every $m \in Z$, $\bigsqcup_{\kappa\in\Z\times\{m\}}Y_\kappa$ is a brick.
  Since all bricks are homeomorphic, there exists a homeomorphism $\sigma : \O \to \O$ so that $\sigma$ sends 
  \[
  \bigsqcup_{\kappa\in\Z\times\{m\}}Y_\kappa
  \qquad\text{homeomorphically to}\qquad
  \bigsqcup_{\kappa \in\Z\times\{m+1\}}Y_\kappa
  \] 
  for all $m\in\Z$. 
\end{proof}

We now start the five step program with the assumption that $\O$ is a
stable neighborhood of the point $x \in \O$.

\begin{assumptions} We first set up some notation and standing assumptions. Unless explicitly stated, these will hold until the end of \Cref{sec:endspaces}.
\begin{enumerate}[(1)]
\item Denote by $G(\O)$ the
subgroup of $G=\Homeo(X)$ consisting of homeomorphisms supported on
$\O$; thus $G(\O)$ can be naturally identified with $\Homeo(\O)$.  
\item Fix the decomposition $\Omega \setminus \{x\} = \bigsqcup_{k=1}^{\infty}Y_{k}$ from \eqref{Eq:Decomposition}. 
\item Assume that $W <G$ is a symmetric set so that countably many translates cover $G$ and
$W^{2}$ is dense in $G(\O)$ (i.e. $\overline{W^2}\supset G(\O)$). 
\item For any brick, $\cB$, we write $G(\cB)$ for the closed subgroup of
homeomorphisms of $\O$ or $X$ supported on $\cB \cup
\{x\}$.
  \end{enumerate} 
\end{assumptions}

\subsection{Step 1: Fragmentation}

We first fragment an element of $G(\O)$ into two maps that have fixed points in the orbit of $x$. 

\begin{lemma}\label{lem:fpfragment}
	Any $h \in G(\O)$ can be written as $h=h_{1}h_{2}$ so that
        each $h_{i}$ belongs to $G(\O)$ and fixes some point in the
        orbit $G(\O) \cdot x$. 
\end{lemma}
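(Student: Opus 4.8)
The plan is to reduce the statement to producing, for the point $x_1 := h(x)$, an auxiliary element $\tau \in G(\O)$ with $\tau(x) = x_1$ that \emph{also} fixes some third point of the orbit $\mathcal O := G(\O)\cdot x$. Granting such a $\tau$, set $h_2 := \tau$ and $h_1 := h\tau^{-1}$; then $h = h_1 h_2$, $h_2$ fixes a point of $\mathcal O$ by construction, and $h_1$ fixes $x_1 = h(\tau^{-1}(x_1)) = h(x)$, which lies in $\mathcal O$. So both factors are in $G(\O)$ and fix a point of $\mathcal O$.

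First dispose of the trivial case $h(x) = x$ by taking $h_1 = h$, $h_2 = \id$. So assume $x_1 := h(x) \neq x$; then $x, x_1 \in \mathcal O$, so $|\mathcal O| \geq 2$. The key step is to upgrade this to $|\mathcal O| = \infty$. Fix a brick $\cB \subset \O$ and let $\sigma : \O \to \O$ be the shift map provided by \Cref{lem:shift}; it fixes $x$ and is color-preserving, hence lies in $G(\O)$, and the $\sigma$-orbit of every point of $\O \setminus \{x\}$ accumulates to $x$. Applying this to any $z \in \mathcal O \setminus \{x\}$: its $\sigma$-orbit lies in $\mathcal O$ (as $\sigma \in G(\O)$) and, since it accumulates to $x$, which is not itself in the orbit, the orbit is infinite. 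Hence $\mathcal O$ is infinite and we may pick $z \in \mathcal O \setminus \{x, x_1\}$.

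It remains to construct $\tau$. Using that $X$ is zero-dimensional, choose a clopen neighborhood of $x$ contained in $\O$ and missing $z$ and $x_1$; by stability of $\O$ at $x$ (condition $(i)$ of \Cref{prop:stable}) it contains a clopen neighborhood $N \ni x$ together with a color-preserving homeomorphism $\O \to N$ fixing $x$. Since $x_1 \in \mathcal O$, conjugating the stability of $\O$ at $x$ by an element of $G(\O)$ carrying $x$ to $x_1$ shows $\O$ is also stable at $x_1$; so, choosing a clopen neighborhood of $x_1$ disjoint from $N$ and missing $z$ and $x$, we obtain a clopen $N_1 \ni x_1$ and a color-preserving homeomorphism $\O \to N_1$ fixing $x_1$. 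Composing these gives a color-preserving homeomorphism $\theta : N \to N_1$ with $\theta(x) = x_1$. Let $\tau$ be $\theta$ on $N$, $\theta^{-1}$ on $N_1$, and the identity on $\O \setminus (N \cup N_1)$; as $N$ and $N_1$ are disjoint clopen subsets of $\O$, this is a color-preserving self-homeomorphism of $\O$ supported on $N \cup N_1$, so $\tau \in G(\O)$, with $\tau(x) = x_1$ and $\tau(z) = z$. Taking $h_2 = \tau$ and $h_1 = h\tau^{-1}$ finishes the proof.

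The main obstacle is the second step: there is genuinely no such factorization when $\mathcal O$ has exactly two points (one checks directly that every attempted decomposition fails), so one really must know that $\mathcal O$ is infinite as soon as it has at least two points. This is precisely where stability is used, through the existence of the shift map of \Cref{lem:shift} and the accumulation of its orbits at $x$. A minor technical point is that an arbitrary small clopen neighborhood of $x$ need not itself be stable, so to arrange $N \cong \O$ one should invoke condition $(i)$ of \Cref{prop:stable} directly rather than merely shrinking a neighborhood.
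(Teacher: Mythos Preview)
Your overall strategy is sound and genuinely different from the paper's. The paper works directly with the fixed decomposition $\O\setminus\{x\}=\bigsqcup_k Y_k$: locating $h^{-1}(x)\in Y_k$, using \Cref{lem:shift} (as you do) to find some $Y_j$ with $j\neq k$ meeting the orbit, and then invoking \Cref{prop:axiomstar} to manufacture an $h_1$ supported on finitely many $Y_s$ that agrees with $h$ on $Y_j$; then $h_1$ fixes $x$ and $h_2=h_1^{-1}h$ is the identity on all of $Y_j$. You instead bypass \Cref{prop:axiomstar} by building an explicit involution $\tau$ that swaps small clopen neighborhoods of $x$ and $x_1=h(x)$ while fixing a third orbit point. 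This is a clean and portable idea; the paper's route has the mild advantage that its $h_1$ is supported away from $x$, but that is not needed here.

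There is, however, one slip in your construction of $\theta$. ``Composing these'' does not yield $\theta(x)=x_1$: if $\phi:\O\to N$ fixes $x$ and $\psi:\O\to N_1$ fixes $x_1$, then $\psi\circ\phi^{-1}:N\to N_1$ sends $x$ to $\psi(x)$, and nothing forces $\psi(x)=x_1$. The repair is immediate with the ingredient you already introduced: you have $g\in G(\O)$ carrying $x$ to $x_1$ (used to transport stability), so set $\theta=\psi\circ g\circ\phi^{-1}$, which sends $x\mapsto x\mapsto x_1\mapsto x_1$. Even more directly, you can drop $\phi$ and $\psi$ altogether: choose a clopen $N\ni x$ small enough that $N$, $g(N)$, and $\{z\}$ are pairwise disjoint, and take $N_1=g(N)$, $\theta=g|_N$. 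With this correction the rest of your argument goes through verbatim.
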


\begin{proof}    
    Suppose $h(x)\neq x$ and assume $h^{-1}(x)\in Y_k$ for some
    $k$. By \Cref{lem:shift}, the orbit of $x$
    accumulates on $x$. Hence, there exists $j\neq k$ so that $Y_j$
    intersects $G(\O)\cdot x$.  Note that $h(Y_j)$ is disjoint from a
    neighborhood of $x$, so it is contained as a clopen subset of
    some finite union $Y_{k_1}\cup\cdots\cup Y_{k_r}$. Using
    \Cref{prop:axiomstar} we can find a homeomorphism $h_1$ of
    $X$ supported on only finitely many $Y_s$ and agreeing with $h$ on
    $Y_j$ (to apply \Cref{prop:axiomstar} first extend
    $h|_{Y_j}$ to $\bigsqcup_{i\leq j}Y_i$ by sending $\bigsqcup_{i< j}Y_i$
    arbitrarily to a clopen set disjoint from $h(Y_j)$). Thus
    $h=h_1h_2$ where $h_1$ fixes $x$ and $h_2=h_1^{-1}h$ is identity
    on $Y_j$ and so fixes a point in $G\cdot x$.
\end{proof}

Note that $\O$ is a stable neighborhood with respect to any point in the
    $G(\O)$-orbit of $x$. 
Thus the
previous lemma allows us to reduce our problem to only considering
maps in $\Homeo(\O,x)$. Next we fragment again, this time into
bricks.

\begin{lemma}\label{lem:brickfragment}
  Any $f\in \Homeo(\O,x)$ can be written as $f=g h$ so that both
  $g,h\in \Homeo(\O,x)$ are 
  supported on the closure of a brick.
\end{lemma}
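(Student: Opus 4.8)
The goal is to write an arbitrary $f \in \Homeo(\O,x)$ as a product of two homeomorphisms of $\O$ fixing $x$, each supported on the closure of a brick. The natural idea is to find a brick $\cB$ such that both $\cB$ and $f(\cB)$ have brick complements, and such that $f$ "splits" cleanly across $\cB$ versus its complement. Recall a brick is an infinite union of the $Y_k$ whose complement in $\O - \{x\}$ is also an infinite union of $Y_j$; in particular every brick and every co-brick is clopen and descends to $x$ (being a union of $Y_k$'s, it inherits $Y_k \searrow x$), and all bricks are homeomorphic by the "furthermore" clause of Proposition~\ref{prop:stable}(iii).

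Here is the approach I would take. Write $\O - \{x\} = \bigsqcup_{k=1}^\infty Y_k$. I want to choose a brick $\cB$ so that $f(\cB) \subset \cB$ with $\cB - f(\cB)$ still a brick — i.e., $f(\cB)$ is a subbrick of $\cB$. To arrange this, use that the orbit structure is rich: start with $\cB_0$ any brick, note $f(\cB_0)$ is a clopen set whose complement is clopen; but $f(\cB_0)$ need not be a union of $Y_k$'s. The clean way is to build $\cB$ by an inductive "ping-pong"/exhaustion: alternately throw a $Y_k$ (smallest unused index) into the co-brick $\cC$ and a large enough $Y_j$ into $\cB$ so that at each finite stage $f$ maps the part of $\cB$ built so far into the part of $\cB$ yet to be completed — using at each step that $f(Y_j)$ is disjoint from a neighborhood of $x$, hence contained in finitely many $Y_i$'s, so one can reserve fresh high-index $Y$'s. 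This produces a brick $\cB$ with $f(\cB)$ a clopen subset of $\cB$, with $\cB - f(\cB)$ infinite, and with $\cC = (\O-\{x\}) - \cB$ a brick. Now $B := \cB \cup \{x\}$ and $C := \cC \cup \{x\}$ are the closures of bricks (adding the limit point $x$).

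With such a $\cB$ in hand, define $g$ and $h$ as follows. On $\cC$, let $g$ act as $f$ does — but $f(\cC)$ is not contained in $\cC$, so this is not literally right; instead I'd use Lemma~\ref{lem:puzzle} / Proposition~\ref{prop:axiomstar}-style reshuffling. Precisely: since $f(\cB) \subsetneq \cB$ is a clopen inclusion of a brick into a brick with brick complement, and all bricks are homeomorphic, there is a homeomorphism $g$ of $\O$ fixing $x$, supported on $\overline{\cB}$, that agrees with $f$ on $\cB$ (this is an "infinite" analog of Proposition~\ref{prop:axiomstar}: $\cB \cong f(\cB) \sqcup (\cB - f(\cB))$ as clopen pieces, and one matches up the complements using that both $\cB - f(\cB)$ and any needed filler are bricks, hence homeomorphic, all compatibly at $x$ since everything descends to $x$). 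Then $h := g^{-1} f$ fixes $x$, and on $\cB$ we have $h = g^{-1}f = \id$; hence $h$ is supported on the complement of $\cB$ in $\O - \{x\}$, i.e. on $\overline{\cC}$. So $f = g h$ with $g$ supported on $\overline{\cB}$ and $h$ on $\overline{\cC}$, both bricks, as required.

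The main obstacle is constructing the brick $\cB$ with $f(\cB)$ a subbrick of $\cB$: one must interleave the exhaustion of $\O-\{x\}$ by the $Y_k$ with the requirement that $\cB$ be $f$-"almost invariant" in the right direction, and simultaneously keep the complement infinite (a genuine brick, not cofinite). This is a bookkeeping argument in the spirit of the proof of Proposition~\ref{prop:stable}(ii)$\Rightarrow$(iii), using repeatedly that $f(Y_k)$ avoids a neighborhood of $x$ so only finitely many $Y_i$ are "hit." The second potential subtlety is the infinite version of the puzzle lemma used to manufacture $g$: one needs that a clopen inclusion of bricks $f(\cB) \hookrightarrow \cB$ extends to a self-homeomorphism of $\overline{\cB}$ fixing $x$, which follows by writing $\cB - f(\cB)$ as a brick and using homogeneity of bricks together with the fact that all the pieces descend to $x$, guaranteeing continuity of the assembled map at $x$ — exactly the mechanism already used at the end of the proof of Proposition~\ref{prop:stable}. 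Once $\cB$ is built, everything else is routine.
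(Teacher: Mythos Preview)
Your definition of $g$ is self-contradictory: you ask for a homeomorphism of $\O$ supported on $\overline{\cB}$ that agrees with $f$ on all of $\cB$, but any homeomorphism supported on $\overline{\cB}$ must restrict to a \emph{bijection} of $\cB$, whereas agreement with $f$ on $\cB$ forces $g(\cB)=f(\cB)\subsetneq\cB$. More structurally, if $g$ is supported on $\overline{\cB}$ and $h$ on the closure of the complementary brick $\cC$, then both preserve $\cB$ setwise and hence so does $f=gh$; so a one-brick scheme of this shape can only succeed when $f(\cB)=\cB$ exactly, not merely $f(\cB)\subset\cB$. And an $f$-invariant brick need not exist: take $\O=\omega+1$, $x=\infty$, $Y_k=\{k\}$, and let $f$ permute $\N$ as the single bi-infinite cycle $\cdots\mapsto 5\mapsto 3\mapsto 1\mapsto 2\mapsto 4\mapsto\cdots$. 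The only proper $f$-forward-invariant subsets are half-cycles, for which $|\cB\smallsetminus f(\cB)|=1$ (so your ``subbrick'' requirement fails), and there is no $f$-invariant brick at all. This example also shows why your inductive construction breaks: once some $Y_j$ is committed to $\cB$, its entire forward $f$-orbit must follow it into $\cB$, colliding with the rule that the smallest unused index goes into $\cC$.

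The paper repairs this by using \emph{two non-complementary} bricks rather than a brick and its complement. One chooses indices $m_k,n_k$ so that $f^{\pm1}(Y_{m_k})$ lands in the block strictly between $Y_{n_{k-1}}$ and $Y_{n_k}$; with $\cA_m=\bigsqcup_k Y_{m_k}$ and $\cA_n=\bigsqcup_k Y_{n_k}$, both $\cA_m$ and $f(\cA_m)$ then sit inside the brick $\cA_n'$. Now the puzzle-style extension you had in mind really is available, because it is asked to extend $f|_{\cA_m}:\cA_m\to f(\cA_m)$ to a self-homeomorphism of the \emph{strictly larger} set $\cA_n'$: one obtains $g$ supported on $\overline{\cA_n'}$ with $g|_{\cA_m}=f|_{\cA_m}$, and then $h=g^{-1}f$ is the identity on $\cA_m$, hence supported on $\overline{\cA_m'}$. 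The supports $\cA_n'$ and $\cA_m'$ are distinct overlapping bricks, and that extra room is exactly what lets $g$ be a genuine bijection.
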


\begin{proof}
Choose indices $n_k$ and $m_k$, where $m_k \in [n_{k-1}+1, n_k-1]$ inductively as follows. Let $n_0 = 0$ 
and $m_1=1$. Then assuming $n_1, \dots, n_{k-1}$ and $m_1, \dots, m_k$ are given
,  choose $n_k > m_k$ 
large enough so that 
\[
f^{\pm 1}(Y_{m_k}) \subset \bigsqcup_{i=n_{k-1}+1}^{n_k-1} Y_i 
\] 
and then choose $m_{k+1}$ large enough such that 
\[
f^{\pm 1}(Y_{m_{k+1}}) \cap \left( \bigsqcup_{i=1}^{n_k} Y_i \right) = \emptyset
. 
\]
Define
\[
\cA_m =  \bigsqcup_{k=1}^\infty  Y_{m_k} 
\qquad\text{and}\qquad
\cA_n =  \bigsqcup_{k=1}^\infty  Y_{n_k} 
\]

Let $\cA_m'$ and $\cA_n'$ be complementary bricks to $\cA_m$ and $\cA_n$ respectively. 
Note that $f(\cA_m)$ is disjoint from $\cA_n$. Therefore, there is a
homeomorphism $g:X \to X$  
with support in $\cA_n'$ such that 
$f|_{\cA_m} = g|_{\cA_m}$. In particular, $h=g^{-1}f$ preserves the brick $\cA_m$.
But $g$
preserves the brick 
$\cA_n$ and $f = gh$. This finishes the proof. 
\end{proof}

Thus, combining these two lemmas we see that any map $h \in G(\O)$ can
be fragmented as $h = h_{1}h_{2}h_{3}h_{4}$ where each $h_{i}$ is in
$G(\O)$ and it is supported on a brick (although not necessarily centered around the same point). Our new goal is to show that any map supported on a brick satisfies a Steinhaus condition. 

\subsection{Step 2: Finding Commutators}

\begin{LEM}\label{lem:anderson}
	Let $\cA$ be a brick around $x$ and $\cB$ a subbrick of $\cA$. For every $h \in G(\cB)$, there exists some $u,v \in G(\cA)$ so that $h=[u,v]$. 
\end{LEM}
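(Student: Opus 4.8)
The plan is to realize $h$ as a single commutator by the Anderson trick \cite{Anderson1958}, exactly as announced for $0$--dimensional spaces in Step~2. The first task is to produce a shift $\sigma$ on $\cA$ so that the bricks $\sigma^i(\cB)$, $i\ge 0$, are pairwise disjoint and descend to $x$. Since $\cB$ is a subbrick of $\cA$, the complement $\cA\smallsetminus\cB$ is again a brick, so I can write $\cA=\bigsqcup_{i\in\Z}\cB_i$ as a disjoint union of bricks with $\cB_0=\cB$ and $\cB_i\searrow x$ as $|i|\to\infty$ (re-index the clopen sets $Y_k$ comprising $\cA$ by $\Z^2$ exactly as in the proof of \Cref{lem:shift}, taking $\cB$ to be the zeroth row; each row is then a brick and the rows descend to $x$). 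Since all bricks are homeomorphic (the ``furthermore'' clause of \Cref{prop:stable}(iii)), there is a homeomorphism $\sigma$ of $X$ fixing $x$, supported on $\cA\cup\{x\}$, with $\sigma(\cB_i)=\cB_{i+1}$ for every $i\in\Z$; it is continuous at $x$ precisely because $\cB_i\searrow x$. Writing $\cB_i=\sigma^i(\cB)$, we have $\sigma\in G(\cA)$ and the $\sigma^i(\cB)$, $i\ge 0$, are pairwise disjoint and descend to $x$.

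Next I set $h_i\defeq\sigma^i h\sigma^{-i}$ — a homeomorphism of $X$ fixing $x$, supported on $\sigma^i(\cB)\cup\{x\}$ — and define the bijection $w\defeq\prod_{i=0}^\infty h_i$ of $X$, equal to $h_i$ on $\sigma^i(\cB)$ for each $i\ge 0$ and to the identity off $\bigsqcup_{i\ge 0}\sigma^i(\cB)$; this is well defined since the supports are pairwise disjoint. The one genuinely delicate point is that $w$ is a homeomorphism. Away from $x$ every point has a clopen neighborhood meeting only finitely many $\sigma^i(\cB)$ (because $\sigma^i(\cB)\searrow x$), on which $w$ is a finite product of the $h_i$, hence continuous. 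At $x$: given a clopen neighborhood $V\ni x$, only finitely many $\sigma^i(\cB)$ fail to lie inside $V$, and shrinking $V$ using the continuity at $x$ of those finitely many $h_i$ — while observing that $w$ preserves each $\sigma^i(\cB)$, hence maps those already inside $V$ into $V$ — produces a neighborhood of $x$ that $w$ carries into $V$. The same applies to $w^{-1}=\prod_{i\ge 0}h_i^{-1}$. So $w\in\Homeo(X)$, and its support lies in $\bigsqcup_{i\ge 0}\sigma^i(\cB)\cup\{x\}\subset\cA\cup\{x\}$, whence $w\in G(\cA)$.

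The commutator identity is now purely formal. Conjugating termwise, $\sigma w\sigma^{-1}=\prod_{i=0}^\infty\sigma h_i\sigma^{-1}=\prod_{i=0}^\infty h_{i+1}=\prod_{i=1}^\infty h_i$, which is supported off $\cB=\sigma^0(\cB)$; since $h_0=h$, comparing actions on each $\sigma^i(\cB)$ gives $w=h_0\cdot\bigl(\sigma w\sigma^{-1}\bigr)=h\cdot\sigma w\sigma^{-1}$. Rearranging, $h=w\,\sigma w^{-1}\sigma^{-1}=[w,\sigma]$, so the lemma holds with $u=w$ and $v=\sigma$, both in $G(\cA)$.

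I expect the only real obstacle to be verifying that $w$ is a homeomorphism; everything else is formal. This hinges on recording the refinement — implicit in the proof of \Cref{lem:shift} but not in its statement — that the translated bricks $\sigma^i(\cB)$ actually shrink to $x$, since pairwise disjointness of the supports alone does not force continuity of $w$ or $w^{-1}$ at $x$.
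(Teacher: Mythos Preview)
Your proof is correct and follows essentially the same Anderson-trick approach as the paper: build a shift $\sigma\in G(\cA)$ with pairwise disjoint translates $\sigma^i(\cB)$, set $u=\prod_{i\ge 0}\sigma^i h\sigma^{-i}$, and compute $h=[u,\sigma]$. The paper invokes \Cref{lem:shift} directly (using that a brick is homeomorphic to $\O\smallsetminus\{x\}$) and leaves the continuity of the infinite product implicit, whereas you reconstruct the shift via the $\Z^2$ indexing and spell out the continuity check at $x$; your observation that one needs $\sigma^i(\cB)\searrow x$, not merely pairwise disjointness, is exactly right and worth recording.
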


\begin{proof}
	We apply \Cref{lem:shift} to find a homeomorphism $v\in
        \Homeo(\O,x)$ supported on $\cA$ so that $v^{i}(\cB)$ are
        pairwise disjoint for $i \in \Z$. Note that we can apply this
        lemma since any brick is itself homeomorphic to $\O \setminus
        \{x\}$. Now let $u$ be the map that is exactly a copy of $h$
        on each $v^{i}(\cB)$ for $i \geq 0$, i.e. $u =
        \prod_{i=0}^{\infty} vhv^{-1}$. Then we have $h = [u,v]$ as
        desired.
\end{proof}

\subsection{Step 3: Diagonalization}

Next we will find a ``good'' subbrick. Namely, any map supported on this subbrick will be in $W^{8}$. First we see that we can find a subbrick on which we can approximate maps. A similar step is carried out in \cite{RS2007, Rosendal2008, Mann2016, Mann2024}.

\begin{lemma}\label{lem:diagonal}
  Let $\cA$ be a brick and $\cA_1,\cA_2,\cdots$ pairwise disjoint
  subbricks. Then
  \begin{enumerate}[(i)]
    \item there is some $i\geq 1$ such that any homeomorphism of $\cA_i$
      extends to a homeomorphism of $X$ supported on $\cA$ that belongs to $g_iW$, and
      \item moreover, every homeomorphism of $\cA_i$
        extends to a homeomorphism of $X$ supported on $\cA$ that belongs to $W^2$.
  \end{enumerate}
\end{lemma}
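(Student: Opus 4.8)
The plan is to reduce everything to part (i): part (ii) follows from (i) by an elementary coset manipulation, so I would establish (i) first, by the diagonal argument in the style of \cite{RS2007, Rosendal2008, Mann2016, Mann2024}.

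For (i), I would argue by contradiction. Write $G=\bigcup_{n\geq 1}g_nW$ as in the standing assumptions. If (i) fails, then for each $i\geq 1$ we may choose a witness $\psi_i\in G(\cA_i)$, i.e.\ a homeomorphism of $\cA_i$ such that \emph{no} element of $G(\cA)$ whose restriction to $\cA_i$ agrees with $\psi_i$ lies in $g_iW$. The point is to assemble all of these potential obstructions into a single homeomorphism: set $\Psi:=\prod_{i=1}^\infty\psi_i$. Since the $\cA_i$ are pairwise disjoint and each $\psi_i$ fixes $x$ and is supported on $\overline{\cA_i}=\cA_i\cup\{x\}$, this product is a self-homeomorphism of $X$ supported on $\bigsqcup_i\cA_i\subseteq\cA$, hence $\Psi\in G(\cA)$; the only thing to check is continuity at $x$, which holds because pairwise disjoint subbricks of $\cA$ descend to $x$. (Indeed, each $\cA_i$ is a disjoint union of $Y_k$'s indexed by an infinite set $S_i\subseteq\N$, with the $S_i$ pairwise disjoint, so for every $N$ at most $N$ of the $\cA_i$ meet $Y_1\cup\cdots\cup Y_N$; combined with $Y_k\searrow x$ this gives $\cA_i\searrow x$, and each $\psi_i$ preserves $\cA_i$.) Now $\Psi\in G=\bigcup_n g_nW$, so $\Psi\in g_mW$ for some $m$. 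But $\Psi\in G(\cA)$ and $\Psi|_{\cA_m}=\psi_m|_{\cA_m}$, since every factor other than the $m$-th is the identity on $\cA_m$; thus $\Psi$ is precisely an extension of $\psi_m$ of the kind we assumed not to exist — a contradiction. Hence some index $i$ works for (i).

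For (ii), fix the index $i$ produced by (i). Applying (i) to the identity of $\cA_i$ yields $\Psi_0\in G(\cA)\cap g_iW$ with $\Psi_0|_{\cA_i}=\id$; since $W$ is symmetric, $\Psi_0\in g_iW$ forces $g_i\in\Psi_0W$, hence $g_iW\subseteq\Psi_0W^2$. Given an arbitrary homeomorphism $\psi$ of $\cA_i$, let $\Psi\in G(\cA)\cap g_iW$ be the extension with $\Psi|_{\cA_i}=\psi|_{\cA_i}$ furnished by (i). Then $\Psi_0^{-1}\Psi\in W^2$, it lies in $G(\cA)$, and $(\Psi_0^{-1}\Psi)|_{\cA_i}=\psi|_{\cA_i}$ because $\Psi_0^{-1}$ is the identity on $\cA_i$. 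This $\Psi_0^{-1}\Psi$ is the desired $W^2$-extension of $\psi$.

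I do not expect a serious obstacle here; the argument is essentially the one in the cited references. The two points requiring care are: (a) checking that $\Psi$ is a genuine homeomorphism — that is, continuity at $x$, which is exactly where disjointness of the subbricks (equivalently $\cA_i\searrow x$) is used; and (b) the precise reading of ``extends to a homeomorphism of $X$ supported on $\cA$,'' namely that the extension is allowed to do anything on $\cA\setminus\cA_i$. It is precisely this freedom that lets the $m$-th coordinate of the assembled map $\Psi$ contradict the choice of $\psi_m$, which is the mechanism making the diagonalization work.
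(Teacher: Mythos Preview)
Your argument is correct and is essentially the paper's own proof: the diagonal assembly $\Psi=\prod_i\psi_i$ for (i) and the coset trick $\Psi_0^{-1}\Psi\in W^2$ for (ii) match the paper's $h=\prod_i h_i$ and $\tilde f^{-1}\tilde h$ verbatim. Your extra care about continuity of $\Psi$ at $x$ (via $\cA_i\searrow x$) is a detail the paper leaves implicit, so if anything your write-up is slightly more complete.
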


\begin{proof}
  We prove (i) by contradiction. Suppose for every $i$ there is a
  homeomorphism $h_i$ of $\cA_i$ that does not extend to a homeomorphism
  of $X$ supported on $\cA$ that belongs to $g_iW$. Define $h$ to be a homeomorphism of
  $\cup_i \cA_i$ that agrees with $h_i$ on $\cA_i$.
  Extend $h$ by the identity on $X\smallsetminus \cA$. Then
  $h\in g_iW$ for some $i$, contradicting the choice of $h_i$. For
  (ii), let $\tilde h\in G(\cA)$ be a homeomorphism in $g_iW$ extending the
  given homeomorphism $h$ of $\cA_i$, and let $\tilde f\in g_iW\cap G(\cA)$ extend the
  identity on $\cA_i$. Then $\tilde f^{-1}\tilde h\in W^2$ extends $h$.
\end{proof}

Next we upgrade this approximation.

\begin{lemma}\label{lem:diagonal2}
	Let $\cA$ be a brick. Then there exists a subbrick $\cZ$ of $\cA$ so that $G(\cZ) \subset W^{8}$. 
\end{lemma}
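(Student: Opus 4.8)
The plan is to show that every $h\in G(\cZ)$ is a single commutator $h=[\tilde u,\tilde\sigma]$ with $\tilde u,\tilde\sigma\in W^{2}$; since $W=W^{-1}$ this immediately gives $h=\tilde u\,\tilde\sigma\,\tilde u^{-1}\,\tilde\sigma^{-1}\in W^{8}$, hence $G(\cZ)\subseteq W^{8}$. The two factors will come from the shift of \Cref{lem:shift} and the commutator construction in the proof of \Cref{lem:anderson}, promoted into $W^{2}$ using the diagonalization of \Cref{lem:diagonal}; the real issue is to arrange the ``junk'' produced when a homeomorphism of a subbrick is extended to a $W^{2}$ element so that it cancels in the commutator.

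Concretely, I would first use \Cref{lem:diagonal} — together with the fact (already noted) that goodness passes to subbricks — to produce a decomposition $\cA=\cP\sqcup\cQ\sqcup\cR$ into pairwise disjoint subbricks such that \emph{both} $\cC:=\cQ\sqcup\cR$ and $\cD:=\cP\sqcup\cQ$ are good, that is, every homeomorphism of $\cC$ (resp.\ of $\cD$) extends to an element of $W^{2}$ supported on $\cA$ — hence with its junk confined to $\cA\setminus\cC=\cP$ (resp.\ to $\cA\setminus\cD=\cR$). I then fix a subbrick $\cZ\subseteq\cQ$ with $\cQ\setminus\cZ$ a brick, and, applying \Cref{lem:shift} to $\cQ\cup\{x\}$, a shift $\sigma$ supported on $\cQ$, fixing $x$, with the bricks $\sigma^{i}(\cZ)$, $i\in\Z$, pairwise disjoint.

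Given $h\in G(\cZ)$, I extend $\sigma$ — viewed as the homeomorphism of $\cC$ equal to $\sigma$ on $\cQ$ and to the identity on $\cR$ — through goodness of $\cC$ to $\tilde\sigma\in W^{2}$ supported on $\cA$; thus $\tilde\sigma=\sigma$ on $\cQ$, $\tilde\sigma=\mathrm{id}$ on $\cR$, and $\tilde\sigma$ is arbitrary only on $\cP$; in particular $\tilde\sigma^{i}(\cZ)=\sigma^{i}(\cZ)$ for all $i$. Setting $u:=\prod_{i\ge 0}\sigma^{i}h\sigma^{-i}$, a homeomorphism supported on the subbrick $\bigsqcup_{i\ge 0}\sigma^{i}(\cZ)\subseteq\cQ$, the computation in the proof of \Cref{lem:anderson} gives $h=[u,\sigma]=[u,\tilde\sigma]$ (the last equality because $\tilde\sigma$ agrees with $\sigma$ on $\cQ$, which contains the supports of $u$ and of all $\sigma^{i}(\cZ)$). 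Finally I extend $u$ — viewed as the homeomorphism of $\cD$ equal to $u$ on $\cQ$ and to the identity on $\cP$ — through goodness of $\cD$ to $\tilde u\in W^{2}$ supported on $\cA$; thus $\tilde u=u$ on $\cQ$, $\tilde u=\mathrm{id}$ on $\cP$, and $\tilde u$ is arbitrary only on $\cR$. It remains to check $[\tilde u,\tilde\sigma]=h$: both $\tilde u$ and $\tilde\sigma$ preserve each of $\cP,\cQ,\cR$ and are the identity off $\cA$, so on $\cQ$ we get $[\tilde u,\tilde\sigma]|_{\cQ}=[u|_{\cQ},\sigma]=h|_{\cQ}$, on $\cR$ the factor $\tilde u$ is trivial so the commutator is trivial, on $\cP$ the factor $\tilde\sigma$ is trivial so the commutator is trivial, and off $\cA$ it is trivial; since $h$ is the identity outside $\cZ\subseteq\cQ$ this yields $[\tilde u,\tilde\sigma]=h\in W^{8}$.

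The main obstacle is the construction of the two good subbricks $\cC,\cD$ with $\cC\cup\cD=\cA$ and $\cC\cap\cD=\cQ$ a brick: \Cref{lem:diagonal} only guarantees that \emph{some} member of a presented disjoint family is good, so one cannot simply demand that $\cC$ and $\cD$ be the desired ones. I would obtain them by running \Cref{lem:diagonal} a few times — first splitting $\cA$ into two subbricks and naming the good one $\cD$, then splitting $\cD$ and presenting the family $\{(\cA\setminus\cD)\sqcup\cD',\ \cD''\}$, relabelling $\cP,\cQ,\cR$ according to which came out good, and iterating if necessary — using at each stage that goodness is inherited by subbricks. Once $\cC,\cD$ are in hand everything else is a routine support bookkeeping for commutators.
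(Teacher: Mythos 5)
Your commutator mechanism is sound and is essentially the same idea as the paper's: produce $h=[u,\sigma]$ by the Anderson trick inside a small subbrick, promote $u$ and $\sigma$ to elements of $W^{2}$ using \Cref{lem:diagonal}, and arrange the ``junk'' of the two promotions on disjoint pieces, each contained in the identity locus of the other element, so that the junk cancels in the commutator. The extension and support bookkeeping you carry out for $[\tilde u,\tilde\sigma]$ is correct.

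The gap is exactly where you flag it, and it is genuine: the decomposition $\cA=\cP\sqcup\cQ\sqcup\cR$ with \emph{both} $\cQ\sqcup\cR$ and $\cP\sqcup\cQ$ good inside $\cA$ cannot be produced by the tools available. \Cref{lem:diagonal} is a diagonalization over the countable cover $\{g_iW\}$: its proof assigns one subbrick $\cA_i$ to each $g_i$ and concludes that \emph{some} $\cA_i$ is good. It therefore requires a countably infinite pairwise-disjoint family (a two-element family, as in your suggested $\{(\cA\setminus\cD)\sqcup\cD',\ \cD''\}$, does not let the diagonalization run), and even with a countable family it returns only one member, with no control over which. So you cannot ask for a specific large piece like $(\cA\setminus\cD)\sqcup\cD'$ to be the good one, and the fallback ``relabel and iterate'' has no termination argument: each failed attempt merely hands you a smaller good subbrick of $\cD$, and you can shrink forever without ever covering $\cA$ with two good subbricks. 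Producing two \emph{overlapping} good subbricks whose union is $\cA$ is precisely what \Cref{lem:diagonal} is not equipped to do.

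The paper sidesteps this by nesting rather than overlapping. Apply \Cref{lem:diagonal} once to get $\cB\subset\cA$ good in $\cA$; apply it again \emph{with $\cB$ as the new ambient brick} to get $\cC\subset\cB$ good in $\cB$; take any subbrick $\cZ\subset\cC$. Anderson inside $\cC$ gives $h=[u,v]$ with $u,v\in G(\cC)$. Then extend $u$ by \Cref{lem:diagonal} (applied in $\cB$) to $\bar u\in W^{2}$ supported on $\cB$ with $\bar u|_{\cC}=u$, junk confined to $\cB\setminus\cC$; and extend $v$, viewed as a homeomorphism of $\cB$ (identity on $\cB\setminus\cC$), to $\bar v\in W^{2}$ supported on $\cA$ with $\bar v|_{\cB}=v$, junk confined to $\cA\setminus\cB$. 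Since $\bar u$ is the identity off $\cB$ and $\bar v$ is the identity on $\cB\setminus\cC$, the junk cancels exactly as in your computation and $[\bar u,\bar v]=[u,v]=h\in W^{8}$. In your notation this corresponds to $\cQ=\cC$, $\cP=\cB\setminus\cC$, $\cR=\cA\setminus\cB$, but with the key difference that only the \emph{nested} pieces $\cB$ and $\cC$ need to be good (relative to $\cA$ and $\cB$ respectively) — each obtainable by a single straightforward application of \Cref{lem:diagonal} — rather than the two overlapping unions $\cQ\sqcup\cR$ and $\cP\sqcup\cQ$.
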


\begin{proof}
Apply Lemma \ref{lem:diagonal} to $\cA$ and label the resulting subbrick $\cB$. Repeat this process to $\cB$ and label the subbrick $\cC$. 
Let $\cZ$ be any subbrick of $\cC$. Now apply \Cref{lem:anderson} to $\cZ$ as a subbrick of $\cC$. Therefore, for $h \in G(\cZ)$, 
there exists some $u,v \in G(\cC)$ so that $h = [u,v]$. 
	
The support of $u$ is contained in $\cC$ and so we can apply Lemma \ref{lem:diagonal}  to find a $\bar{u}$ such that $\bar{u} \in W^{2}$, $\bar{u}$ is supported on 
$\cB$, and $\bar{u}\vert_{\cC} = u$. 
Similarly, the support of $v$ is contained in $\cC \subset \cB$. Lemma \ref{lem:diagonal} implies that there exists $\bar{v}$ such that $\bar{v} \in W^{2}$, 
$\bar{v}$ is supported on $\cA$, and $\bar{v}\vert_{\cB}= v$. In particular $\bar{v}$ is the identity in $\cB - \cC$. 

Since $\bar{u}$ is the identity in $X - \cB$ and $\bar{v}$ is the identity in $\cB - \cC$, we have $[\bar{u},\bar{v}]$ is the identity in $X-\cC$. 
Thus we have $[\bar{u},\bar{v}]= [u,v] = h$ and hence $h \in W^{8}$.
\end{proof}

\subsection{Step 4: Pigeonhole}

In the previous step we found a ``good'' subbrick. Next we will see that we can conjugate any map supported on any given brick by maps in $W$ in order to have support on this good brick (up to a finite amount of error). This will allow us to prove that any map supported on a brick is in $W^{24}$. 

\begin{lemma}\label{lem:pigeonhole}
	Every $h \in \Homeo(\O,x)$ supported on a brick is in $W^{24}$. 
\end{lemma}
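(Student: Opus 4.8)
The strategy is to combine the "good" subbrick $\cZ$ produced by \Cref{lem:diagonal2} with a pigeonhole argument that allows us to push the support of an arbitrary brick into $\cZ$ using boundedly many elements of $W$. First I would fix a brick $\cB$ and the map $h\in\Homeo(\O,x)$ supported on $\cB$. By \Cref{lem:diagonal2} applied to $\cB$ (which is itself homeomorphic to $\O\setminus\{x\}$, so the whole machinery applies), there is a subbrick $\cZ\subset\cB$ with $G(\cZ)\subset W^{8}$. Inside $\cB$ choose a countable pairwise disjoint family of subbricks $\cB_1,\cB_2,\dots$; the point of the pigeonhole step is to find, among the countably many translates $g_iW$ covering $G$, a single index $i$ and an element of $W$ that carries $\cZ$ into one of the $\cB_j$, or rather carries some $\cB_j$ into (a subbrick of) $\cZ$.

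The core of the argument is the following diagonal/pigeonhole construction. For each $j$, since all bricks are homeomorphic, pick a homeomorphism $\theta_j\in\Homeo(\O,x)$ with $\theta_j(\cB_j)\subset\cZ$ (here one needs that $\cB_j$ is a subbrick and $\cZ$ is a brick; map $\cB_j$ onto a subbrick of $\cZ$ and extend over the complementary bricks using \Cref{prop:axiomstar} as in \Cref{lem:brickfragment}). Assemble a single homeomorphism $\Theta$ of $\O$ supported on $\cB$ that agrees with $\theta_j$ on each $\cB_j$ (the $\cB_j$ are disjoint and can be chosen to descend to $x$, so this is continuous). Since the $g_iW$ cover $G$, we have $\Theta\in g_iW$ for some $i$; also pick $\tilde f\in g_iW\cap G(\cB)$ extending the identity on $\cB_{j_0}$ for the relevant $j_0$, exactly as in \Cref{lem:diagonal}(ii), so that $\tilde f^{-1}\Theta\in W^{2}$ and this element sends $\cB_{j_0}$ into $\cZ$ by $\theta_{j_0}$. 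Now given $h$ supported on $\cB$: first conjugate by a shift (\Cref{lem:shift} applied to the brick $\cB$ and subbrick $\cB_{j_0}$) to replace $h$ by a conjugate $h'$ supported on $\cB_{j_0}$ — this costs a bounded number of $W$'s once we know the shift lies in a bounded power of $W$, which itself follows from \Cref{lem:diagonal2} and \Cref{lem:anderson} since the shift is a commutator of maps supported on a brick. Then $(\tilde f^{-1}\Theta)\,h'\,(\tilde f^{-1}\Theta)^{-1}$ is supported on $\theta_{j_0}(\cB_{j_0})\subset\cZ$, hence lies in $G(\cZ)\subset W^{8}$; conjugating back gives $h'\in W^{8+2\cdot 2}=W^{12}$ and then $h\in W^{12+\text{(cost of the shift conjugation)}}$.

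I would then bookkeep the exponents to land at $24$: the factorization $h=h_1h_2h_3h_4$ into brick-supported pieces is already done in Step 1, but here the statement is only about a single $h$ supported on one brick, so the budget is: $W^{8}$ for membership in $G(\cZ)$, plus $W^{2}+W^{2}$ for the two conjugating elements $\tilde f^{-1}\Theta$ and its inverse (each in $W^{2}$), plus the cost of moving $h$ into $\cB_{j_0}$ by a shift, where the shift and its inverse are each realized in a bounded power of $W$ via the commutator trick of \Cref{lem:anderson} combined with \Cref{lem:diagonal2}. Tracking that the shift costs at most $W^{2}+W^{2}$ on each side after expressing it as a single commutator of two elements of $G(\cZ')\subset W^{8}$ for an appropriate good subbrick — one has to be slightly careful and possibly pass to nested good subbricks — the total comes to $24$. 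The main obstacle, and the place I expect to spend the most care, is precisely this exponent accounting: making sure that every conjugating homeomorphism used is genuinely realized inside a bounded power of $W$ (not merely in $G(\O)$), which forces one to invoke \Cref{lem:diagonal2} on the correct nested sequence of subbricks and to use \Cref{lem:anderson} to turn each auxiliary shift into a commutator of $W^{8}$-elements before counting. Everything else — the existence of the homeomorphisms $\theta_j$, the disjointness and descent of the $\cB_j$, and the pigeonhole over the cover $G=\bigcup g_iW$ — is routine given \Cref{prop:axiomstar}, \Cref{lem:shift}, and \Cref{lem:diagonal}.
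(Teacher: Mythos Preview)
Your outline has a genuine gap at Step~8, and the pigeonhole mechanism in Steps~5--7 is not the one that does the work here.

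\textbf{The Step 8 gap.} You need a homeomorphism $\sigma\in\Homeo(\O,x)$ sending all of $\cB$ into the subbrick $\cB_{j_0}$, and you claim $\sigma$ lies in a bounded power of $W$ via \Cref{lem:anderson} and \Cref{lem:diagonal2}. But \Cref{lem:anderson} only applies to maps supported on a subbrick, and any $\sigma$ with $\sigma(\cB)\subsetneq\cB$ \emph{cannot} be supported on $\cB$: if it were the identity on the complementary brick it would satisfy $\sigma(\cB)=\cB$. So $\sigma$ is not brick--supported and you have no route to bounding it in $W^N$ short of assuming the lemma itself. Density of $W^2$ in $G(\O)$ does not help either, since ``$\sigma(\cB)\subset\cB_{j_0}$'' imposes infinitely many clopen containments and is not an open condition in the compact--open topology. (Your Steps~5--7 are also tangled: knowing $\Theta\in g_iW$ does not produce an $\tilde f\in g_iW$ that is the identity on any prescribed $\cB_{j_0}$; \Cref{lem:diagonal} hands you \emph{some} index, not the one matching $\Theta$. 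A cleaner reading---apply \Cref{lem:diagonal} directly to $\{\cB_j\}$ to get $j_0$ and then extend both $\theta_{j_0}|_{\cB_{j_0}}$ and $\id|_{\cB_{j_0}}$ into the same $g_{j_0}W$---does yield $\phi\in W^2$ with $\phi(\cB_{j_0})\subset\cZ$, but you are still stuck at Step~8.)

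\textbf{What the paper does instead.} The paper places $\cZ$ in the brick \emph{complementary} to $\cA=\supp(h)$ and then runs an \emph{uncountable} pigeonhole: decompose $\cZ=\bigsqcup_i\cZ_i$ into subbricks, take an almost--disjoint family $\{\Lambda_\alpha\}_{\alpha}$ of infinite subsets of $\N$, set $\cZ_\alpha=\bigsqcup_{i\in\Lambda_\alpha}\cZ_i$, and let $f_\alpha$ interchange $\cZ_\alpha$ with its complement in $\O\setminus\{x\}$. Uncountability forces $f_\alpha,f_\beta$ into a common $g_iW$, so $f_\beta^{-1}f_\alpha\in W^2$ \emph{for free}---no commutator bookkeeping needed. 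Since $\cA$ is disjoint from $\cZ_\alpha$, the conjugate $F=(f_\beta^{-1}f_\alpha)h(f_\beta^{-1}f_\alpha)^{-1}$ is supported in $\cZ$ except for a \emph{finite} error coming from $\Lambda_\alpha\cap\Lambda_\beta$. That finite piece is pushed into $\cZ$ by density of $W^2$ in $G(\O)$, which \emph{is} an open condition when only finitely many clopen sets are involved. This yields $F\in W^{20}$ and $h\in W^{24}$. The two ideas your sketch is missing are the uncountable pigeonhole (to manufacture an element of $W^2$ directly) and the arrangement that the residual error is compactly supported (so density suffices to handle it).
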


\begin{proof} Suppose $h$ is supported on a brick $\cA=\bigsqcup_i A_i$. 
  Let $\cZ$ be as in Lemma \ref{lem:diagonal2} applied to the complementary brick to $\cA$, so that $\cZ$ and
  $\cA$ are disjoint.
  Write $\cZ=\sqcup_{i=1}^\infty \cZ_i$, where each $\cZ_{i}$ is itself a brick. Fix uncountably many infinite subsets
  $\Lambda_\alpha\subset\mathbb N$ that pairwise intersect in finite
  sets, and set $\cZ_\alpha=\sqcup_{i\in\Lambda_\alpha} \cZ_i$. Fix a
  homeomorphism $f_\alpha:X\to X$ that fixes $x$ and interchanges
  $\cZ_\alpha$ and its complement in $X\smallsetminus\{x\}$. By the
  pigeon-hole principle, there exist $\alpha\neq\beta$ so that
  $f_\alpha,f_\beta\in g_iW$ for some $i$, and in particular
  $f_\beta^{-1}f_\alpha\in W^2$. We claim that
  $F:=(f_\beta^{-1}f_\alpha)h(f_\beta^{-1}f_\alpha)^{-1}$ is supported on
  $\cZ$ plus a finite number of the $A_i$.
  
  To prove the claim, first note that the support of $F$ is contained
  in $\cup_{i=1}^\infty f_\beta^{-1}f_\alpha(A_i)$. Recall that $\cA$ is disjoint from $\cZ$ and hence from $\cZ_\alpha$, so
  $f_\alpha(A_i)$ is contained in $\cZ_\alpha$ for all $i$. Furthermore, if $i$ is sufficiently
  large then $f_\alpha(A_i)$ will be disjoint from $\cZ_\beta$ since
  $\cZ_\alpha\cap \cZ_\beta$ is disjoint from a neighborhood of $x$. It
  follows that for all sufficiently large $i$, $f_\beta^{-1}f_\alpha(A_i)\subset Z_\beta\subset Z$,
  which proves the claim.
  
  Thus we can write $F = F_{1}F_{2}$ where $F_{1}$ is supported on
  finitely many of the $Y_{k}$, $\bigsqcup_{k=1}^{n} Y_{k}$, and
  $F_{2} \in G(\cZ)$. By \Cref{lem:diagonal2} we have that $F_{2} \in
  W^{8}$. It only remains to check that $F_{1}$ is in $W^{12}$. By the
  property $(iii)$ of \Cref{prop:stable}, there is a homeomorphism $g
  \in G(\O)$ that maps $\bigsqcup_{i=1}^{n} Y_{i}$ into $\cZ$. Since
  $W^{2}$ is dense in $G(\O)$, we take $\tilde{g} \in W^{2}$ with this same property (this
  $\tilde{g}$ may not fix $x$ and may not belong to $G(\O)$). Thus we have that $gF_{1}g^{-1} \in G(\cZ) \subset W^{8}$ and therefore $F_{1} \in W^{12}$. 
  
  Therefore, we have $F = (f_\beta^{-1}f_\alpha)h(f_\beta^{-1}f_\alpha)^{-1} \in W^{20}$ and so $h \in W^{24}$. 
  \end{proof}
  
  \subsection{Step 5: Wrapping Up}
  
  We will first verify a version of the Steinhaus condition for stable neighborhoods. 

\begin{thm}\label{thm:globalpointed}
  Let $X$ be a second countable Stone space, $\O$ a stable
  neighborhood of $x\in X$, and $G=\Homeo(X)$. Suppose $W$ is a
  symmetric set in $G$ such that:
  \begin{itemize}
    \item $G$ is covered by countably many sets $g_iW$, $g_i\in G$,
      and
    \item $W^2$ is dense in $G(\O)$.
  \end{itemize}
  Then $G(\O)\subseteq W^{96}$.
\end{thm}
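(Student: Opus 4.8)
The plan is to combine the fragmentation results of Step 1 with the brick estimate from Step 4 (\Cref{lem:pigeonhole}), simply tallying up the number of times we invoke membership in $W$. The one subtlety is bookkeeping: \Cref{lem:pigeonhole} is stated for maps supported on a brick that fix $x$, whereas the fragmentation in Step 1 produces maps that fix \emph{some} point in the $G(\O)$-orbit of $x$ and are supported on bricks centered around possibly different points. So the first thing I would do is observe (as already noted after \Cref{lem:fpfragment}) that $\O$ is stable with respect to any point $y \in G(\O)\cdot x$, and that the entire Step 2--Step 4 machinery applies verbatim with $x$ replaced by $y$; in particular \Cref{lem:pigeonhole} gives $h \in W^{24}$ for any $h \in \Homeo(\O,y)$ supported on a brick around $y$, for every such $y$. (One must check the hypothesis ``$W^2$ dense in $G(\O)$'' is unaffected — it is, since $G(\O)$ does not depend on the basepoint.)

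Given that, here is the count. Start with an arbitrary $h \in G(\O)$. By \Cref{lem:fpfragment}, write $h = h_1 h_2$ with each $h_i \in G(\O)$ fixing some point $y_i \in G(\O)\cdot x$. By \Cref{lem:brickfragment} applied with basepoint $y_i$, write each $h_i = g_i h_i'$ where $g_i, h_i' \in \Homeo(\O, y_i)$ are each supported on the closure of a brick (around $y_i$). This expresses $h$ as a product of four maps, each supported on a brick and fixing the relevant basepoint. By the basepoint-agnostic version of \Cref{lem:pigeonhole} discussed above, each of these four factors lies in $W^{24}$. Hence $h \in W^{96}$, which is exactly the claimed bound $G(\O) \subseteq W^{96}$.

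\textbf{Main obstacle.} There is no genuine obstacle here — the theorem is a wrapping-up step — but the point requiring the most care is making sure that all of the Step 2--4 lemmas, which as written fix a specific point $x$ and a specific decomposition $\O \setminus \{x\} = \bigsqcup Y_k$ from \eqref{Eq:Decomposition}, really do transfer to an arbitrary basepoint $y$ in the orbit of $x$. This is legitimate precisely because stability of $\O$ with respect to $y$ gives its own decomposition $\O \setminus \{y\} = \bigsqcup Y_k'$ via \Cref{prop:stable}, and the arguments of \Cref{lem:shift}, \Cref{lem:anderson}, \Cref{lem:diagonal}, \Cref{lem:diagonal2} and \Cref{lem:pigeonhole} only used properties of that decomposition (existence of shifts, homogeneity of bricks, \Cref{prop:axiomstar}), never the particular identity of the basepoint. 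Once this is spelled out, the proof is just the arithmetic $4 \times 24 = 96$, and I would present it in two or three sentences.
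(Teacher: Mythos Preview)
Your proposal is correct and matches the paper's proof essentially line for line: apply \Cref{lem:fpfragment} then \Cref{lem:brickfragment} to write $h$ as a product of four maps each supported on a brick around some point in $G(\O)\cdot x$, then invoke \Cref{lem:pigeonhole} at each basepoint to get $W^{24}$ per factor and $W^{96}$ in total. Your explicit discussion of why Steps 2--4 transfer to a different basepoint $y$ in the orbit is in fact more careful than the paper, which dispatches this point in a single clause.
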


\begin{proof}
    Let $h \in G$. We first apply \Cref{lem:fpfragment} and \Cref{lem:brickfragment} in order to write $h=h_{1}h_{2}h_{3}h_{4}$ where each $h_{i} \in G(\O)$ fixes a point $x_{i} \in G(\O) \cdot x$ so that $\O$ is telescoping with respect to $x_{i}$ and the support of $h_{i}$ is contained in a brick around $x_{i}$. Next we apply \Cref{lem:pigeonhole} to see that each $h_{i}$ is contained in $W^{24}$. We conclude that $h \in W^{96}$.
  \end{proof}
  
  Finally, we can upgrade this to the case that $X$ is stable. We first need a short lemma on stable spaces. 
  
  \begin{lemma}\label{lem:stablepartition}
  Suppose $X$ is stable. Then $X$ can be written as a
  finite disjoint union of spaces each of which is a stable neighborhood of one of its points.
\end{lemma}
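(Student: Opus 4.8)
The plan is to cover $X$ by stable neighborhoods, use compactness to extract a finite subcover, and then disjointify. First, since $X$ is stable, every point $x \in X$ has a stable (clopen) neighborhood $\Omega_x$; by definition clopen neighborhoods form a basis, so $\{\Omega_x\}_{x \in X}$ is an open cover of the compact space $X$. Extract a finite subcover $\Omega_{x_1}, \dots, \Omega_{x_N}$.

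Next I would disjointify while preserving stability. Set $\Omega_1 = \Omega_{x_1}$ and, for $i \geq 2$, set $\Omega_i = \Omega_{x_i} \setminus (\Omega_{x_1} \cup \dots \cup \Omega_{x_{i-1}})$. Each $\Omega_i$ is clopen (finite Boolean combination of clopen sets), the $\Omega_i$ are pairwise disjoint, and their union is $X$. Discard any $\Omega_i$ that is empty. The key point to check is that each nonempty $\Omega_i$ is still a stable neighborhood of one of its points. Here is where the argument has some content: removing a clopen set from $\Omega_{x_i}$ can in principle destroy stability with respect to $x_i$ (e.g. if we delete a neighborhood of $x_i$ itself). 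So instead I would argue as follows. Pick any point $y \in \Omega_i$. Since $y \in \Omega_{x_j}$ for some $j$, and $\Omega_{x_j}$ is stable, $y$ is a stable point of $X$; let $V$ be a stable neighborhood of $y$. Then $V \cap \Omega_i$ is a clopen neighborhood of $y$ contained in $V$, and by the characterization $(iii)$ of \Cref{prop:stable} (stability passes to clopen sub-neighborhoods of the distinguished point: if $U$ is stable with respect to $y$ and $U' \subset U$ is a clopen neighborhood of $y$, then $U' \homeo \bigsqcup_{k \geq M} Y_k$ for suitable $M$, which again satisfies $(iii)$), the set $V \cap \Omega_i$ is a stable neighborhood of $y$. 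But this only shows $\Omega_i$ \emph{contains} a stable neighborhood of one of its points, not that $\Omega_i$ \emph{itself} is one. To upgrade, I note that in fact we may choose the original cover more carefully: shrink each $\Omega_{x_i}$ at the outset so that it is stable with respect to $x_i$, and refine the finite subcover so that distinct members have distinct centers; then after disjointifying, for each surviving $\Omega_i$ the center $x_i$ still lies in $\Omega_i$ (since we only removed sets $\Omega_{x_j}$ with $j < i$ and, by reindexing, we can arrange $x_i \notin \Omega_{x_j}$ for $j<i$ — this requires the centers to be chosen in a ``maximal'' order, or equivalently one applies the shrinking argument to $\Omega_i := \Omega_{x_i}$ intersected with a stable neighborhood and observes $x_i$ survives), and $\Omega_i \subset \Omega_{x_i}$ is a clopen sub-neighborhood of $x_i$, hence stable with respect to $x_i$ by the remark above.

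The cleanest way to package this, and the route I would actually write, is: for each $i$ from $1$ to $N$, the set $\Omega_i$ is clopen and nonempty; pick $y_i \in \Omega_i$ arbitrarily; since $X$ is stable, $y_i$ has a stable neighborhood in $X$, and intersecting with $\Omega_i$ we may assume it is contained in $\Omega_i$. Now $\Omega_i$ is a clopen set containing the stable neighborhood $N_i$ of $y_i$ — but a clopen \emph{superset} of a stable neighborhood need not be stable. So genuinely the shrinking-before-disjointifying approach is needed; the main (minor) obstacle is this bookkeeping to ensure each piece of the partition is a stable neighborhood of one of \emph{its} points, not merely contains one. Concretely: first replace each $\Omega_{x_i}$ by a clopen subset that is stable with respect to $x_i$ (possible by definition); then among these finitely many sets, repeatedly discard any one contained in the union of the others, so we get an irredundant finite cover; finally disjointify in an order such that each center $x_i$ is not in any earlier set (such an order exists because the cover is irredundant: $x_i \in \Omega_{x_i}$ and if $x_i \in \Omega_{x_j}$ for all relevant $j$ we could have discarded $\Omega_{x_i}$ — more carefully, one orders so that $x_i \notin \bigcup_{j \ne i} \Omega_{x_j}$ fails only in a way that still lets the construction go through). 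Once each $\Omega_i$ is a clopen neighborhood of $x_i$ inside the stable neighborhood $\Omega_{x_i}$, \Cref{prop:stable}$(iii)$ (or $(ii)$, applied with $U = \Omega_{x_i}$, $U' = \Omega_i$, giving a homeomorphism $\Omega_{x_i} \to \Omega'' \subset \Omega_i$ fixing $x_i$, whence $\Omega_i$ is stable with respect to $x_i$ by the equivalence $(i) \Leftrightarrow (ii)$ composed back) shows $\Omega_i$ is stable with respect to $x_i$, completing the proof.
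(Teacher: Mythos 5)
Your overall strategy (cover by stable clopen neighborhoods with distinct centers, take a finite subcover, disjointify using that stability passes to clopen sub-neighborhoods of the distinguished point) is the same as the paper's. You also correctly identify the pivot: naive disjointification $\Omega_i = \Omega_{x_i}\setminus(\Omega_{x_1}\cup\cdots\cup\Omega_{x_{i-1}})$ can delete the center $x_i$, and you correctly note that once $\Omega_i$ is a clopen neighborhood of $x_i$ inside the stable $\Omega_{x_i}$, it is automatically stable with respect to $x_i$.

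The gap is in your fix. You want to choose an order on the sets so that each center $x_i$ avoids all earlier sets, and you claim this is possible once the cover is irredundant. That is false. Irredundance gives each $\Omega_{x_i}$ some private point, but not that its \emph{center} is private. Concretely, take an irredundant cover by two sets $T_1, T_2$ with $T_1\cap T_2\neq\emptyset$, $x_1\in T_2$, and $x_2\in T_1$ (e.g.\ on the Cantor set this is easy to arrange). Then whichever of $T_1, T_2$ you process first, the other one loses its center after the subtraction. The hedged parenthetical in your write-up (``fails only in a way that still lets the construction go through'') is exactly the unresolved case, and you cannot escape it by re-choosing centers either, since $\Omega_{x_i}$ need not be stable with respect to the witness point $p_i$ of irredundance (stability at a point does not propagate to other points of the same neighborhood; cf.\ $\omega+1$ at an isolated point).

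The paper handles the problematic case directly rather than by ordering: if $x_1, x_2\in T_1\cap T_2$, it writes $T_1\cap T_2 = A_1\sqcup A_2$ with $A_i$ clopen and $x_i\in A_i$ (possible since $x_1\neq x_2$ in a Stone space), then replaces $T_1$ by $T_1\setminus A_2$ and $T_2$ by $T_2\setminus A_1$. Both replacements keep their respective centers, remain clopen sub-neighborhoods (hence stable), are now disjoint on the former overlap, and have the same union as before. Iterating over pairs terminates. This symmetric split of the overlap is the idea missing from your proposal; with it, the ordering argument is unnecessary.
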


\begin{proof}
  Using compactness, cover $X$ by finitely many clopen sets
  $T_1,\cdots,T_N$ so that each $T_i$ is a stable neighborhood of a
  point $x_i\in T_i$, with all $x_i$ distinct from each other. The
  sets $T_i$ may not be disjoint. We perform the following
  operation to make them disjoint. Suppose $T_1\cap
  T_2\neq\emptyset$. If $x_1\not\in T_2$, then replace $T_1$ with
  $T_1\smallsetminus T_2$; this space is also stable with respect
  to $x_1$, as stability passes to sub-neighborhoods. Similarly, if $x_2\not\in
  T_1$ we can replace $T_2$ by $T_2\smallsetminus T_1$. Now suppose
  $x_1,x_2$ are both in $T_1\cap T_2$. Write $T_1\cap T_2=A_1\sqcup
  A_2$ where $A_i$ are clopen and $x_i\in A_i$. Then replace $T_1$
  with $T_1\smallsetminus A_2$ and $T_2$ with $T_2\smallsetminus
  A_1$. Again both are stable neighborhoods of the same
  points. Thus in all cases we replaced $T_1,T_2$ with two disjoint
  stable neighborhoods of the same points and with the same
  union as $T_1$ and $T_2$. Continuing in this way produces the
  desired partition.
\end{proof}

We will need a slight extension of Lemmas \ref{lem:diagonal},
\ref{lem:diagonal2}, \ref{lem:pigeonhole} and Theorem
\ref{thm:globalpointed}.

Suppose we have a finite collection of distinct points $x_i\in X$,
$i=1,\cdots,N$ and pairwise disjoint clopen subsets $\Omega_i$, $i=1,\cdots,N$
so that $\Omega_i$ is a stable neighborhood of $x_i$ for all $i$. We will
abbreviate this by $\vec \O$ and $\vec x\in\vec \O$. A {\it multibrick} $\vec\cB$
is a collection of bricks $\cB_i$ in each $\O_i$ centered at $x_i$. The group $\Homeo(\vec
\O)$ is the group of homeomorphisms of $\sqcup \O_i$ preserving each
$\O_i$, and $\Homeo(\vec \O,\vec x)$ is the subgroup that in addition
preserves each $x_i$.

\begin{prop}\label{omnibus}
  Lemmas \ref{lem:diagonal},
\ref{lem:diagonal2}, \ref{lem:pigeonhole} and Theorem
\ref{thm:globalpointed} hold when $G(\O)$, $\Homeo(\O,x)$ and bricks
are replaced with $\Homeo(\vec \O)$, $\Homeo(\vec \O,\vec x)$ and
multibricks.
\end{prop}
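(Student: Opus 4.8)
The statement (Proposition \ref{omnibus}) asks that Lemmas \ref{lem:diagonal}, \ref{lem:diagonal2}, \ref{lem:pigeonhole} and Theorem \ref{thm:globalpointed} all go through verbatim when one works with a finite disjoint union $\vec\O = \sqcup_i \O_i$ of stable neighborhoods instead of a single one, using multibricks in place of bricks. The strategy I would take is to observe that the entire machinery developed in Steps 1–5 was phrased in terms of three structural facts about bricks in a single stable neighborhood, and that each of these facts has an immediate "vectorized" analog. So rather than re-proving everything, I would first isolate exactly which properties were used, verify each one for multibricks, and then note that the proofs of the four cited results are literally unchanged once those properties are in hand.

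**Key steps.** First, I'd record the basic combinatorial facts about multibricks. A multibrick $\vec\cB = (\cB_i)$ decomposes coordinatewise, $\cB_i = \sqcup_k Y^{(i)}_{k}$ using the decomposition of each $\O_i$ from \Cref{prop:stable}(iii); a sub-multibrick is a coordinatewise subbrick; and since all bricks in $\O_i$ are homeomorphic (the "furthermore" clause of \Cref{prop:stable}(iii)), any two multibricks $\vec\cB, \vec\cB'$ are homeomorphic via a homeomorphism of $\sqcup\O_i$ preserving each $\O_i$ and each $x_i$ — just take the disjoint union of the coordinatewise homeomorphisms. Second, I'd state the multibrick shift lemma: the analog of \Cref{lem:shift} holds for $\Homeo(\vec\O,\vec x)$, obtained by running \Cref{lem:shift} in each coordinate separately and taking the disjoint union; this gives $\sigma \in \Homeo(\vec\O,\vec x)$ with $\{\sigma^i(\vec\cB)\}_{i\in\Z}$ pairwise disjoint multibricks. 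Third, I'd note the analog of \Cref{prop:axiomstar}: an embedding of $\vec Y[n] = \sqcup_i Y^{(i)}[n]$ into $\vec Y[m]$ onto a clopen subset that respects the coordinate partition extends to a self-homeomorphism of $\vec Y[p]$ — again by working coordinatewise (here one uses that a coordinate-preserving clopen embedding splits as a product of clopen embeddings $Y^{(i)}[n]\to Y^{(i)}[m]$). With these three facts, I would go through Lemmas \ref{lem:diagonal}, \ref{lem:diagonal2}, \ref{lem:pigeonhole} and Theorem \ref{thm:globalpointed} and observe that every step — the diagonal/category argument, the Anderson-trick commutator (\Cref{lem:anderson}), the pigeonhole with uncountably many almost-disjoint subfamilies $\Lambda_\alpha$, and the final fragmentation count — uses only: (a) all (multi)bricks are homeomorphic, (b) the shift map exists, (c) orbits of the base point(s) accumulate to the base point(s), (d) one can fragment and extend maps via \Cref{prop:axiomstar}. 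Since each of (a)–(d) holds coordinatewise and the constructions are just disjoint unions over $i=1,\dots,N$, the proofs carry over with identical constants ($W^2$, $W^8$, $W^{24}$, $W^{96}$).

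**Main obstacle.** The only genuinely non-formal point — the place where "do it in each coordinate" needs a word of justification rather than being literally trivial — is in the pigeonhole lemma (\Cref{lem:pigeonhole}) and in \Cref{lem:fpfragment}/\Cref{lem:brickfragment}: one has to make sure the auxiliary homeomorphisms ($f_\alpha$ interchanging $\cZ_\alpha$ with its complement, $\tilde g\in W^2$, etc.) can be chosen to preserve the coordinate decomposition. For $f_\alpha$ this is fine because $\cZ_\alpha$ and its complement in $(\sqcup\O_i)\smallsetminus\{\vec x\}$ are both multibricks, hence homeomorphic by a coordinate-preserving map. For $\tilde g\in W^2$ the subtlety is that density of $W^2$ is only assumed in $\Homeo(\vec\O)$ (not $\Homeo(\vec\O,\vec x)$) — but this is exactly the same situation as in the single-neighborhood proof of \Cref{lem:pigeonhole}, where $\tilde g$ was also allowed to leave $G(\O)$ and move $x$, so the argument is unchanged. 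I would therefore present the proof as: state the three structural facts for multibricks with one-line coordinatewise proofs, then say "With these in hand, the proofs of Lemmas \ref{lem:diagonal}, \ref{lem:diagonal2}, \ref{lem:pigeonhole} and Theorem \ref{thm:globalpointed} apply verbatim, reading $\Homeo(\vec\O)$ for $G(\O)$, $\Homeo(\vec\O,\vec x)$ for $\Homeo(\O,x)$, and multibrick for brick throughout," flagging the coordinate-preservation of $f_\alpha$ as the one point worth an explicit sentence.
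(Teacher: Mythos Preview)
Your proposal is correct and is exactly the paper's approach: the paper's entire proof is the single sentence ``Proofs remain valid, \emph{mutatis mutandis},'' and what you have written is a careful unpacking of precisely that. If anything, your identification of the coordinatewise structural facts and the one non-formal point in the pigeonhole step is more detailed than what the paper provides.
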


\begin{proof}
  Proofs remain valid, {\it mutatis mutandis}.
\end{proof}

We are finally ready to prove the main theorem of this section.

  \begin{proof}[Proof of \Cref{thm:endsmain}]
    We verify the Steinhaus property. Let $W$ be a symmetric set in
    $G=\Homeo(X)$ so that $G$ is the union of countably many translates
    of $W$. By \Cref{lem:baire}, there is an open
    neighborhood $\cU$ of the identity in $G$ so that $W^2$ is dense in
    it. There is a finite clopen partition $A_1\sqcup\cdots\sqcup A_m$ of $X$
    so that any $g\in G$ that preserves each $A_i$ belongs to
    $\cU$. Now applying \Cref{lem:stablepartition} to each $A_i$
    produces a finer clopen partition $\O_1\sqcup\cdots\sqcup \O_N$ of $X$ where each partition element is
    a stable neighborhood. This finer partition determines an open set
    $\cU' \subset \cU$. Every $g \in \cU'$ thus restricts on each
    subset in the partition to a map on a stable neighborhood. We can
    apply the Proposition \ref{omnibus} version of \Cref{thm:globalpointed} to see that $g \in W^{96}$. 
  \end{proof}

\section{Surfaces}\label{sec:surfaces}

The goal of this section is to prove the classification theorem for stable surfaces as stated in the introduction (recalled here).

\begin{customthm}{A} 
	Let $\S$ be a stable, orientable surface without boundary. The homeomorphism group and mapping class group of $\S$ have automatic continuity if and only if every end of $\S$ is telescoping. That is, every end is one of the following:
	\begin{enumerate}
		\item An isolated puncture,
		\item of Cantor type, or
		\item is not isolated in the space of ends accumulated by genus and is a successor with all predecessors of Cantor type. 
	\end{enumerate}
\end{customthm}

\subsection{Background}\label{ssec:surfacebackground}
We recall some terminology and structural lemmas on end spaces of surfaces introduced in \cite{MR2023}. For a surface $\S$ 
we denote by $E(\S)$ the space of ends of $\S$. This space is defined as 
\begin{align*}
	E(\S) = \varprojlim_{K} \pi_{0}(\S \setminus K),
\end{align*}
where the inverse limit is taken over all connected, compact subsurfaces of $\S$. We denote by 
\[
E_{g}(\S) \subset E(\S)
\] 
the subset of $E(\S)$ consisting on non-planar ends. 
The end space is equipped with the inverse limit topology. This makes it into a second countable Stone space and $E_{g}(\S)$ into a closed subset. 
When the underlying surface is fixed we will often shorten the pair $(E(\S),E_{g}(\S))$ to just $E(\S)$.

Every homeomorphism of $\Sigma$ induces a homeomorphism of $E(\S)$ that preserves $E_{g}(\S)$. 
Hence, when we talk about a homeomorphisms between subsets of $x$, we always assume that our maps 
preserve the set $E_{g}(\S)$. That is, we think points in $E_{g}(\S)$ as being colored following the 
convention of Section~\ref{sec:endspaces}. Furthermore, this pair $(E(\S),E_{g}(\S))$ effectively classifies infinite-type surfaces.

\begin{THM}\label{thm:classification}\cite{Kerekjarto1923,Richards1963}
	Let $\S$ be an orientable surface with finitely many boundary components. Then $\S$ is determined up to homeomorphism by the triple $(g,b,(E(\S),E_{g}(\S)))$ where $g \in \Z_{\geq 0 } \cup \{\infty\}$ is the genus of $\S$, $b \in \Z_{\geq 0}$ is the number of boundary components of $\S$, and the pair $(E(\S),E_{g}(\S))$ is considered up to homeomorphism.  Furthermore, the quotient map $\Homeo(\S) \rightarrow \Homeo(E(\S),E_{g}(\S))$ is a continuous surjection. 
\end{THM}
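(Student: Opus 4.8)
This is the classical Kerékjártó–Richards classification, and the plan is to prove it in the standard way: introduce a concrete ``model surface'' for each admissible triple, show every surface is such a model, show the model depends only on the triple, and then deduce the statement about the quotient map. For an admissible triple $(g,b,(E,E_g))$ I would first build $\S(g,b,(E,E_g))$ explicitly: since $E$ is a second countable Stone space it embeds in the Cantor set and hence in $S^2$ as a closed totally disconnected set, with $E_g\subset E$ a distinguished closed subset; delete $E$ from $S^2$, remove $b$ disjoint open disks with closures disjoint from $E$, and attach handles --- exactly $g$ of them inside a disk disjoint from $E$ when $g<\infty$, and otherwise a locally finite family of handles whose set of accumulation points in $S^2$ is precisely $E_g$. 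The theorem then splits into: (a) every orientable surface with finitely many boundary components is homeomorphic to some $\S(g,b,(E,E_g))$, (b) two models with the same triple are homeomorphic, and (c) the statement about $\Homeo(\S)\to\Homeo(E(\S),E_g(\S))$.

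\textbf{Every surface is a model.} For (a) I would fix a compact exhaustion $K_1\subset K_2\subset\cdots$ of $\S$ by connected subsurfaces with $K_i\subset\mathrm{int}(K_{i+1})$, $\bigcup_i K_i=\S$, arranged so that no component of $\S\setminus K_i$ has compact closure (absorb the relatively compact complementary components into $K_i$). Each block $B_i=\overline{K_{i+1}\setminus K_i}$ is then a compact orientable surface with boundary, determined by its genus and its numbers of boundary circles on the inner and outer sides; the inverse system $\pi_0(\S\setminus K_i)$ computes $E(\S)$, and an end lies in $E_g(\S)$ precisely when every neighborhood of it meets infinitely many of the handles. Working block by block, I would isotope the gluings so that within each $B_i$ all genus is pushed toward the boundary circles leading to ends in $E_g$ and no genus survives near planar ends; this rewrites $\S$ in model form and simultaneously reads off $g$, $b$, and the nested end pair.

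\textbf{Uniqueness of the model.} For (b), given $\S$ and $\S'$ with the same triple, I would fix a homeomorphism $h_E\colon E(\S)\to E(\S')$ carrying $E_g(\S)$ to $E_g(\S')$, fix exhaustions of both surfaces, and construct a homeomorphism $\S\to\S'$ as an increasing union of homeomorphisms of compact pieces by a back-and-forth argument. At each stage one has a homeomorphism between $K_{n_i}\subset\S$ and $K'_{m_i}\subset\S'$ taking boundary circles to boundary circles and inducing (a refinement of) $h_E$ on the complementary components; after possibly enlarging one side, one matches the next layer of complementary components so that corresponding ones carry the same genus content (finite and equal, eventually zero, in the finite-genus case; nonzero exactly for the components whose ends meet $E_g$, in the infinite case), the same number of boundary circles, and homeomorphic end subspaces. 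The classification of compact surfaces with boundary then lets one extend the partial homeomorphism across the next block, and the limit is a homeomorphism since the exhaustions are cofinal.

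\textbf{The quotient map.} For surjectivity, given $\psi\in\Homeo(E(\S),E_g(\S))$ I would run the construction of (b) with $\S'=\S$ and $h_E=\psi$, so that the resulting self-homeomorphism of $\S$ induces $\psi$ on ends; continuity of $\Homeo(\S)\to\Homeo(E(\S),E_g(\S))$ is then straightforward, since $E(\S)$ is a Stone space and a neighborhood basis of the identity in $\Homeo(E(\S),E_g(\S))$ is given by homeomorphisms acting trivially on a finite clopen partition $\pi_0(\S\setminus K)$ for $K$ compact, so two homeomorphisms of $\S$ that agree on such a $K$ induce maps of $E(\S)$ that agree on that partition. The hard part throughout is the back-and-forth step in (b): the bookkeeping that guarantees the partial homeomorphism can always be extended one more block --- keeping infinite genus accumulating on exactly $E_g$, matching the end spaces of corresponding complementary pieces, and treating the finite- versus infinite-genus cases and the boundary-carrying components separately --- is precisely where equality of the full triple is used.
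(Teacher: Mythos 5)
The paper itself does not prove this statement; it is cited to Ker\'ekj\'art\'o and Richards as a classical result, so there is no in-paper proof to compare against. Your sketch is a faithful reconstruction of the standard classification argument: build a model surface for each admissible triple by embedding $(E,E_g)$ as a nested pair of closed sets in $S^2$, deleting $E$, removing $b$ disks, and attaching handles accumulating exactly on $E_g$; put an arbitrary surface into model form via a compact exhaustion in which relatively compact complementary components have been absorbed; establish uniqueness by a back-and-forth argument that extends a partial homeomorphism one block at a time while matching genus content, boundary circles, and complementary end subspaces; and derive surjectivity of $\Homeo(\S)\to\Homeo(E(\S),E_g(\S))$ by running the back-and-forth with a prescribed end homeomorphism. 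This is essentially Richards' argument restricted to the orientable case, and the overall structure is correct.

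One small imprecision in the final continuity argument: the condition ``$f$ agrees with the identity on $K$'' is a closed, not open, subset of $\Homeo(\S)$, so it cannot by itself witness continuity at the identity. The correct formulation is that for a suitable compact $K'\supset K$ (chosen so each component $C$ of $\S\setminus K$ contains exactly one component $C'$ of $\S\setminus K'$ and $\overline{C\setminus C'}$ is compact), the set of $f$ with $f(C')\subset C$ for every $C$ is an open neighborhood of the identity, and every such $f$ induces the trivial permutation of $\pi_0(\S\setminus K)$. This repairs the argument without changing its substance; the rest of the sketch, especially the back-and-forth bookkeeping you flag as the hard part, is correctly outlined.
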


Given a clopen subset $U \subseteq (E(\S),E_{g}(\S))$ we will let
$\S_{U}$ denote a connected subsurface of $\S$, closed in $\S$, with either infinite or zero genus so that 
$\partial \S_{U}$ is a single simple closed curve and the space of ends of $\S_U$, $E(\S_{U})$, is  $U$. Given a neighborhood $U$ of an end $x \in E$, we will say that $\S_{U}$ is a neighborhood of $x$ 
in the surface $\S$. Furthermore, if we have a pair $U \subset V$ we will assume that $\Sigma_{U}$ and $\Sigma_{V}$ are chosen so that $\Sigma_{U} \subset \Sigma_{V}$. Note that $\Sigma_U$ has infinite genus if and only if $U \cap E_{g}(\S) \not = \emptyset$.  
\begin{LEM} 
Given subsurfaces $\S_U$ and $\S_V$, we have $\S_U$ is homeomorphic to $\S_V$ if an only if $U$ is homeomorphic to $V$.
\end{LEM}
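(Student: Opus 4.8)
The plan is to prove both directions by reducing to homeomorphisms of the end spaces and then invoking the classification theorem for surfaces (Theorem~\ref{thm:classification}). One direction is essentially immediate: if $\S_U$ is homeomorphic to $\S_V$, then the induced homeomorphism on end spaces carries $(E(\S_U), E_g(\S_U))$ to $(E(\S_V), E_g(\S_V))$, i.e. $U \homeo V$ as colored spaces. So the content is the converse.

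For the converse, suppose $\phi : U \to V$ is a homeomorphism preserving the coloring, i.e. $\phi(U \cap E_g(\S)) = V \cap E_g(\S)$. I want to promote this to a homeomorphism $\S_U \to \S_V$. By Theorem~\ref{thm:classification}, two surfaces with a single boundary component are homeomorphic if and only if their genus and their colored end spaces agree. The colored end spaces agree by hypothesis. For the genus: recall from the remark just before the lemma that $\S_U$ has infinite genus exactly when $U \cap E_g(\S) \neq \emptyset$, and otherwise genus zero; since $\phi$ preserves the coloring, $U \cap E_g(\S) = \emptyset$ if and only if $V \cap E_g(\S) = \emptyset$, so $\S_U$ and $\S_V$ have the same genus (both $\infty$, or both $0$). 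Each has exactly one boundary component (a single simple closed curve) by construction. Hence all the invariants in the triple $(g, b, (E, E_g))$ match, and Theorem~\ref{thm:classification} gives a homeomorphism $\S_U \to \S_V$.

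The only subtlety — and the one point that needs a word of care — is that the subsurfaces $\S_U$ and $\S_V$ are only specified up to the constraints listed (connected, closed in $\S$, genus $\infty$ or $0$ matching whether the end set meets $E_g$, single boundary curve, end space the prescribed clopen set). The classification theorem is exactly the statement that these constraints pin down the homeomorphism type, so there is no ambiguity: any two choices of $\S_U$ are already homeomorphic to each other. Thus the statement is well-posed and the argument above is complete; there is no real obstacle, the lemma being a direct bookkeeping consequence of Kerékjártó--Richards together with the genus-versus-$E_g$ dictionary.
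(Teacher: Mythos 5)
Your proof is correct and follows the same route as the paper: the forward direction by restricting to end spaces, and the converse by checking that genus (via the $E_g$-intersection dictionary), boundary count, and colored end space all agree, then invoking the Kerékjártó--Richards classification. The closing well-posedness remark is a nice extra observation but is implicit in the paper's treatment.
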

\begin{proof}
A homeomorphism between $\S_U$ and $\S_V$ induces a homeomorphism between their end spaces that 
preserves the sets of planar and non-planar ends. Hence, if $\S_U$ if homeomorphic to $\S_V$ then $U$ is
homeomorphic to $V$. 

In the other direction, as in Section~\ref{sec:endspaces}, any homeomorphism $\phi : U \to V$ is assumed to send the set 
$U \cap E_{g}(\S)$ to $V \cap E_{g}(\S)$.  In particular, $\S_U$ has infinite genus (or genus zero) if and only if $\S_V$ does. 
Since they both have one boundary component, same genus and homeomorphic end spaces, they are 
homeomorphic by the classification of surfaces. 
\end{proof} 

\begin{DEF}\cite[Definition 4.14]{MR2023} \label{stable-surface}
	We say a surface $\S$ is \textbf{stable} if $E(\S)$ is stable as in Definition~\ref{stable}. 
	If $U$ is a stable neighborhood of $x$ in $E(\S)$, then $\S_{U}$ is a \textbf{stable} neighborhood of $x$ in $\S$. 
\end{DEF}

\begin{DEF}\cite[Definition 4.1]{MR2023}
	Let $\preceq$ be the preorder on $E(\S)$ defined by $y \preceq x$ if for every neighborhood $U$ of $x$, there exists a neighborhood $V$ of $y$ and a homeomorphism $f \in \Homeo(S)$ so that $f(V) \subset U$. Write $x \sim y$ if $y \preceq x$ and $x \preceq y$. Recall that, by \cite[Theorem 1.2]{MR2024}, if $x \sim y$ then there is 
	a homeomorphism $\phi :\S \to \S$ such that $\phi(x) = y$. Denote by $E(x)$ the equivalence class of $x$. 
\end{DEF}

\begin{DEF} \label{def:successor} 
       We say $y \prec x$ is $y \preceq x$ and $y \not \sim x$. If $y  \not \preceq x$ and $x \not \preceq y$
       we say $x$ and $y$ are \textbf{incomparable}. 
	An end, $x$, is a \textbf{successor} if there exists finitely many 
	incomparable ends $y_{1},\ldots,y_{n}$, with $n\neq0,$ so that each 
	$y_{i} \prec x$ and if $z \prec x$, then $z \preceq y_{i}$ for some $i$. 
	Each $y_{i}$ is a \textbf{predecessor} of $x$. 
	\end{DEF}

Abusing notation, we denote the partial order on the set of
equivalence classes induced by $\preceq$ again by $\preceq$. 

\begin{PROP}\cite[Proposition 4.7]{MR2023}
	The partial order $\preceq$ has maximal elements. Furthermore, for any maximal element $x$, $E(x)$ is either a finite set of points or a Cantor set. 
\end{PROP}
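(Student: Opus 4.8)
The statement to be proved is \cite[Proposition 4.7]{MR2023}: the preorder $\preceq$ on $E(\S)$ has maximal elements, and every maximal equivalence class $E(x)$ is either finite or a Cantor set. Since the plan is to cite this verbatim from \cite{MR2023}, I will reconstruct the argument rather than re-derive the entire theory. The existence of maximal elements is essentially a compactness/Zorn-type statement, but one has to be careful because $\preceq$ is only a preorder on a space that need not be well-founded in the naive sense. The classification of maximal classes then follows from the observation that a maximal class is a \emph{self-similar} closed subset of a second countable Stone space, together with the Cantor--Bendixson analysis of such sets.

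\textbf{Step 1: Existence of maximal elements.} First I would pass to the quotient poset $E(\S)/{\sim}$. The key input is that $E(\S)$ is a second countable Stone space, so it is compact and metrizable; moreover each downward-closed ``principal'' piece behaves well. I would argue that any chain $x_1 \succ x_2 \succ \cdots$ (or more precisely any chain in the quotient) must stabilize, or else use a direct compactness argument: take a $\preceq$-increasing net of ends and extract a convergent subnet, whose limit dominates all of them. Concretely, one shows that if $\{x_\alpha\}$ is a $\preceq$-chain, then by compactness of $E(\S)$ the $x_\alpha$ accumulate somewhere, and using the fact that $y \preceq x$ whenever $y$ lies in every neighborhood-closure of $x$-like ends, the accumulation point is an upper bound. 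Since $E(\S)$ is second countable one only needs to handle countable chains, and Zorn's lemma (or just König-type reasoning on the countable basis) then yields a maximal element. The precise bookkeeping here is where I expect the subtlety to lie: one must ensure the upper bound constructed is genuinely $\succeq$ each $x_\alpha$ and not merely comparable.

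\textbf{Step 2: A maximal class is self-similar.} If $x$ is maximal, I claim $E(x)$ is a ``self-similar'' clopen-homogeneous closed set: for every end $y \in E(x)$ and every neighborhood $U$ of $y$ in $E(\S)$, the set $U \cap E(x)$ contains a homeomorphic copy of a neighborhood basis element of $E(x)$ — intuitively because maximality forbids any end in a small neighborhood of $y$ from being strictly below $y$, so the ``local picture'' near $y$ inside $E(x)$ is forced. More carefully: maximality of $x$ together with the definition of $\preceq$ forces every end near $y$ that is comparable to $y$ to be equivalent to it, which pins down $E(x)\cap U$ up to the structure of a Stone space in which every point looks the same. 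This is the heart of the matter and the step I expect to be the main obstacle, since it requires carefully unwinding the definition of the preorder and using that $\sim$-classes are realized by global homeomorphisms (the cited \cite[Theorem 1.2]{MR2024}).

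\textbf{Step 3: Cantor--Bendixson dichotomy.} Finally, $E(x)$ is a closed, hence compact, subset of a second countable Stone space, so it is itself a second countable Stone space and has a Cantor--Bendixson decomposition. If $E(x)$ has an isolated point, then by the homogeneity established in Step 2 \emph{every} point of $E(x)$ is isolated, so $E(x)$ is discrete; being compact it is finite. If $E(x)$ has no isolated point, it is a nonempty compact perfect totally disconnected metrizable space, hence a Cantor set by Brouwer's characterization. That gives exactly the two alternatives. I would present Steps 2--3 together as the classification part and keep Step 1 separate, flagging that full details appear in \cite{MR2023}.
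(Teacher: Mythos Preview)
The paper does not prove this proposition; it is quoted verbatim from \cite{MR2023}, so there is no in-paper argument to compare your reconstruction against. Your outline is correct and is the standard argument: Zorn with compactness of $E(\S)$ supplying upper bounds for chains, then closure plus homogeneity of the maximal class feeding into Brouwer's characterization of the Cantor set.

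Two small cleanups. First, you should make explicit the one-line proof that $E(x)$ is closed when $x$ is maximal, since Step~3 relies on it: if $y \in \overline{E(x)}$ then every clopen neighborhood of $y$ contains some $z \in E(x)$, and $x \preceq z$ together with $z \in U$ gives a copy of a neighborhood of $x$ inside $U$; thus $x \preceq y$, and maximality forces $y \in E(x)$. Second, your Step~2 is more elaborate than needed and you do not need \cite[Theorem 1.2]{MR2024}. For the dichotomy it suffices to know that if some $y \in E(x)$ is isolated in $E(x)$ then every $z \in E(x)$ is: take $U$ with $U \cap E(x) = \{y\}$; since $z \preceq y$ there exist $f \in \Homeo(\S)$ and a neighborhood $V$ of $z$ with $f(V) \subset U$; as $f$ preserves $\sim$-classes, $f(V \cap E(x)) \subset U \cap E(x) = \{y\}$, whence $V \cap E(x) = \{z\}$ by injectivity of $f$. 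This sidesteps the ``self-similar'' language entirely.
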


We say that an end $y$ is of \textbf{Cantor type} if there exists some neighborhood $U$ of $y$ such that $E(y) \cap U$ is homeomorphic to a Cantor set.
Note that $E(y)$ may not be a Cantor set if $y$ is not maximal. For example, it is possible that only a finite number of points in 
$\overline E(y)$ are non-planar. However:

\begin{LEM} \label{lem:Cantor-type}
If $\S$ is stable, then, for every end $y$ of $\S$, the set $E(y)$ is locally closed. 
In particular, an end $y$ is of Cantor type if and only if $E(y)$ is uncountable. 
\end{LEM}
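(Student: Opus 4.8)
The plan is to leverage the characterization of stability in property $(iii)$ of \Cref{prop:stable}, applied to a stable neighborhood of $y$, together with the fact that the equivalence class $E(z)$ of a \emph{maximal} end $z$ is either finite or a Cantor set. The statement has two parts: that $E(y)$ is locally closed, and that $E(y)$ is uncountable if and only if $y$ is of Cantor type. I will do the first part first, as the second follows quickly from it.

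\emph{Local closedness.} First I would fix a stable neighborhood $U$ of $y$ and use the decomposition $U\setminus\{y\}=\bigsqcup_{k}Y_k$ with $Y_k\searrow y$ and $Y_{k+1}$ containing a homeomorphic copy of $Y_k$. The key observation is that $E(y)\cap U$ should be describable in terms of which points of $U$ are ``$\sim$-equivalent to $y$ within $U$.'' Since any homeomorphism realizing $z\sim y$ can, after shrinking, be taken to be supported near the relevant ends (using $U$ stable and the decomposition to absorb the rest), a point $z\in U$ lies in $E(y)$ iff $z$ has arbitrarily small neighborhoods homeomorphic to arbitrarily small neighborhoods of $y$. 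I would then argue that the set of such $z$ is open in its closure: if $z_n\to z$ with all $z_n\in E(y)$, then each $z_n$ has a neighborhood $U_n$ with $\S_{U_n}\homeo \S_U$; passing to a subsequence so the $U_n$ are disjoint and using $Y_k\searrow y$, one builds a neighborhood of $z$ whose end space is a countable disjoint union of copies of (cofinal pieces of) $U$, which by the ``furthermore'' clause \eqref{Eq:sub-sequence} is again homeomorphic to $U\setminus\{y\}$ together with the point $z$ — hence $z\in E(y)$, provided $z$ already has \emph{some} neighborhood contained in $\overline{E(y)}$, i.e.\ $E(y)$ is closed in a neighborhood of each of its points. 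Concretely: $\overline{E(y)}\setminus E(y)$ has no accumulation points inside $E(y)$, which is exactly local closedness.

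\emph{Cantor type iff uncountable.} If $y$ is of Cantor type then $E(y)\cap U$ is homeomorphic to a Cantor set for some $U$, hence uncountable. Conversely, suppose $E(y)$ is uncountable. Since $E(y)$ is locally closed, $E(y)\cap U$ is closed in some open set, hence is a locally compact, second countable, totally disconnected space; an uncountable such space contains a Cantor set, and in fact by a Cantor--Bendixson argument has nonempty perfect kernel. But I would instead pass to a maximal end: there is a maximal $z$ with $y\preceq z$, and one shows $E(y)\cap U$ is locally homeomorphic to a clopen piece of $E(z)$ (using that $\preceq$-below-maximal behaviour is controlled — the partial order has finitely many maximal classes by \Cref{prop:stablefinmax}, and stability forces orbits to be locally Cantor or discrete via \Cref{lem:Cantor-type}'s surrounding lemmas). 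Since $E(z)$ is either finite or a Cantor set and $E(y)\cap U$ is uncountable, this piece must be a Cantor set, so $y$ is of Cantor type. Alternatively, and more self-containedly: an uncountable locally closed subset of a second countable Stone space has a perfect subset, and homogeneity of $E(y)$ under $\Homeo(\S)$ (via \cite[Theorem 1.2]{MR2024}) together with local closedness upgrades ``contains a Cantor set'' to ``is locally a Cantor set.''

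\emph{Main obstacle.} The hard part will be the local closedness statement — specifically, showing that a limit $z$ of points in $E(y)$ again lies in $E(y)$, rather than merely in $\overline{E(y)}$. This requires assembling a neighborhood of $z$ from disjoint neighborhoods of the approximating $z_n$ plus a controlled ``tail'' toward $z$, and recognizing the resulting end space as homeomorphic to a neighborhood of $y$ via the subsequence-invariance \eqref{Eq:sub-sequence}; the bookkeeping of planar vs.\ non-planar ends (the coloring) must be tracked throughout, since $\S_U$ having infinite genus is equivalent to $U\cap E_g(\S)\neq\emptyset$, and this is where unstable surfaces genuinely fail.
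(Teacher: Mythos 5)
Your proposal goes off-target on the local closedness step, and that is the crux of the lemma. You frame the ``hard part'' as showing that a limit $z$ of points $z_n\in E(y)$ again lies in $E(y)$. That is the claim that $E(y)$ is \emph{closed}, not locally closed, and it is false in general: $E(y)$ can accumulate onto a strictly $\succ$-larger type (a point of $\overline{E(y)}\setminus E(y)$). The proviso you add (``provided $z$ already has some neighborhood contained in $\overline{E(y)}$'') does not repair the logic; local closedness says precisely that no point \emph{of $E(y)$} is a limit of points of $\overline{E(y)}\setminus E(y)$, and your construction approaches the problem from the wrong side. There is also a gap inside your construction: even granting that you can assemble a neighborhood of $z$ homeomorphic to a stable neighborhood $U$ of $y$, a homeomorphism between end-space neighborhoods does not by itself give $z\sim y$ — you would still need the homeomorphism to be realized by an element of $\Homeo(\S)$ taking $z$ to a point of $E(y)$, which you do not address.

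The paper's argument avoids all of this by invoking a structural fact about stable neighborhoods (essentially \Cref{lem:MRstable}, from \cite{MR2023}): if $V$ is a stable neighborhood of $y$, then every $z\in V$ satisfies $z\preceq y$. Consequently, for $z\in V$ either $z\sim y$ (so $z\in E(y)$), or $z\prec y$ strictly, in which case $y\not\preceq z$; but any point of $\overline{E(y)}$ satisfies $y\preceq\,\cdot\,$ (since limits of $E(y)$-points inherit $y\preceq$), so $z\notin\overline{E(y)}$. Thus $\overline{E(y)}\cap V=E(y)\cap V$, which is closed in $V$. Applying this to a stable neighborhood of each $y'\in E(y)$ gives local closedness with no decomposition bookkeeping at all. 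For the second assertion, your third alternative (perfect kernel plus homogeneity plus Brouwer) is in the right spirit, but the paper is cleaner still: since $E(y)$ is uncountable and locally closed in a compact space, some $y'\in E(y)$ is an accumulation point of $E(y)$; transitivity of $\Homeo(\S)$ on $E(y)$ (via \cite[Theorem 1.2]{MR2024}) then makes every point of $E(y)\cap V$ an accumulation point, so $E(y)\cap V$ is compact, perfect, metrizable, and totally disconnected, hence a Cantor set. Your detour through maximal ends and the claim that $E(y)\cap U$ is ``locally homeomorphic to a clopen piece of $E(z)$'' for a maximal $z\succeq y$ is both unjustified and unnecessary.
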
 

\begin{proof} 
Let $V$ be a stable neighborhood of $y$. 
For every $z \in V$, $E(z)$ intersects every stable neighborhood of $y$,
and hence, $z \preceq y$. This implies that either $z \sim y$ and $z \in E(y)$ 
or $y \not \preceq z$ and $z \not \in \overline{E(y)}$. That is
\[
\overline{E(y)} \cap V = E(y) \cap V
\]
and hence $E(y) \cap V$ is closed. 

To see the second assertion, it is immediate from the definition that if $y$ is of Cantor type, then $E(y)$ is uncountable. In the other direction, since  $E(y)$  is uncountable and  $E(\S)$  is compact, there exists at least 
one point in  $E(y)$  that is an accumulation 
point of  $E(y)$. Moreover, because all stable neighborhoods of points in  $E(y)$ are homeomorphic to one another, 
every point in  $E(y) \cap V$  is an accumulation point of  $E(y)$. This implies that  $E(y) \cap V$  is a perfect set. 
Given that  $E(\S)$  is totally disconnected, $ E(y) \cap V$  forms a Cantor set, which means that  $y$  is of Cantor type.
\end{proof} 

Note that, for a maximal end $x$, saying $x$ is of Cantor type is the same as saying $E(x)$ is a Cantor set. 
This is because, for every point $y \in \overline{E(x)}$, we have $x \preceq y$ and, since $x$ is maximal, $x \preceq y$. 

\begin{PROP}\label{prop:stablefinmax}
	If $\S$ is a stable surface, then $\S$ has only finitely many maximal equivalence classes of ends. 
\end{PROP}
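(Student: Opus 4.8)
The plan is to combine compactness of $E(\S)$ with a fact already isolated inside the proof of \Cref{lem:Cantor-type}: if $V$ is a stable neighborhood of a point $y$, then every $z \in V$ satisfies $z \preceq y$. (Given a neighborhood $U$ of $y$, I may intersect with $V$ and so assume $U \subseteq V$; stability of $V$ provides a homeomorphism $V \to U$ fixing $y$, this homeomorphism of colored end spaces is induced by a homeomorphism of $\S$ supported in $\S_V$, and that homeomorphism carries any small clopen neighborhood of $z$ contained in $V$ into $U$, so $z \preceq y$.)

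With this in hand, I would argue as follows. Because $\S$ is stable, every end $x \in E(\S)$ has a stable neighborhood $V_x$; since $E(\S)$ is a compact Stone space, finitely many of these cover it, say $E(\S) = V_{x_1} \cup \dots \cup V_{x_N}$. Now let $m$ be any $\preceq$-maximal end. Then $m \in V_{x_i}$ for some $i$, so by the fact above $m \preceq x_i$, and since $m$ is maximal we cannot have $m \prec x_i$; hence $m \sim x_i$, that is, $E(m) = E(x_i)$. Therefore every maximal equivalence class of ends occurs among $E(x_1), \dots, E(x_N)$, so there are at most $N$ of them.

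The step I expect to require the most care is the parenthetical realization claim in the first paragraph — checking that the homeomorphism of (colored) end spaces produced by stability is genuinely induced by an element of $\Homeo(\S)$, as the definition of $\preceq$ demands. This is exactly the input already used in the proof of \Cref{lem:Cantor-type}, and it follows from the classification of surfaces (\Cref{thm:classification}) together with the lemma stating that $\S_U$ and $\S_V$ are homeomorphic if and only if $U$ and $V$ are; alternatively one can simply quote \cite{MR2023}. Everything else is an elementary compactness argument, so I do not anticipate any real obstacle.
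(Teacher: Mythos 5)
Your overall argument is correct and is essentially the same as the paper's: cover $E(\S)$ by finitely many stable neighborhoods (compactness) and observe that each contributes at most one maximal equivalence class. The paper achieves the second step by quoting Lemma~\ref{lem:MRstable} (from \cite{MR2023}); you instead re-derive the needed fact from the observation in the proof of Lemma~\ref{lem:Cantor-type}, namely that every $z$ in a stable neighborhood $V$ of $y$ satisfies $z \preceq y$. These are interchangeable, and your deduction from that fact to the conclusion is clean.

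One caveat about the parenthetical sketch: it is not true that the stability homeomorphism $\Phi \colon V \to U$ is ``induced by a homeomorphism of $\S$ supported in $\S_V$.'' A self-homeomorphism of $\S$ supported in $\S_V$ restricts to a self-bijection of $V$, so it cannot equal $\Phi$ when $U \subsetneq V$. What the argument in Lemma~\ref{lem:Cantor-type} (and \cite{MR2023}) actually uses is subtler: shrink $V'$ to a clopen neighborhood of $z$ with $V' \cap \Phi(V') = \emptyset$ and $y \notin V'$, note $V'$ and $\Phi(V') \subset U$ are colored-homeomorphic, and realize a homeomorphism of $\S$ carrying $V'$ onto (or into) $\Phi(V')$ --- the point being that $\Phi(z) \in E(z)$, so $E(z)$ meets every stable neighborhood of $y$, which yields $z \preceq y$ via the definition of $\preceq$. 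You correctly identify this as the delicate step and offer to quote \cite{MR2023}, which is the safe route; the parenthetical justification as written, however, would not survive scrutiny.
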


This proposition will follow from the following. The version we have written here is not exactly the same as was written in \cite{MR2023}, but follows directly from their statements.

\begin{LEM}\cite[Proposition 4.8 \& Remark 4.15]{MR2023}\label{lem:MRstable}
	If $U$ is a stable neighborhood of $x$, then $U$ has only a single maximal equivalence class of ends. Furthermore, $x$ is a maximal end of $U$ and is either the unique maximal end of $U$ or is of Cantor type. 
\end{LEM}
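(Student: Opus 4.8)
The argument reduces to one core claim, after which the rest is formal bookkeeping. \textbf{Core claim: every end $y\in U$ satisfies $y\preceq x$.} If $U=\{x\}$ the statements are trivial, so assume $U\neq\{x\}$; then condition $(iii)$ of \Cref{prop:stable} provides a decomposition $U\setminus\{x\}=\bigsqcup_{k\ge 1}Y_k$ with infinitely many $Y_k$ nonempty (not all $Y_k$ can be empty, and each $Y_{k+1}$ contains a homeomorphic copy of $Y_k$), so $U$ contains bricks. To prove the claim, fix $y\in U$ with $y\neq x$, choose any brick $\cB\subset U$, and apply \Cref{lem:shift} to obtain a homeomorphism $\sigma:U\to U$ fixing $x$ whose orbit $\{\sigma^i(y)\}_{i\in\Z}$ accumulates at $x$. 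Since $U$ is clopen in $E(\S)$, extending each $\sigma^i$ by the identity off $U$ yields a color-preserving self-homeomorphism of $E(\S)$, which is induced by some $\phi_i\in\Homeo(\S)$ by the surjectivity in \Cref{thm:classification}. Given a neighborhood $W$ of $x$ in $E(\S)$, pick $i$ with $\sigma^i(y)\in W$ and a clopen neighborhood $V\subset U$ of $y$ small enough that $\sigma^i(V)\subset W$; then $\phi_i(V)=\sigma^i(V)\subset W$, so $y\preceq x$.

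Granting the core claim: $x$ is a maximal end of $U$, since $x\prec z$ for some $z\in U$ would give $z\preceq x$, hence (with $x\preceq z$) $x\sim z$, contradicting $x\not\sim z$. Moreover, any maximal end $z$ of $U$ satisfies $z\preceq x$, which forces $z\sim x$ (otherwise $z\prec x$, and $x\in U$ witnesses that $z$ is not maximal). Thus the maximal ends of $U$ comprise the single equivalence class $E(x)\cap U$, establishing the first two assertions of the lemma.

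For the dichotomy, suppose $x$ is not the unique maximal end of $U$; by the previous paragraph there is $y\in U\setminus\{x\}$ with $y\sim x$. I claim $E(x)\cap U$ is a Cantor set, so that $x$ is of Cantor type. It is closed in $E(\S)$ by \Cref{lem:Cantor-type} (the local-closedness argument there uses only that $U$ is stable), hence compact; it is totally disconnected and metrizable as a subspace of $E(\S)$; and it is nonempty. It remains to show it has no isolated points. First, $x$ is not isolated in $E(x)\cap U$: applying \Cref{lem:shift} as above produces $\sigma:U\to U$ fixing $x$ with $\{\sigma^i(y)\}$ accumulating at $x$, and its extension lies in $\Homeo(\S)$ and fixes $x$, hence permutes the class $E(x)$; so the points $\sigma^i(y)$ lie in $E(x)\cap U$, are all distinct from $x$, and accumulate at $x$. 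Second, for arbitrary $z\in E(x)\cap U$: since $z\sim x$, \cite[Theorem 1.2]{MR2024} gives $\psi\in\Homeo(\S)$ with $\psi(z)=x$, and $\psi$ permutes $E(x)$; applying $\psi^{-1}$ to a sequence in $(E(x)\cap U)\setminus\{x\}$ converging to $x$ yields points of $E(x)$ converging to $z$, distinct from $z$, and (for large index) contained in the open set $U$. So $z$ is not isolated in $E(x)\cap U$, completing the proof.

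The main obstacle is the dichotomy step; the core claim and the maximality bookkeeping are soft. Showing $E(x)\cap U$ is perfect genuinely uses the self-similarity of stable neighborhoods: one first upgrades a single accumulation---the $\sigma$-orbit of $y$---to accumulation at the basepoint $x$, and then transports this to every point of $E(x)\cap U$ via the homogeneity result \cite[Theorem 1.2]{MR2024} together with the homeomorphism-invariance of ``being an accumulation point of $E(x)$''. Keeping the witnessing sequences inside the non-invariant neighborhood $U$ is the reason for invoking the local closedness of \Cref{lem:Cantor-type}.
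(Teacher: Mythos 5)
The paper cites this lemma from \cite{MR2023} (Proposition 4.8 and Remark 4.15) and gives no proof of its own, explicitly noting that the statement as phrased here ``follows directly from their statements''; so there is no in-paper proof to compare against. What you have written is a correct, self-contained derivation from the machinery the paper does develop, and it is a useful fill-in. The core step---every $y\in U$ satisfies $y\preceq x$---is argued by extending the shift of \Cref{lem:shift} by the identity to a colour-preserving homeomorphism of $E(\S)$ and then lifting it to $\Homeo(\S)$ via the surjectivity in \Cref{thm:classification}; this lift is essential because $\preceq$ is defined through $\Homeo(\S)$, not through abstract homeomorphisms of the end space, and you handle it cleanly. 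It is worth noting that the same mechanism is exactly what the paper's own proof of \Cref{lem:Cantor-type} is implicitly invoking when it asserts that ``for every $z\in V$, $E(z)$ intersects every stable neighborhood of $y$, and hence $z\preceq y$,'' so your core claim can be read as supplying the justification that argument leaves terse. The maximality bookkeeping is correct, and the perfectness argument for $E(x)\cap U$---local closedness via \Cref{lem:Cantor-type}, a shift to produce accumulation of $E(x)\cap U$ at the basepoint $x$, and \cite[Theorem 1.2]{MR2024} together with $\Homeo(\S)$-invariance of $\sim$ to transport accumulation to any $z\in E(x)\cap U$, using openness of $U$ to keep tails inside---is sound. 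Two cosmetic remarks: fixing a brick $\cB$ is only needed to quote \Cref{lem:shift} verbatim (all you actually use is the accumulation of $\sigma$-orbits at $x$); and your closing sentence slightly misattributes the role of local closedness, which is used to get compactness of $E(x)\cap U$, while ``keeping witnesses inside $U$'' comes from $U$ being open.
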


\begin{proof}[Proof of \Cref{prop:stablefinmax}]
	Let $\S$ be a stable surface. Using compactness of the end space of $\S$ we can cover $E(\S)$ by a finite collection of stable neighborhoods $U_{1},\ldots,U_{n}$. Now, by the previous lemma, each $U_{i}$ has only a single type of maximal end. Furthermore, each maximal end in $E(\S)$ must appear as a maximal end of some $U_{i}$. Therefore we see that $n$ is an upper bound on the number of maximal equivalence classes of ends.
\end{proof}

Next, we recall the following.

\begin{LEM}\cite[Lemma 4.18]{MR2023} \label{lem:Uy}
Let $x, y \in  E(\S),$ and assume $x$ has a stable neighborhood $V_x$ and that $x$ is an accumulation point of $E(y)$. Then 
for any sufficiently small clopen neighborhood $U_y$ of $y$, $U_y \cup V_x$ is homeomorphic to $V_x$.
\end{LEM}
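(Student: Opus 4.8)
\textbf{Proof plan for \Cref{lem:Uy}.}

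The plan is to set up a concrete model of a neighborhood of $y$ and exploit stability of $V_x$ together with Property $(iii)$ of \Cref{prop:stable}. First I would fix a stable neighborhood $V_x$ of $x$ with its decomposition $V_x \setminus \{x\} = \bigsqcup_{k=1}^\infty Y_k$, where the $Y_k$ are clopen, pairwise disjoint, $Y_k \searrow x$, and each $Y_{k+1}$ contains a homeomorphic copy of $Y_k$. Since $x$ is an accumulation point of $E(y)$, and all stable neighborhoods of points in $E(y)$ are homeomorphic (using \Cref{lem:Cantor-type}, stability ensures $E(y)$ is locally closed and $E(y)\cap V_x$ is perfect if $y$ is of Cantor type, or $x\sim y$), infinitely many of the $Y_k$ must meet $E(y)$; after passing to the subsequence guaranteed by \eqref{Eq:sub-sequence} in \Cref{prop:stable}, I may assume every $Y_k$ contains a point of $E(y)$, hence — shrinking if necessary and using that points of $E(y)$ have homeomorphic stable neighborhoods — that each $Y_k$ contains a clopen copy of some fixed small stable neighborhood $U_y^0$ of $y$.

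Next I would choose $U_y$ sufficiently small that $U_y$ is a stable neighborhood of $y$ and $U_y$ embeds as a clopen subset of $Y_1$ (this is the precise meaning of "sufficiently small" in the statement). The goal is then to show $U_y \sqcup V_x \homeo V_x$, i.e. $U_y \sqcup \{x\} \sqcup \bigsqcup_k Y_k \homeo \{x\} \sqcup \bigsqcup_k Y_k$. Since $U_y$ is clopen in $Y_1$, write $Y_1 \homeo U_y \sqcup R$ for some clopen $R$. Now I would build a homeomorphism by an absorption/shift argument exactly in the spirit of \Cref{lem:puzzle} and \Cref{prop:axiomstar}: using that each $Y_{k+1}$ contains a copy of $Y_k$ (so $Y_{k+1}\homeo Y_k \sqcup V_{k+1}$ in the notation of the proof of \Cref{prop:stable}), one telescopes the extra $U_y$ down the sequence. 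Concretely, $U_y \sqcup Y_1 \sqcup Y_2 \sqcup \cdots$ can be rearranged: absorb the copy of $U_y$ into $Y_2$ (which contains a copy of $Y_1 \supset U_y$), and so on, producing the same $V_j$-decomposition as in \eqref{Eq:W}; the resulting bijection of pieces is continuous because every relevant family descends to $x$. This is formally the statement that adding one more clopen "summand" that embeds into a $Y_k$ does not change the homeomorphism type, which is the same Eilenberg–Mazur-type cancellation used throughout Section~\ref{sec:endspaces}.

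The main obstacle is bookkeeping the basepoint and the coloring: the homeomorphism $U_y\cup V_x \to V_x$ must fix (or at least be defined consistently at) the point $x$, send the accumulation structure of $E(y)$ correctly, and respect $E_g(\S)$. The point $x$ is handled automatically since all the $Y_k$ and their rearrangements descend to $x$, so extending by $x\mapsto x$ is continuous; the coloring is handled because $U_y$ and each copy of $U_y^0$ inside $Y_k$ are color-preservingly homeomorphic by the standing convention, so the pieces $V_j$ can be matched color-preservingly. The only genuine subtlety is ensuring $U_y$ really embeds as a clopen subset of $Y_1$: this is where "sufficiently small" is used, together with the fact (\Cref{lem:Cantor-type} and the homogeneity of stable neighborhoods of points of $E(y)$) that arbitrarily small clopen neighborhoods of $y$ are mutually homeomorphic, so one of them fits inside the copy of $U_y^0\subset Y_1$. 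Once that embedding is in hand, the rearrangement argument is routine and the proof concludes.
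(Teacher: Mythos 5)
The paper does not include a proof of this statement; it is cited verbatim from \cite[Lemma 4.18]{MR2023}, so there is no internal proof to compare against. Judged on its own merits, your plan is sound in outline: fix the decomposition $V_x\setminus\{x\}=\bigsqcup_k Y_k$ from Proposition~\ref{prop:stable}(iii), embed $U_y$ as a clopen subset of some $Y_{k_0}$, use the nesting $Y_{k+1}\supset(\text{copy of }Y_k)$ to produce a clopen copy $U_y^{(k)}\subset Y_k$ of $U_y$ for every $k\geq k_0$, and then define the absorption by shifting $U_y\mapsto U_y^{(k_0)}$, $U_y^{(k)}\mapsto U_y^{(k+1)}$, identity elsewhere, $x\mapsto x$; continuity at $x$ holds because $U_y^{(k)}\subset Y_k\searrow x$. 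That is a correct proof.

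The one genuine flaw is that you repeatedly invoke stability of $y$ (or equivalently of points of $E(y)$) — e.g.\ ``some fixed small stable neighborhood $U_y^0$'', ``$U_y$ is a stable neighborhood of $y$'', ``arbitrarily small clopen neighborhoods of $y$ are mutually homeomorphic''. The lemma only assumes that $x$ is stable; nothing is assumed about $y$. This assumption is in fact unnecessary: to get the initial embedding $U_y\hookrightarrow Y_{k_0}$ you only need that some $Y_{k_0}$ meets $E(y)$ (which follows because $x$ accumulates $E(y)$ and $Y_k\searrow x$), and then a single ambient homeomorphism $\phi\in\Homeo(\S)$ with $\phi(y)=y'\in Y_{k_0}\cap E(y)$ from \cite[Theorem 1.2]{MR2024}; taking $U_y$ small enough that $\phi(U_y)\subset Y_{k_0}$ finishes the embedding. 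After that, the ``each $Y_k$ contains a copy of $U_y^0$'' step you derive from $E(y)$ meeting every $Y_k$ is superfluous — the nesting $Y_{k+1}\supset(\text{copy of }Y_k)$ already gives a copy of $U_y$ in every $Y_k$ with $k\geq k_0$, which is all the shift requires. Dropping the stability-of-$y$ detour both removes the unjustified hypothesis and streamlines the argument. Aside from that, the coloring and basepoint bookkeeping you flag are handled correctly.
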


We restate this lemma slightly. 

\begin{COR} \label{cor:accumulate}
Let $\S$ be a stable surface and $V \subset E(\S)$ be a clopen set such that, for every $y \in (E\setminus V)$, $E(y)$ has an accumulation 
point in $V$. Then $V$ is homeomorphic to $E(\S)$. 
\end{COR}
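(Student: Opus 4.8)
\textbf{Proof plan for Corollary~\ref{cor:accumulate}.}
The plan is to deduce this from \Cref{lem:Uy} by a compactness argument. Write $W = E(\S) \setminus V$, a clopen set. For each $y \in W$ we want to find a small clopen neighborhood of $y$ that can be ``absorbed'' into $V$ without changing the homeomorphism type. The hypothesis says $E(y)$ has an accumulation point $x_y \in V$. Since $\S$ is stable, $x_y$ has a stable neighborhood $V_{x_y}$, which we may take inside $V$ (stability passes to sub-neighborhoods, as used in \Cref{lem:stablepartition}). By \Cref{lem:Uy}, for all sufficiently small clopen neighborhoods $U_y$ of $y$ we have $U_y \cup V_{x_y} \homeo V_{x_y}$; fix such a $U_y$, and moreover shrink it so that $U_y \subset W$ and so that $U_y$ is disjoint from the (finitely many, once we pass to a finite subcover) chosen neighborhoods $V_{x_y}$ — actually it is cleaner to first choose all the data and then disjointify, see below.

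The key steps, in order: (1) For each $y \in W$, pick an accumulation point $x_y \in V$ of $E(y)$ and a stable neighborhood $V_{x_y} \subset V$ of $x_y$, then pick $U_y \subset W$ a clopen neighborhood of $y$ small enough that $U_y \cup V_{x_y} \homeo V_{x_y}$. (2) By compactness of $W$, extract a finite subcover $U_{y_1}, \dots, U_{y_r}$ of $W$. (3) Disjointify: replace the $U_{y_i}$ by pairwise disjoint clopen sets $U_1, \dots, U_r$ with $U_i \subseteq U_{y_i}$ and $\bigsqcup_i U_i = W$ (take $U_i = U_{y_i} \setminus \bigcup_{j<i} U_{y_j}$); note $U_i \cup V_{x_{y_i}} \homeo V_{x_{y_i}}$ still holds because absorbing a clopen subset of $U_{y_i}$ is no worse — more carefully, $U_i \subset U_{y_i}$ clopen and $U_{y_i} \cup V_{x_{y_i}} \homeo V_{x_{y_i}}$ implies $U_i \cup V_{x_{y_i}} \homeo V_{x_{y_i}}$ since $U_i \cup V_{x_{y_i}}$ is a clopen subset of $U_{y_i} \cup V_{x_{y_i}}$ containing $V_{x_{y_i}}$, hence sandwiched between $V_{x_{y_i}}$ and a homeomorphic copy of $V_{x_{y_i}}$, so stability (part (i) of \Cref{prop:stable}) forces it to be homeomorphic to $V_{x_{y_i}}$. (4) Absorb the pieces one at a time: starting from $V$, absorb $U_1$ using the homeomorphism $U_1 \cup V_{x_{y_1}} \homeo V_{x_{y_1}}$ extended by the identity outside, obtaining a set $V^{(1)} \homeo V$ with $V^{(1)} \supseteq V \cup U_1$; iterate to absorb $U_2, \dots, U_r$, at each stage using that $V_{x_{y_i}} \subset V \subseteq V^{(i-1)}$ so the local homeomorphism still applies. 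After $r$ steps we have $V \cup W = E(\S) \homeo V$.

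The main obstacle is step (3)–(4): making sure the absorptions can be performed simultaneously / iteratively without the pieces interfering. Two subtleties need care. First, that shrinking $U_{y_i}$ to a clopen subset $U_i$ preserves the absorption property — this is the sandwiching argument above using part (i) of \Cref{prop:stable}. Second, when absorbing $U_{i+1}$ into $V^{(i)}$, one uses $U_{i+1} \cup V_{x_{y_{i+1}}} \homeo V_{x_{y_{i+1}}}$ and extends by the identity on $V^{(i)} \setminus V_{x_{y_{i+1}}}$; this is legitimate because $V_{x_{y_{i+1}}}$ is clopen in $V^{(i)}$ (it was clopen in $V$ and $V$ is clopen in $V^{(i)}$) and the $U_j$ for $j \le i$ are disjoint from $U_{i+1}$, hence the homeomorphism only rearranges points of $U_{i+1} \cup V_{x_{y_{i+1}}}$ and leaves the already-absorbed parts untouched. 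Everything is clopen and the ambient space is a second countable Stone space, so all the bookkeeping is finitary once the compactness reduction is done; the only real input is \Cref{lem:Uy} together with stability. One could alternatively phrase the whole argument via part (iii) of \Cref{prop:stable} directly, but going through \Cref{lem:Uy} is the shortest route since that lemma is exactly tailored to this situation.
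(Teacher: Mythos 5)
Your proposal follows the same route as the paper: apply \Cref{lem:Uy} to each $y\in E\smallsetminus V$, take a finite subcover of the compact set $E\smallsetminus V$, disjointify, and absorb the pieces one at a time into $V$. The only difference is that you spell out the (genuine but easy) subtlety that shrinking a $U_{y_i}$ to make the cover disjoint preserves the absorption property, which the paper dispatches with the phrase ``making these sets smaller''; your sandwiching argument via \Cref{prop:stable}(i) is a valid way to justify this.
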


\begin{proof} 
For every $y \in E\setminus V$, let $x \in V$ be an accumulation point of $E(y)$. 
Let $V_x \subset V$ and $U_y \subset E\setminus V$ be as in Lemma~\ref{lem:Uy}. 
Then 
\[
V \cup U_y = (V\setminus V_x) \cup (V_x \cup U_y) \simeq (V\setminus V_x) \cup V_x \simeq V. 
\]
Since $E\setminus V$ is compact, it can be covered with finitely many such sets 
$U_{y_1}, \dots, U_{y_k}$. Making these sets smaller, we can assume they are disjoint. Then,
from the above argument, we have 
\[
E = (V \sqcup U_{y_1}) \sqcup U_{y_2} \sqcup \dots \sqcup U_{y_k} \simeq
(V \sqcup U_{y_2}) \sqcup  U_{y_3} \sqcup \dots \sqcup U_{y_k} \simeq
\dots \simeq (V \sqcup U_{y_k} ) \simeq V
\]
This finishes the proof.
\end{proof} 

\subsection{Telescoping and Big Annuli}

From now on, we will always be working under the assumption that $\S$ is a stable surface.

\begin{definition}\label{def:telescopingend}
	We say that an end $x$ of $\S$ is \textbf{telescoping} if it is one of the following:
	\begin{enumerate}[(i)]
		\item An isolated puncture,
		\item of Cantor type, or
		\item is not isolated in $E_{g}(\S)$ and is a successor with all predecessors of Cantor type. 
	\end{enumerate}
\end{definition}

\begin{definition}\label{def:bigannuli}
	For an end $x$ of $\S$, an \textbf{annulus around} $x$ is a subsurface $A \subset \S$ with $\partial A = \alpha_{1} \sqcup \alpha_{2}$ 
	such that each $\alpha_{i}$ is a single simple closed curve, $\alpha_1$ is the boundary of $\S_{U_1}$ and $\alpha_2$ is the boundary of $\S_{U_2}$
	where $U_2 \subset U_1$ are nested stable neighborhoods of $x$. 
	
	Let $U$ be a stable neighborhood of $x$, let $A$ be an annulus in the interior of $\S_U$ around $x$, and let $A'$ be the component of $\S_U-A$
	that does not contain $x$. We say $A$ is a \textbf{big annulus} for the neighborhood $U$ of $x$ if both $A$ and $A'$ contain all types of ends in 
	$U$ other than possibly $x$ itself. If $x$ is an isolated puncture, then $\S_U$ is a punctured disk and every annulus in $\S_U$ around $x$ is a big annulus. 
\end{definition}

Now we check some basic properties of stable neighborhoods and big
annuli relative to a telescoping end, as well as prove an alternate characterization of telescoping.

\begin{LEM} \label{lem:bigannulusproperties}
Let $x$ be a telescoping end of $\S$ and let $U$ be a stable neighborhood of $x$. 
Then any two big annuli in $\S_U$ around $x$ are homeomorphic via a homeomorphism of $\S_U$ that fixes $x$.
\end{LEM}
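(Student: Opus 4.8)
The goal is to show any two big annuli $A_0, A_1$ in $\S_U$ around a telescoping end $x$ are related by a self-homeomorphism of $\S_U$ fixing $x$. I would first dispose of the isolated puncture case: there $\S_U$ is a punctured disk and any two annuli around the puncture are isotopic, so this is classical. For the remaining cases (Cantor type, or a non-isolated-in-$E_g$ successor with Cantor-type predecessors) the strategy is to reduce to the classification of surfaces, \Cref{thm:classification}, by cutting $\S_U$ along each big annulus into three pieces and checking that corresponding pieces have the same invariants.

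\textbf{Key steps.} Write $\alpha_i^1, \alpha_i^2$ for the two boundary curves of $A_i$ (with $\alpha_i^2$ closer to $x$), bounding nested stable neighborhoods $U_i^2 \subset U_i^1$ of $x$. After an isotopy I may assume the curves $\alpha_0^1$ and $\alpha_1^1$ are disjoint, and likewise $\alpha_0^2, \alpha_1^2$ are disjoint; in fact, using that stable neighborhoods of $x$ are cofinal and nested, I can arrange $\alpha_1^1$ and $\alpha_1^2$ to both lie in the ``small'' complementary component $A_0'$ of $A_0$ (or symmetrically), so that the two big annuli are \emph{nested}: there is a third curve $\beta$ so that $\S_U$ is cut by $\alpha_0^1, \alpha_0^2$ and then one checks $A_1 \subset A_0'$. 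Concretely, it suffices to prove the lemma when $A_1$ is a big annulus contained in $A_0'$, and then chain: any big annulus can be pushed into the small side of any other big annulus, so a zig-zag of such moves connects $A_0$ to $A_1$. For the nested case, cut $\S_U$ along $\alpha_0^1$: we get $\S_{U_0^1}$, a neighborhood of $x$ which (since $A_0$ is big) still contains every type of end of $U$; cutting further along $\alpha_0^2$ gives the annulus $A_0$ and the smaller neighborhood $\S_{U_0^2}$. Do the same for $A_1$ inside $\S_{U_0^2}$'s complement — no wait, inside $A_0'$. The point is that each of the finitely many complementary pieces between the four curves is a surface with one or two boundary components whose genus is $0$ or $\infty$ and whose end space is a clopen subset of $U$; I must check the corresponding pieces for $A_0$ and $A_1$ have homeomorphic end spaces (preserving the planar/non-planar coloring) and the same genus. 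This is where ``big'' and ``telescoping'' get used: bigness guarantees both sides of each annulus see all types of ends in $U$, so each relevant piece's end space is a clopen subset of $U$ containing a point of every $\Homeo$-orbit meeting $U$; combined with stability (\Cref{prop:stable} giving the telescoping decomposition) and \Cref{cor:accumulate} / \Cref{lem:Uy}, such a clopen subset is forced to be homeomorphic to the ``expected'' model. Then \Cref{thm:classification} produces the homeomorphism between pieces; gluing along the shared boundary curves (matching them up, which one can do after an isotopy) yields a homeomorphism of $\S_U$. Finally, arrange the gluing to fix $x$, which is automatic since the piece containing $x$ is mapped to itself, and its self-homeomorphism can be chosen to fix the end $x$ because $x$ is (by \Cref{lem:MRstable}) the unique maximal end or of Cantor type — in either case the homeomorphism can be taken to fix it.

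\textbf{Main obstacle.} The delicate point is verifying that the ``middle'' pieces — the parts of $\S_U$ between $\alpha_0$ and $\alpha_1$ — have matching end spaces and genus, i.e., that bigness plus stability really does pin down each complementary clopen subset of $U$ up to homeomorphism. The subtlety is that ``contains all types of ends in $U$'' is a statement about $\Homeo(\S)$-orbits, not about homeomorphism type of the clopen set, so I need the structural input from \cite{MR2023} (finitely many maximal classes, \Cref{prop:stablefinmax}; the local-closedness of orbits, \Cref{lem:Cantor-type}; and the absorption lemma \Cref{lem:Uy}) to upgrade ``meets every orbit'' to ``is homeomorphic to $U$ minus at most $x$'' (or to $U$ itself, in the Cantor-type case where $x$ is not isolated). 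I expect this end-space bookkeeping, together with tracking the genus ($0$ vs.\ $\infty$, governed by whether the piece meets $E_g(\S)$), to be the technical heart; the cut-and-paste and the chaining argument are routine once the pieces are known to match. A cleaner alternative worth trying first: prove directly that for \emph{any} big annulus $A$ around $x$, the small complementary component $\S_U - A$ on the non-$x$ side together with $A$ re-glued is homeomorphic to $\S_U$ via a map fixing $x$ (an ``absorb one annulus'' lemma), which immediately gives the result by realizing both $A_0$ and $A_1$ as the ``first'' annulus in a common telescoping exhaustion of $\S_U$ into homeomorphic big annuli — this is really the surface analog of the $V_k$-decomposition in \Cref{prop:stable}, and is probably the intended route given the ``telescoping'' terminology.
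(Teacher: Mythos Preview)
Your proposal is correct and identifies exactly the right ingredients --- the classification of surfaces, \Cref{cor:accumulate}/\Cref{lem:Uy}, and stability of $x$ --- but it takes a more roundabout path than the paper. The paper avoids the nesting/zig-zag reduction entirely: given arbitrary big annuli $A, B$ in $\S_U$ (with $A', A''$ the components of $\S_U \setminus A$ away from and towards $x$, and likewise $B', B''$), it fixes a small stable neighborhood $V \ni x$ with $\S_V \subset A'' \cap B''$ and applies \Cref{cor:accumulate} directly to get $E(A') \cong E(U \setminus V) \cong E(B')$. This is precisely the step you flag as the main obstacle, and ``big'' plus ``telescoping'' are used just as you anticipate: every $y \in U \setminus V$ accumulates onto a Cantor-type predecessor of $x$, which is represented in $E(A')$. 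Combined with stability ($A \cup A'' \cong \S_V \cong B \cup B''$, via maps fixing $x$), this already yields a self-homeomorphism of $\S_U$ sending $A'$ to $B'$, so one may assume $A' = B'$. A second pass of the same argument, now inside the stable neighborhood $A \cup A'' = B \cup B''$, matches $A''$ with $B''$ and hence $A$ with $B$.

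Your ``cleaner alternative'' at the end is essentially this two-step normalization and is indeed the intended route. Your main approach would also go through --- the nested case is fine, and the reduction to it can be done by using the stability homeomorphism $\S_U \to \S_V$ to push one annulus inside the other --- but it is a detour: the point is that \Cref{cor:accumulate} already gives $A' \cong B'$ with no positional hypothesis on $A$ and $B$, so there is nothing to chain.
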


\begin{proof} 
Let $A$ and $B$ be two big annuli in $\S_U$. Let $A'$ be the component of $\S_U-A$
that does not contain $x$ and $A''$ be the component that does contain $x$. 
Let $B'$ and $B''$ be similarly defined. 

Let $V \subset U$ be a small neighborhood of $x$ that is disjoint from $E(A)$ and $E(B)$. 
The end space $E(A')$ is a subset of the end space of $E(U-V)$. Every 
$y \in E(U-V)$ is either a predecessor of $x$ or accumulates to a predecessor of $x$. 
Either way, $E(y)$ has an accumulation point in $E(A')$. Corollary \ref{cor:accumulate}
implies that $E(A')$ is homeomorphic to $E(U-V)$. Also, since $x$ is an stable end, 
$\S_V$ is homeomorphic to $A \cup A''$. Therefore, there is a homeomorphism $\phi_A$ of 
$\S_U$ fixing $x$ that sends $A \cup A''$ to $\S_V$ and $A'$ to $\S_{U}-\S_V$. 
Similarly, there is a homeomorphism $\phi_B$ of $\S_U$ fixing $x$ that sends $B \cup B''$ to $\S_V$ and $B'$ 
to $\S_{U}-\S_V$. Then $\phi_B^{-1}\phi_A$ sends $A'$ to $B'$. 

Hence, we can assume $A' = B'$. Now applying the same argument, we can find a homeomorphism 
$\psi_A$ of $A \cup A''$ fixing $x$ that sends $A''$ to $\S_V$ and a homeomorphism 
$\psi_B$ of $B \cup B''$ fixing $x$ that sends $B''$ to $\S_V$. Then $\phi_B^{-1}\phi_A$ sends $A$ to $B$
and we are done.  
\end{proof} 

\begin{LEM}\label{lem:telnbhdannuli}
Let $x$ be an end and $U$ be a stable neighborhood of $x$. Then 
$x$ is a telescoping end of $\S$ if and only if there is exists a subdivision 
\[
\S_U = \bigcup_{i=0}^{\infty} Y_{i}
\] 
of $\S_U$ into annuli $Y_i$ around $x$ so that
\begin{enumerate}[(i)]
	\item $Y_{i} \cap Y_{j} = \emptyset$ if $|i-j| >1$, 
	\item $Y_{i}$ and $Y_{i+1}$ intersect only along one of their boundary components, and 
        \item for $0 \leq i \leq j$, we have $\cup_{k=i}^j Y_k$ is homeomorphic to $Y_0$. 
	\end{enumerate}
In particular, each $E(Y_i)$ contains all types of ends in $U$ other than possibly $x$ itself, and for $i \geq 1$, $Y_i$ is a big annulus. 	
\end{LEM}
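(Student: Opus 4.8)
The statement is an ``if and only if'', so I would prove the two directions separately, and it is natural to begin with the harder forward direction: assume $x$ is telescoping and produce the annular subdivision. The key point is to reduce to the Stone space statement \Cref{prop:stable}(iii) applied to the neighborhood $U$ of $x$ in $E(\S)$ together with its coloring by $E_g(\S)$. That proposition gives a decomposition $U - \{x\} = \bigsqcup_{k=1}^\infty U_k$ into clopen sets with $U_k \searrow x$, with $U_k$ embedding into $U_{k+1}$, and with the crucial ``any subsequence rearranges to the whole thing'' property. The plan is to translate this into the surface by setting $Y_i$ to be the subsurface lying between nested separating curves $\alpha_i, \alpha_{i+1}$ chosen so that $E(Y_i)$ realizes (a finite block of) these $U_k$'s. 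The telescoping hypothesis enters exactly to guarantee that each such $Y_i$ can be taken to be a genuine \emph{big} annulus — i.e., that $E(Y_i)$ contains all types of ends occurring in $U$ other than $x$. For the Cantor-type case this is automatic once the block is large enough; for the successor case (iii) one uses that $x$ has all predecessors of Cantor type together with \Cref{cor:accumulate} (every end of $U - \{x\}$ accumulates to a predecessor of $x$, which is of Cantor type and hence has an accumulation point in any big enough clopen piece), and the non-isolation in $E_g(\S)$ handles the non-planar ends; for the isolated puncture case $\S_U$ is a punctured disk and any annular exhaustion works. Property (iii) of the subdivision — that $\bigcup_{k=i}^j Y_k \homeo Y_0$ — is then read off from the ``furthermore'' clause of \Cref{prop:stable}(iii): a finite (or cofinite) union of consecutive blocks has the same colored end space as $\S_U$ minus a clopen neighborhood of $x$, hence the same end space as $\S_U$ itself by stability, and since all these subsurfaces have one boundary circle and the same genus type (infinite iff $U$ meets $E_g(\S)$), the classification of surfaces (\Cref{thm:classification}) finishes it.

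For the reverse direction, assume such a subdivision exists and deduce that $x$ is telescoping. From (iii) with $i=0$ we get $\bigcup_{k=0}^j Y_k \homeo Y_0$ for all $j$, and letting the pieces exhaust we get that $\S_U$ itself is, in an appropriate sense, self-similar: $E(U) \homeo E(U) \setminus (\text{nbhd of } x)$, which is a reformulation of stability of $U$ at $x$ via \Cref{prop:stable}(ii), so $x$ is a stable end (this we already know, but it is reassuring). More to the point, each $Y_i$ for $i \geq 1$ is being asserted to be a big annulus, so every type of end of $U$ other than $x$ already appears in the ``inner'' piece $Y_1$; iterating, every such type appears inside arbitrarily small neighborhoods of $x$. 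Combined with stability this forces $x$ to be maximal in its neighborhood $U$ or the whole local end structure to be ``telescoping'' in the sense of \Cref{def:telescopingend}. Concretely: if $E(x) \cap U$ is uncountable then $x$ is of Cantor type (case (ii)); if $x$ is an isolated point of $U - \{x\} = \emptyset$... rather, if $U = \{x\}$ then $x$ is an isolated puncture (case (i)); otherwise $U - \{x\}$ is nonempty with $x$ its unique accumulation-type point, which by \Cref{lem:MRstable} means $x$ is the unique maximal end of $U$, and one checks using the big-annulus condition that $x$ must be a successor with all predecessors of Cantor type and non-isolated in $E_g(\S)$ — the predecessors being exactly the maximal ends of $Y_1$, which must be of Cantor type because otherwise a finite orbit of ends could not be distributed across infinitely many disjoint big annuli $Y_i$ while still $\searrow x$.

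The main obstacle I expect is bookkeeping the \emph{coloring} correctly throughout — i.e., making sure at every stage that the homeomorphisms and embeddings respect $E_g(\S)$, so that ``$Y_i$ contains all types of ends in $U$'' literally means all $\Homeo$-orbit types including the distinction planar/non-planar, and that the genus of each $Y_i$ is infinite precisely when its end space meets $E_g(\S)$. This is where the surface case genuinely differs from the bare Stone space case of \Cref{prop:stable}, and it is the reason \Cref{cor:accumulate} and \Cref{lem:Cantor-type} (local closedness of $E(y)$) are needed rather than a naive application of the 0-dimensional result. A secondary nuisance is that the $Y_i$ must be honest annuli (two boundary curves, each separating off a stable neighborhood) meeting consecutively along single curves — so one must choose the separating curves $\alpha_i$ compatibly, which is routine given that one may always pass to smaller stable neighborhoods (stability passes to sub-neighborhoods) and that any clopen piece of a stable neighborhood is realized by a subsurface with connected boundary.
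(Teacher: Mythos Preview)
Your approach is essentially the paper's: for the forward direction build a nested family of separating curves so that each annular layer contains every end-type in $U$ except possibly $x$ (splitting into the isolated-puncture, Cantor-type, and successor cases exactly as you describe), then invoke the classification of surfaces; for the reverse direction, read off the predecessors of $x$ as the maximal types in $E(Y_0)$ and argue they must be of Cantor type because a finite orbit cannot be distributed across the infinitely many homeomorphic $Y_k$.

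One concrete correction: your justification of condition~(iii) via the ``furthermore'' clause of \Cref{prop:stable}(iii) does not work as written. That clause asserts $\bigsqcup_{n} Y_{k_n}\homeo U\setminus\{x\}$ for \emph{infinite} index sequences; it says nothing about a finite block $\bigsqcup_{k=i}^{j} E(Y_k)$ being homeomorphic to $E(Y_0)$. The paper instead proves $Y_0\cup Y_1\homeo Y_0$ (and $\homeo Y_1$) directly using \Cref{cor:accumulate}: once each $E(Y_k)$ contains all types, every end of $E(Y_0\cup Y_1)$ has an accumulation point of its class inside $E(Y_0)$, so \Cref{cor:accumulate} gives the end-space homeomorphism, and then one checks genus and boundary (note: two boundary circles, not one) and applies classification. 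You already cite \Cref{cor:accumulate}, just move it here. Also, in the reverse direction the paper handles ``$x$ not isolated in $E_g(\S)$'' as the very first step, via a genus count: if $x$ were the unique non-planar end of $\S_U$ then some $Y_k$ would carry finite nonzero genus, contradicting $Y_0\homeo Y_0\cup\cdots\cup Y_k$. You flag this as a bookkeeping obstacle but should make the argument explicit.
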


\begin{proof}
Assume $x$ is a telescoping end and let $U$ be a stable neighborhood of $x$. Let 
\[
U=U_0 \supset U_1 \supset U_2 \supset\dots
\] 
be a family of nested subsets of $U$ that descends to $x$, $U_k \searrow x$. We immediately note that if $x$ is an isolated puncture, then we can take the collection $\{Y_{i}\}$ to be nested annuli that descend to $x$. From now on we assume that $x$ is not an isolated puncture.
First we show that, for $k_1$ large enough, $U - U_{k_1}$ contains all types of ends in $U$ other 
than possibly $x$ itself. 

There are two cases to consider. If $x$ is of Cantor type then, for every end $y \in U$, $E(y) \cap U$
accumulates to $x$ and hence its accumulates to every point in $E(x) \cap U$. So, we only need to 
take $k_1$ large enough so that $(U - U_{k_1}) \cap E(x)$ is not empty and this ensures that 
$(U - U_{k_1})$ contains all types of ends.
If $x$ is not of Cantor type then it has finitely many predecessors $y_1,...,y_n$ and they are all 
of Cantor type. Then for every end $z \in U$, $E(z)$ accumulates to some $y_i$ and hence to 
all points in $E(y_i) \cap U$. Hence, if $k_1$ large enough so that $(U - U_k) \cap E(y_i)$ 
is not empty for $i=1, \dots, n$ then $(U - U_k)$ contains all ends except possibly $x$ itself. 

We proceed in this way and choose indices $k_1, k_2, \dots$ such that $U_{k_i} - U_{k_{i+1}}$ contains 
all types of ends in $U$ other than possibly $x$ itself. Let $\S_U$ be a stable neighborhood of 
$x$ in $\S$ and denote the boundary of $\S_U$ by $\alpha_0$.  Choose a sequence of curves $\alpha_i$ 
in $\S_U$ that exit the end $x$ such that the component of $\S_U - \alpha_i$ that contains $x$ has the ends 
space $U_{k_i}$. Let $Y_i $ be the annulus bounded by $\alpha_i$ and $\alpha_{i+1}$. 
Then $\S_U = \bigcup_{i=0}^{\infty} Y_{i}$ and the first two assumptions of the lemma hold by construction. 

To see that the final assumption holds, it is sufficient to show that $Y_0 \cup Y_1$ is homeomorphic to
both $Y_0$ and $Y_1$. If $\S_U$ has infinite genus, then $x$ is a non-planar end. Since 
$x$ is not isolated in $E_G$, some ends in some $Y_i$ (and hence every $Y_i$) have to be non-planar. 
Therefore, every $Y_i$ has infinite genus. Otherwise, $\S_U$ has genus zero and hence
every $Y_i$ has genus zero. Hence, $Y_0 \cup Y_1$, $Y_0$ and $Y_1$ have the same genus. 
Also, they all have two boundary components. We only need to check that their end spaces are
homeomorphic. But this follows from Corollary~\ref{cor:accumulate} applied to $E(Y_0) \subset E(Y_0 \cup Y_1)$
or to $E(Y_1) \subset E(Y_0 \cup Y_1)$. This finishes the proof of the one direction. 

To see the other direction, let $\S_U$ be a stable neighborhood of $x$ in $\S$ and let 
$\S_U = \bigcup_{i=0}^{\infty} Y_{i}$ be a decomposition of $\S_U$ into annuli as described in the statement. 
We show tha $x$ is a telescoping end by checking the conditions of Definition~\ref{def:telescopingend}. 

If $x$ is the only non-planar end in $\S_U$, then there is an annulus $Y_k$ that has a finite 
non-zero genus. The $Y_0$ also has finite non-zero genus and hence $Y_0$ cannot be homeomorphic to 
$Y_0 \cup \dots \cup Y_k$ which contradicts assumption (iii). Therefore, either $x$ is a planar end, 
or it is not isolated in $E_G(\S)$.

If $x$ is an isolated puncture or is of Cantor type, then we are done by definition. Otherwise, we need to show
that $x$ is a successor and the predecessors are of Cantor type. By Lemma~\ref{prop:stablefinmax}, 
$E(Y_0)$ has finitely many different maximal types,
say $y_1, \dots, y_n$. Then every $z \in U$, $E(z)\cap E(Y_0)$ is non-empty and hence 
$E(z)$ accumulates to some $y_i$. This, by Definition~\ref{def:successor}, implies that 
$x$ is a successor. 

The assumption (iii) in particular implies that $E(Y_0)$ is homeomorphic to $E(Y_k)$ for every $k$
and in particular $E(y_i) \cap E(Y_k)$ is a non-zero for every $k$. If f $E(y_i) \cap E(Y_0)$ is finite, 
then $E(y_0) \cap (Y_0 \cup Y_1)$ is larger than $E(y_0) \cap Y_0$
and hence $Y_0$ cannot be homeomorphic to $Y_0 \cup Y_1$ which contradicts assumption (iii). 
If $E(y_i) \cap E(Y_0)$ is infinite then $E(y_i)$ has an accumulation point in $E(Y_0)$.
But $y_i$ is maximal in $E(Y_0)$ hence the accumulation point has to be in $E(y_i)$. 
Therefore every point in $E(y_i) \cap E(Y_0)$ is an accumulation point of $E(y_i)$ and
$E(y_i)$ is uncountable. Lemma~\ref{lem:Cantor-type} implies that $y_i$ is of Cantor type.
This holds for every  $i=1, \dots, n$, therefore $x$ is a telescoping end.
\end{proof}

We have now also seen that
the definition of telescoping, \Cref{def:telescopingend}, fits into
the same framework as \Cref{prop:stable} and \Cref{stable} from the end space case. Here a
\textbf{brick} is a subsurface $\cB$ of $\S_U$ which is a disjoint union of big
annuli so that all complementary components contain all types of ends
in $U$ except possibly $x$. For example, fixing a decomposition
$\S_U=\cup_{i=0} Y_i$ as in \Cref{lem:telnbhdannuli}, the union $\cup
Y_{n_i}$ is a brick for any infinite and co-infinite subset
$\{n_i\}\subset\{1,2,\cdots\}$ (it is important not to include $Y_0$
since $Y_0$ is not a big annulus). From
\Cref{lem:bigannulusproperties} we see that for any two bricks there
is a homeomorphism of $\S_U$ taking one to the other. In particular,
any brick can be enlarged by inserting big annuli into the
complementary components, including the complementary component that
contains $\partial\S_U$, and therefore we could choose a homeomorphism
between two bricks to fix a neighborhood of $\partial\S_U$ that
contains a big annulus.

\begin{assumptions} As in the end space case we now set up some notation and standing assumptions. Unless explicitly stated, these will hold until the end of \Cref{ssec:step5surface}.
\begin{enumerate}[(1)]
\item If $A$ is an annulus or a finite type subsurface of $\S$ we will
  use $G(A)$ to denote the subgroup of $G=\Homeo(\S)$ consisting
  of maps supported on the interior of $A$. If $U$ is a stable
  neighborhood of a telescoping end $x$, then $G(\S_U)$ is the set of
  homeomorphisms supported on some $\S_{U'}$ so that $U\smallsetminus
  U'$ contains all types of ends in $U$ except possibly $x$. 

\item Fix a telescoping end $x$ and $\S_U =
\bigcup_{i=0}^{\infty} Y_{i}$ a subdivision as in \Cref{lem:telnbhdannuli}. 

\item Assume that $W\subset G$ is a symmetric set so that $G$ is covered by countably many sets $g_{i}W$ with $g_{i}\in G$ and $W^{2}$ is dense in $G(\S_{U})$ (i.e. $\overline{W^{2}} \supset G(\S_{U})$).
  
  \item For any brick, $\cB$, we write $G(\cB)$ for the closed subgroup of homeomorphisms of $\S_{U}$ or $\S$ supported on $\cB$. 
\end{enumerate}
\end{assumptions}

\subsection{Step 1: Fragmentation}

\begin{lemma}\label{lem:fpfragmentS}
	Any $h \in G(\S_U)$ can be written as $h=h_{1}h_{2}$ so that
        each $h_{i}$ belongs to $G(\S_U)$ and fixes some point in $G(\S_U) \cdot x$. 
\end{lemma}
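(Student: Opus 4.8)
The strategy is to mimic the end-space argument in \Cref{lem:fpfragment}, transporting it through the correspondence between ends of $\S$ and the decomposition $\S_U = \bigcup_{i=0}^\infty Y_i$ given by \Cref{lem:telnbhdannuli}. If $h(x) = x$ there is nothing to do, so assume $h(x) \ne x$. The first observation is that $x$ accumulates a $G(\S_U)$-orbit other than its own sitting inside $\S_U$: indeed, by \Cref{lem:shift} (applied to the stable end space $U$, using that each brick is homeomorphic to $U - \{x\}$) the orbit of $x$ under homeomorphisms supported in $\S_U$ accumulates on $x$, so there is a big annulus $Y_j$ (with $j \ge 1$) whose end space meets $G(\S_U)\cdot x$ and which is disjoint from $h^{-1}(x)$. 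The plan is then to produce $h_1 \in G(\S_U)$ fixing $x$ with $h_1|_{Y_j} = h|_{Y_j}$; then $h_2 := h_1^{-1} h$ is the identity on $Y_j$, hence fixes the point of $G(\S_U)\cdot x$ lying in $E(Y_j)$, and $\S_U$ is stable with respect to that point as well, so both factors land in $G(\S_U)$ with the desired fixed points.

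To build $h_1$, I would first note that $h(Y_j)$ is a big annulus disjoint from a neighborhood of $x$, so it is contained in a finite union $Y_{k_1} \cup \cdots \cup Y_{k_r}$ of consecutive annuli, i.e.\ in some $\bigcup_{i \le N} Y_i$. Now extend $h|_{Y_j}$ to an embedding of $\bigcup_{i \le j} Y_i$ into $\bigcup_{i \le N'} Y_i$ (for $N'$ large) onto a clopen-complemented, boundary-parallel-complemented subsurface, sending $\bigcup_{i < j} Y_i$ to a big annulus disjoint from $h(Y_j)$; this is possible because any big annulus can be shifted into a smaller collar of $\partial \S_U$ while keeping all types of ends in its complement, using \Cref{lem:bigannulusproperties} and the enlargement remark following \Cref{lem:telnbhdannuli}. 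Then I would apply the surface analogue of \Cref{prop:axiomstar} — the statement that an embedding of $\bigcup_{i\le n} Y_i$ into $\bigcup_{i \le m} Y_i$ onto a subsurface with big-annulus complement extends to a self-homeomorphism of $\bigcup_{i \le p} Y_i$ for some $p$, proved via the $A \sqcup B \sqcup A$ puzzle of \Cref{lem:puzzle} with $A, B$ now subsurfaces glued along boundary curves — to upgrade this embedding to a homeomorphism of $\bigcup_{i \le p} Y_i$ agreeing with $h$ on $Y_j$. Extending by the identity on the rest of $\S_U$ gives $h_1 \in G(\S_U)$, and since $h_1$ is supported away from $x$'s collar it fixes $x$.

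The main obstacle I anticipate is bookkeeping about genus and boundary rather than anything conceptual: when $x$ is a non-planar telescoping end every $Y_i$ has infinite genus, so the "puzzle" rearrangements must respect infinite genus (which they do, since infinite genus is absorbing), but one must be careful that the finitely many annuli touched in building $h_1$ are glued back to the rest of $\S_U$ along the correct separating curves so the result is a genuine homeomorphism of a subsurface with one boundary circle matching $\partial \S_U$. The cleanest route is probably to phrase and prove a surface version of \Cref{prop:axiomstar} as a preliminary lemma (embedding of an initial segment of the $Y_i$'s extends to a self-homeomorphism of a longer initial segment), exactly paralleling \Cref{lem:puzzle} and its corollary, and then the proof of \Cref{lem:fpfragmentS} becomes a near-verbatim copy of \Cref{lem:fpfragment}: choose $j$ by accumulation of the orbit, fit $h(Y_j)$ into finitely many annuli, extend-and-complete to get $h_1$ fixing $x$, and set $h_2 = h_1^{-1} h$ fixing a point of $G(\S_U)\cdot x$ in $E(Y_j)$. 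Finally I would record that $\S_U$ is a stable neighborhood of every point in the $G(\S_U)$-orbit of $x$ (since stability of the end space is a homeomorphism-invariant local condition), so both $h_1$ and $h_2$ satisfy the conclusion.
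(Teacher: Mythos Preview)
Your plan is essentially the paper's own argument, but you have made it heavier than necessary and missed one small technical point.

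\emph{Where you overshoot.} You propose to find an orbit point in some $Y_j$ by invoking \Cref{lem:shift} to make the orbit of $x$ accumulate on $x$, and then to manufacture $h_1$ by first stating and proving a surface version of \Cref{prop:axiomstar} via the puzzle lemma. The paper avoids both. For the orbit point, it simply observes that $h^{-1}(x)$ itself lies in the $G(\S_U)$--orbit of $x$ and sits in some $Y_i$; since any two big annuli are homeomorphic via a map of $\S_U$ fixing $x$ (\Cref{lem:bigannulusproperties}), every big annulus $Y_j$ with $j\neq i$ contains an orbit point. For building $h_1$, the paper appeals directly to the classification of surfaces (\Cref{thm:classification}): once you know $h(Y_j)$ sits in finitely many annuli, matching up genera, boundary components and end spaces lets you write down $h_1$ supported on that finite block and agreeing with $h$ on $Y_j$. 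No separate surface--axiomstar lemma is needed.

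\emph{Where you underdeliver.} Membership in $G(\S_U)$ is not just ``homeomorphism of $\S_U$'': by definition one needs support inside some $\S_{U'}$ with $U\smallsetminus U'$ containing all end-types, i.e.\ identity on a big annulus adjacent to $\partial\S_U$. You assert both $h_i$ land in $G(\S_U)$ without arranging this ``luft''. The paper handles this up front: since $h\in G(\S_U)$ is already supported in some $\S_{U'}$, it rechooses the decomposition so that $\S_{U'}=\bigcup_{i\ge 1}Y_i$ and $Y_0=\S_U\smallsetminus\S_{U'}$. Then $h$, and by construction $h_1$, are identity on $Y_0$, so $h_2=h_1^{-1}h$ is too, and both factors visibly lie in $G(\S_U)$. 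You should insert this renormalization step at the start; after that your argument goes through, and in fact collapses to the paper's shorter one.
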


\begin{proof}
  Since $h\in G(\S_U)$, by definition there is a smaller stable
  neighborhood $\S_{U'}$ so that $h$ is supported in $\S_{U'}$ and
  $U\smallsetminus U'$ contains all types of ends as $U$ except
  possibly $x$. We can modify the subdivision $Y_i$ so that
  $\S_{U'}=\cup_{i=1}^\infty Y_i$ (e.g. choose a homeomorphism
  $\S_U\to\S_{U'}$ fixing $x$ and let the new $Y_i$ be the image of
  the (old) $Y_{i-1}$ and set $Y_0 = U \setminus U'$).
	Suppose $h(x)\neq x$ and assume
    $h^{-1}(x)\in Y_i$ for some $i>0$. Choose some $j\neq i$, $j>0$ and note
    that $h(Y_j)$ is disjoint from a neighborhood of $x$, so it is
    contained in some finite union
    $Y_{k_1}\cup\cdots\cup Y_{k_r}$. Using the classification of surfaces we can find a
    homeomorphism $h_1\in G(\S_U)$ supported on only finitely many of
    the annuli including $Y_0$ and
    agreeing with $h$ on $Y_j$. Thus $h=h_1h_2$ where $h_1$ fixes
    $x$ and $h_2=h_1^{-1}h$ is identity on $Y_0$ and $Y_j$. Finally we note that since $Y_{i}$ and $Y_{j}$ are both big annuli, they are homeomorphic, and so $Y_{j}$ contains an end in $G(\S_U) \cdot x$ and $h_{2}$ fixes this end. 
\end{proof}

Again, note that if $U$ is a stable neighborhood of an end $x$, then $U$ is also a stable neighborhood of $g \cdot x$ for all $g \in G(\S_U)$. Keeping this in mind, the statements that follow will apply when one replaces the telescoping end $x$ with an end of the form $g \cdot x$ for $g \in G(\S_{U})$. We will slightly abuse notation and not keep track of this distinction.

\begin{lemma}\label{lem:brickfragmentS}
  Any $h\in G(\S_{U})$ that fixes $x$ can be written as $h=h_1h_2$ so that each $h_i$ is
  supported on a brick.
\end{lemma}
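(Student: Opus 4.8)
The plan is to mimic the end-space fragmentation argument of \Cref{lem:brickfragment}, replacing ``finite unions of $Y_i$'' with ``finite unions of big annuli'' and using the classification of surfaces to extend partial homeomorphisms. Since $h$ fixes $x$ and is supported in $\S_U$, I may (after modifying the subdivision $\S_U = \bigcup_{i\ge 0} Y_i$ of \Cref{lem:telnbhdannuli}, exactly as in the proof of \Cref{lem:fpfragmentS}) assume $h$ is supported on $\bigcup_{i\ge 1} Y_i$, fixing a neighborhood of $\partial\S_U$ containing the big annulus $Y_0$.

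First I would choose, inductively, two interleaved index sequences $m_1 < n_1 < m_2 < n_2 < \cdots$ (with $m_k \in [n_{k-1}+1, n_k - 1]$, $n_0 = 0$): having selected $m_k$, pick $n_k > m_k$ large enough that $h^{\pm 1}(Y_{m_k})$ is contained in the interior of $\bigsqcup_{i=n_{k-1}+1}^{n_k-1} Y_i$ (possible since $h^{\pm1}(Y_{m_k})$ is disjoint from a neighborhood of $x$ and hence compactly contained in some finite union of the annuli), and then pick $m_{k+1} > n_k$ large enough that $h^{\pm1}(Y_{m_{k+1}})$ is disjoint from $\bigsqcup_{i=1}^{n_k} Y_i$. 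Set $\cA_m = \bigsqcup_k Y_{m_k}$ and $\cA_n = \bigsqcup_k Y_{n_k}$; these are bricks, since each $Y_{m_k}, Y_{n_k}$ with index $\ge 1$ is a big annulus and the index sets and their complements are both infinite. Let $\cA_n'$ be a complementary brick to $\cA_n$ (we may take it to contain $Y_0$).

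By construction $h(\cA_m)$ is disjoint from $\cA_n$, and each component $h(Y_{m_k})$ lands in a finite union of annuli disjoint from $\cA_n$; since a big annulus mapped homeomorphically into $\S_U$ can be realized by a homeomorphism of $\S_U$ supported off any prescribed brick (using \Cref{lem:bigannulusproperties} together with the classification of surfaces to match genus and end space of the complementary pieces), I can build a homeomorphism $g \in G(\S_U)$ supported on $\cA_n'$ with $g|_{\cA_m} = h|_{\cA_m}$. Then $h_2 := g^{-1}h$ is the identity on $\cA_m$, hence preserves the brick $\cA_m$ and is supported on its closure, while $h_1 := g$ is supported on the brick $\cA_n$; and $h = h_1 h_2$. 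One final bookkeeping point: $g$ need not fix $x$, but it preserves the big annulus $Y_0$ near $\partial\S_U$ (we chose $\cA_n' \ni Y_0$ and can take $g = \id$ there), so $g$ and $g^{-1}h$ both lie in $G(\S_U)$.

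The main obstacle is the step of extending the partial map $h|_{\cA_m}$ to a homeomorphism $g$ of $\S_U$ supported on the complementary brick $\cA_n'$: one must check that after removing the (clopen-in-end-space, compact-with-boundary) image $h(\cA_m)$ from $\cA_n'$, what remains is still homeomorphic to what one needs it to be, which requires invoking \Cref{cor:accumulate} (every relevant end, or its $\preceq$-class, accumulates onto the big annuli involved, so end spaces match) and the surface classification \Cref{thm:classification} to handle genus. This is precisely the surface analog of the sentence ``there is a homeomorphism $g$ with support in $\cA_n'$ such that $f|_{\cA_m} = g|_{\cA_m}$'' in the proof of \Cref{lem:brickfragment}, and it is where the ``big'' in ``big annulus'' — i.e.\ that every complementary component carries all types of ends — does the work.
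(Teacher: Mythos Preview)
Your approach is essentially the paper's: interleave two index sequences, use the classification of surfaces to build $h_1=g$ agreeing with $h$ on one brick and equal to the identity on the other, and conclude that both $h_1$ and $h_2=g^{-1}h$ are supported on bricks. The paper's proof is terser but follows the same outline (with the roles of your $m$'s and $n$'s swapped).

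One genuine slip in your conclusions, though: you write that $h_2=g^{-1}h$ ``is the identity on $\cA_m$, hence \ldots\ is supported on its closure'' and that ``$h_1:=g$ is supported on the brick $\cA_n$''. Both are backwards. Since $h_2$ is the identity on $\cA_m$, it is supported on the \emph{complementary} brick to $\cA_m$; and since $g$ is (by your own construction) supported on $\cA_n'$ and identity on $Y_0$, it is supported on the complementary brick to $\cA_n$, not on $\cA_n$ itself. These complementary bricks are what witness the lemma --- your construction is right, only the final labels are swapped.
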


\begin{proof}
	Given $h$, choose two sequences $(n_{i})$ and $(m_{i})$ so that $1=n_1<m_1<n_2<m_2<\cdots$, $h(Y_1\cup Y_2\cup\cdots\cup Y_{n_{i}})$ is disjoint from $Y_{m_{i}}$ and $h(Y_{n_{i+1}})$ is disjoint from $Y_{m_{i}}$. Then define $h_1$ to agree with $h$ on each $Y_{n_i}$ and to be the identity on $Y_{m_{i}}$ for all $i$. This can be done by the classification of surfaces. Then $h_1$ and $h_2=h_1^{-1}h$ are both supported on bricks of $\S_U$.
\end{proof}

Just as in the end space case, these two lemmas allow us to write any
$h \in G(\S_U)$ as $h=h_{1}h_{2}h_{3}h_{4}$ with each $h_{i}\in
G(\S_U)$ supported on a brick limiting to a fixed point in the orbit
of $x$. 

\subsection{Step 2: Finding Commutators}\label{ssec:step2S}

For surfaces this step will be drastically different than in the end space case. For end spaces, since we were dealing with totally disconnected spaces, we could use maps that shift an entire brick and restrict to the identity elsewhere. Now the topology of the surface obstructs using such a shift. Instead, we will fragment our map again using an Eilenberg-Mazur swindle \cite{Bass1963,Mazur1959} into two maps, still supported on bricks, that are themselves commutators. 

\begin{DEF}
	Let $A$ be a big annulus in $\S_U$. We say that $f \in G(A)$ has
        \textbf{alternating support} if $\supp(f) = A_{1} \sqcup
        A_{2}$, with each $A_{i}$ a big annulus and with all 3
        complementary components of $A_1\sqcup A_2$ in $A$ big annuli,
        and so that there exists $h \in G(A)$ so that $h(A_{1}) =
        A_{2}$ and $f\vert_{A_{2}} = h(f\vert_{A_{1}})^{-1}
        h^{-1}$. If $\cA$ is a brick, then we say that $f \in G(\cA)$ has
        \textbf{alternating support} if $f\vert_{A}$ has alternating
        support for each annulus component $A \in \cA$. 
\end{DEF}

\begin{LEM}\label{lem:altsuppcomm}
	If $f \in G(A)$ has alternating support, then $f$ can be written as a single commutator of maps in $G(A)$. Similarly, if $f \in G(\cA)$, for $\cA$ a brick, has alternating support, then $f$ can be written as a single commutator of maps in $G(\cA)$. 
\end{LEM}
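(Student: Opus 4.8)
The plan is to run the classical Anderson trick / Eilenberg-Mazur swindle that is already sketched in the "finding commutators" discussion, but packaged so that the output is literally one commutator rather than a product. First I would treat the single big-annulus case. Let $f \in G(A)$ have alternating support, so $\supp(f) = A_1 \sqcup A_2$ with the three complementary components in $A$ all big annuli, and let $h \in G(A)$ be the given homeomorphism with $h(A_1) = A_2$ and $f|_{A_2} = h\, (f|_{A_1})^{-1}\, h^{-1}$. The idea is to build an infinite "shift" inside $A$ that moves $A_1$ to $A_2$, then $A_2$ further in, and so on, using the fact (coming from \Cref{lem:bigannulusproperties} and the discussion after \Cref{lem:telnbhdannuli}) that any two bricks of $\S_U$ are related by a homeomorphism of $\S_U$ fixing $x$, and that one can choose such a homeomorphism fixing a neighborhood of $\partial \S_U$ containing a big annulus. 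Concretely, since the complementary component of $A_1 \sqcup A_2$ in $A$ that lies "beyond" $A_2$ (toward $x$) is itself a big annulus, it admits a subdivision into infinitely many big annuli $B_1 = A_2, B_2, B_3, \dots$ descending toward the $x$-side boundary of $A$; choose a homeomorphism $t \in G(A)$ supported in $A$ with $t(A_1) = B_1 = A_2$ and $t(B_i) = B_{i+1}$ for all $i \geq 1$, and the identity on the component of $A$ on the $\partial\S_U$ side of $A_1$. (Such a $t$ exists by the classification of surfaces, exactly as shift maps were produced in \Cref{lem:shift} and its surface analogue.)

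Next I would define the candidate commutator. Set $g := h \cdot f|_{A_1}$ extended by the identity, so $g$ is supported on $A_1 \cup A_2$, and more usefully define $u \in G(A)$ to be the "infinite product" $u := \prod_{i \geq 0} t^i\, (f|_{A_1})\, t^{-i}$, which makes sense because the supports $t^i(A_1)$ are pairwise disjoint big annuli contained in $A$ and descending toward the $x$-side boundary of $A$ (so the infinite product converges in $\Homeo(\S)$, being locally finite away from that boundary point). Then the standard swindle computation gives $[u, t] = u \cdot (t u^{-1} t^{-1}) = (f|_{A_1}) \cdot \big(\prod_{i\geq 1} t^i (f|_{A_1}) t^{-i}\big)\big(\prod_{i \geq 1} t^{i}(f|_{A_1})^{-1} t^{-i}\big)$, which telescopes to exactly $f|_{A_1}$. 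To also pick up the $A_2$-part of $f$, one modifies $u$ to be a product of copies of $f|_{A_1}$ placed at $A_1$, $t(A_1), t^2(A_1), \dots$ together with the single copy of $f|_{A_2}$ arranged so that the telescoping leaves precisely $f|_{A_1} \cdot f|_{A_2} = f$; since $f|_{A_2} = h (f|_{A_1})^{-1} h^{-1}$ and $h(A_1) = A_2$, this copy is exactly what is needed to make the product over $i \geq 1$ cancel and leave the $A_1$- and $A_2$-pieces. The upshot is $f = [u, v]$ for suitable $u, v \in G(A)$ obtained from $t$ and $h$, both supported in $A$.

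For the brick case, $\cA = \bigsqcup_j A^{(j)}$ is a locally finite disjoint union of big annuli, and $f \in G(\cA)$ with alternating support means $f|_{A^{(j)}}$ has alternating support for each $j$. One simply runs the previous construction in each $A^{(j)}$ simultaneously: the resulting $u^{(j)}, v^{(j)}$ are supported in $A^{(j)}$, so the products $u := \prod_j u^{(j)}$ and $v := \prod_j v^{(j)}$ converge (local finiteness of the $A^{(j)}$), lie in $G(\cA)$, and satisfy $[u,v] = f$ since commutators are computed annulus-by-annulus on disjoint supports. The main obstacle I anticipate is purely bookkeeping: verifying that the infinite products converge in $\Homeo(\S)$ — i.e. that the supports accumulate only at the end $x$ (equivalently at the relevant boundary point of $A$) and nowhere else — and that the telescoping cancellation is set up with the copies of $f|_{A_1}$ and the single copy of $f|_{A_2}$ in exactly the right positions so that $[u,v]$ equals $f$ on the nose rather than some conjugate. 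Both points are handled by choosing the shift $t$ to fix a big-annulus neighborhood of the $\partial\S_U$-side boundary of $A$ (guaranteed possible by the remark following \Cref{lem:telnbhdannuli}) and by the defining relation $f|_{A_2} = h(f|_{A_1})^{-1}h^{-1}$ with $h(A_1) = A_2$, which is precisely the compatibility needed for the swindle to close up.
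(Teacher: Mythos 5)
Your approach works in spirit but misses the much simpler proof that the definition of alternating support is engineered to give, and the one place where you would need to be careful — folding in the $A_2$-part — is left vague in a way that, as phrased, does not quite close.

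The paper's proof is a direct one-liner. Let $f_1$ be the homeomorphism equal to $f$ on $A_1$ and the identity elsewhere, and let $h$ be the homeomorphism appearing in the definition of alternating support, so $h(A_1)=A_2$ and $f|_{A_2} = h (f|_{A_1})^{-1} h^{-1}$. Then $h f_1^{-1} h^{-1}$ is supported on $h(A_1)=A_2$ and equals $f|_{A_2}$ there by definition; since $A_1$ and $A_2$ are disjoint, $[f_1,h] = f_1 \cdot (h f_1^{-1} h^{-1})$ agrees with $f$ on $A_1 \sqcup A_2$ and is the identity elsewhere, i.e.\ $[f_1,h]=f$. Both $f_1$ and $h$ are in $G(A)$, so that is the single commutator. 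The brick case is then immediate by running this on each annulus of $\cA$ and multiplying, exactly as you say. No shift, no infinite product, no swindle: the whole point of the alternating-support condition is that the conjugator $h$ is handed to you, and the two halves of the support are already arranged so that one is the $h$-conjugate of the inverse of the other. (The swindle is used one lemma earlier, in \Cref{lem:EMfragment}, precisely to \emph{manufacture} maps with alternating support; once you have one, extracting a commutator is trivial.)

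Your route instead builds a shift $t$ inside $A$ and runs an Anderson-type infinite-product argument, in effect replaying \Cref{lem:anderson} from the Stone-space section. That can be made to work, but it is substantially heavier and your description of the key step is not tight. With $u = u_0 u_1 u_2 \cdots$ supported on $A_1, B_1, B_2, \dots$ and $v=t$, the commutator $[u,t]$ equals $u_0$ on $A_1$ and $u_i \cdot (t u_{i-1}^{-1} t^{-1})$ on $B_i$ for $i\ge 1$; forcing this to equal $f$ requires $u_0 = f|_{A_1}$, $u_1 = f|_{A_2}\cdot t (f|_{A_1}) t^{-1}$, and then $u_i = t^{\,i-1} u_1 t^{-(i-1)}$ for all $i\ge 2$. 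So the correct $u$ contains infinitely many shifted copies of \emph{both} $f|_{A_1}$ and $f|_{A_2}$, not ``copies of $f|_{A_1}$ together with a single copy of $f|_{A_2}$'' as you wrote; moreover if one naively takes $t|_{A_1}=h|_{A_1}$ and $u=\prod_{i\ge 0} t^i f_1 t^{-i}$, the telescoping kills the $A_2$-part entirely and one gets $[u,t]=f_1$, not $f$. None of this is fatal — it can be repaired by the explicit choice of $u_i$ above — but it shows the bookkeeping you flagged as ``purely'' routine is exactly where your write-up is underspecified. The moral is that you should re-read the definition of alternating support: the map $h$ is part of the data, and it is there so that you can write the commutator directly, without introducing a shift at all.
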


\begin{proof}
	Let $h$ be as in the definition of alternating support. Let $f_{1}$ denote the homeomorphism that agrees with $f$ on $A_{1}$ and is the identity outside of $A_{1}$. Then we can write 
	\begin{align*}
		f = f_{1} h (f_{1})^{-1} h^{-1} = [f_{1},h].
	\end{align*} 
	If $\cA$ is a brick then we can simply apply this to each annulus of $\cA$ and collect terms since the annuli are all pairwise disjoint
\end{proof}

\begin{LEM}(Eilenberg-Mazur Swindle) \label{lem:EMfragment}
	 Let $f \in G(\cA)$ for $\cA$ be a brick of $\S_U$. There exist two
         bricks, $\cA_{1}$ and $\cA_{2}$ in $\S_{U}$, and $h_{i} \in G(\cA_{i})$
         so that $h_{i}$ has alternating support for $i=1,2$ and $f=h_{1}h_{2}$.
\end{LEM}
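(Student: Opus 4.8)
The plan is to run the Eilenberg--Mazur swindle one component of $\cA$ at a time. Write $\cA=\bigsqcup_{i\geq 1}A_i$ with each $A_i$ a big annulus; since $f$ is supported on $\cA$ it is the identity on $\partial\cA$ and hence preserves each $A_i$, so $f=\prod_i f_i$ with $f_i:=f|_{A_i}\in G(A_i)$ and the $A_i$ pairwise disjoint. Because disjointly supported homeomorphisms commute, it suffices to produce, for each $i$, a big annulus $\hat A_i\supset A_i$ (with the $\hat A_i$ pairwise disjoint and accumulating only at $x$) together with a factorization $f_i=P_iQ_i$ in which $P_i$, respectively $Q_i$, is supported on a disjoint union of big annuli inside $\hat A_i$ and has alternating support there. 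Indeed, setting $h_1:=\prod_i P_i$ and $h_2:=\prod_i Q_i$ we get $h_1h_2=\prod_i P_iQ_i=\prod_i f_i=f$; each $h_m$ then has alternating support on the brick $\cA_m$ formed by the relevant big annuli, and the $\cA_m$ turn out to be genuine bricks of $\S_U$ (discussed at the end). (The isolated-puncture case is subsumed: there $\S_U$ is a punctured disk, the ``big annuli'' are topological annuli, and the conditions on bricks are vacuous.)

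Fix $i$ and enlarge $A_i$ to a big annulus $\hat A_i$ for $U$ with $A_i$ contained in its interior, chosen so that $\{\hat A_i\}_i$ remains locally finite and accumulates only at $x$. Using the telescoping structure of a big annulus (cf.\ \Cref{lem:telnbhdannuli}, \Cref{lem:bigannulusproperties}, and the surface analogue of \Cref{lem:shift}, which one obtains from \Cref{thm:classification} and the fact that all bricks around a telescoping end are homeomorphic), fix $\tau_i\in G(\hat A_i)$ so that the sets $A_i^{(k)}:=\tau_i^{\,k}(A_i)$, $k\geq 0$, are pairwise disjoint big annuli with $A_i^{(0)}=A_i$ forming a null family in $\hat A_i$ (diameters tending to $0$ towards a $\tau_i$-fixed interior end or essential curve). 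Put $f_i^{(k)}:=\tau_i^{\,k}f_i\tau_i^{-k}$, so $\supp f_i^{(k)}\subseteq A_i^{(k)}$, and define
\[
P_i:=\prod_{k\geq 0}\bigl(f_i^{(k)}\bigr)^{(-1)^k},\qquad Q_i:=\tau_i P_i\tau_i^{-1}=\prod_{k\geq 1}\bigl(f_i^{(k)}\bigr)^{(-1)^{k+1}}.
\]
Both products converge to homeomorphisms of $\hat A_i$ since the factors have pairwise disjoint, null supports. A direct check gives the swindle identity $f_i=P_iQ_i$: on $A_i^{(0)}$ only $P_i$ acts, and it acts as $(f_i^{(0)})^{+1}=f_i$; on $A_i^{(k)}$ with $k\geq 1$, $P_i$ acts as $(f_i^{(k)})^{(-1)^k}$ while $Q_i$ acts as $(f_i^{(k)})^{-(-1)^k}$, so their composite is the identity there because both are powers of $f_i^{(k)}$; outside $\bigsqcup_kA_i^{(k)}$ both are the identity.

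It remains to see that $P_i$ and $Q_i$ have alternating support. Choose pairwise disjoint big annuli $B_i^{(j)}$, $j\geq 0$, each containing $A_i^{(2j)}$ and $A_i^{(2j+1)}$ in its interior with the three complementary components of $A_i^{(2j)}\sqcup A_i^{(2j+1)}$ in $B_i^{(j)}$ all big annuli (take $B_i^{(j)}$ to be a short sub-chain of a telescoping decomposition of $\hat A_i$). Then $P_i$ is supported on $\bigsqcup_j B_i^{(j)}$ and $g:=P_i|_{B_i^{(j)}}=f_i^{(2j)}\cdot(f_i^{(2j+1)})^{-1}$, so $g$ is supported on the two big annuli $A_i^{(2j)},A_i^{(2j+1)}$; alternating support requires some $h\in G(B_i^{(j)})$ with $h(A_i^{(2j)})=A_i^{(2j+1)}$ and $g|_{A_i^{(2j+1)}}=h\,(g|_{A_i^{(2j)}})^{-1}h^{-1}$, i.e.\ $hf_i^{(2j)}h^{-1}=f_i^{(2j+1)}=\tau_if_i^{(2j)}\tau_i^{-1}$. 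We take $h$ to agree with $\tau_i$ on $A_i^{(2j)}$ (hence on $\supp f_i^{(2j)}$, which forces the conjugation relation) and to extend to a self-homeomorphism of $B_i^{(j)}$ supported on its interior; the extension exists by the classification of surfaces, since the complementary pieces of $A_i^{(2j)}$ and of $A_i^{(2j+1)}$ in $B_i^{(j)}$ are big annuli with matching boundary data, exactly as in \Cref{lem:bigannulusproperties}. The same argument with big annuli $C_i^{(j)}\supset A_i^{(2j+1)}\sqcup A_i^{(2j+2)}$ shows $Q_i$ has alternating support on $\bigsqcup_j C_i^{(j)}$. Setting $\cA_1:=\bigsqcup_{i,j}B_i^{(j)}$ and $\cA_2:=\bigsqcup_{i,j}C_i^{(j)}$, the complementary components of $\cA_m$ in $\S_U$ are the complement of $\bigsqcup_i\hat A_i$, the ``gap'' big annuli between consecutive $B_i^{(j)}$'s (resp.\ $C_i^{(j)}$'s), and the $\partial\hat A_i$-collars; arranging the $\hat A_i$ to form a brick of $\S_U$ and invoking \Cref{lem:telnbhdannuli} (a small stable neighborhood of the accumulation end/curve, being of the same type as a neighborhood of $x$, contains all types of ends of $U$), all of these contain every type of end of $U$ other than possibly $x$, and $\bigsqcup_{i,j}B_i^{(j)}$ accumulates only at $x$ since the $\hat A_i$ do. Hence $\cA_1,\cA_2$ are bricks, $h_1,h_2$ are as required, and $h_1h_2=f$.

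The genuinely delicate part is not the algebra — the swindle identity $f_i=P_iQ_i$ is immediate — but the bookkeeping: choosing the enlargements $\hat A_i$ and the internal telescoping decompositions of each $\hat A_i$ so that $\cA_1$ and $\cA_2$ come out to be honest bricks of $\S_U$, and realizing each conjugating homeomorphism $h$ above with support exactly in the prescribed $B_i^{(j)}$ (resp.\ $C_i^{(j)}$) while agreeing with $\tau_i$ on the appropriate slot. I expect essentially all the care to be needed there.
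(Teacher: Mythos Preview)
Your algebra (the swindle identity $f_i=P_iQ_i$ and the alternating--support check via a conjugator agreeing with $\tau_i$) is fine, but the geometric setup you need does not exist. The crux is your assumption that inside a single big annulus $\hat A_i$ you can find a shift $\tau_i\in G(\hat A_i)$ so that $A_i^{(k)}=\tau_i^k(A_i)$, $k\geq 0$, are pairwise disjoint big annuli forming a null family accumulating at an ``interior end or essential curve''. By definition, a big annulus around $x$ has both boundary curves bounding stable neighborhoods of $x$, hence each boundary curve separates the two boundary components $\gamma^\pm$ of $\hat A_i$. Consequently any collection of pairwise disjoint big annuli inside $\hat A_i$ is \emph{linearly ordered} between $\gamma^-$ and $\gamma^+$; a curve lying in a small one--ended neighborhood of an interior end $y$ of $\hat A_i$ cannot separate $\gamma^-$ from $\gamma^+$, so such a collection can never accumulate to an interior end. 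The only possible accumulation is onto $\gamma^\pm$, and then your infinite product $P_i$ has support whose closure meets $\partial\hat A_i$, so $P_i\notin G(\hat A_i)$ (and, more to the point, the resulting $\cA_1=\bigsqcup_{i,j}B_i^{(j)}$ is not a locally finite disjoint union of big annuli, hence not a brick). The appeal to a ``surface analogue of \Cref{lem:shift}'' is exactly where this breaks: that shift lives on all of $\S_U$ and sends bricks to bricks by pushing towards the end $x$; there is no analogue supported in a single big annulus, because a big annulus does not contain $x$.

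The paper sidesteps this obstruction by running the swindle \emph{globally} rather than annulus--by--annulus: it inserts the extra big annuli into the complementary regions \emph{between} the $A_j$'s in $\cA$ (which are themselves big annuli around $x$, by the definition of a brick), using a triangular pattern in which the block inserted after $A_j$ carries copies of $f_1^{\pm1},\ldots,f_j^{\pm1}$. The resulting $h_1$ and $h_2=h_1f^{-1}$ are then seen to have alternating support by two different bracketings of the same infinite word. The point is that these inserted big annuli accumulate only at $x$, which is exactly what the definition of a brick allows; your local scheme tries to create an artificial accumulation point inside each $\hat A_i$, and the linear order on big annuli around $x$ forbids that.
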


\begin{proof}
  The brick $\cA$ is the disjoint union of big annuli $A_j$ ordered linearly,
  with $A_{j+1}$ separating $A_j$ from $x$. Denote by $f_j=f|A_j$ the
  restriction of $f$ to $A_j$. So we can encode $f$ as
  $(f_1,f_2,\cdots)$. We will economize on the notation and write this
  as
  $$\red{1\quad 2 \quad 3\quad 4 \quad\cdots}$$
  Now consider the map defined by
  $$\red{1}\blue{\overline 1 1}\red{2}\blue{\overline 1 \overline 2 1
    2}\red{3} \blue{\overline 1 \overline 2 \overline 3 123}\red{4}
  \blue{\overline 1 \overline 2 \overline 3 \overline 4\cdots}$$ where each of the numbers represents a homeomorphism supported on a
        big annulus,
        and we always assume that between any two
        consecutive such big annuli there is a big annulus (which can
        then be subdivided into any finite number of big annuli). E.g., each occurence of $\overline 1$ denotes the homeomorphism that does exactly $f_1^{-1}$ on a single big annulus that is separated by big annuli from the supports of each other map. Thus we are inserting two big annuli (with separation)
  between $A_1$ and $A_2$ and we put on them copies of $f_1^{-1}$ and
  $f_1$ respectively, inserting 4 big annuli between $A_2$ and $A_3$
  etc. Call this map $h_1$ and let $h_2=h_1f^{-1}$. Thus $h_2$ is
  obtained from the above sequence by restricting to the blue
  numbers. Each block of blue numbers between two red numbers defines
  a map on a single big annulus with alternating supports:
  $$\red{1}\overbrace{\blue{\overline 1}\blue{1}}\red{2}\overbrace{\blue{\overline 1 \overline 2}\blue {1
    2}}\red{3} \overbrace{\blue{\overline 1 \overline 2 \overline 3}\blue{123}}\red{4}\overbrace{\blue{\overline 1 \overline 2 \overline 3 \overline 4}\blue{1234}}\cdots$$
   Therefore $h_2$ has alternating supports. 
  
  But so does $h_1$:
$$\underbrace{\red{1}\blue{\overline 1}}\underbrace{\blue{1}\red{2}\blue{\overline 1 \overline 2}}\underbrace{\blue {1
    2}\red{3} \blue{\overline 1 \overline 2 \overline 3}} \underbrace{\blue{123}\red{4}
  \blue{\overline 1 \overline 2 \overline 3 \overline 4}}\cdots$$
   Here we get alternating supports corresponding to the groups of
        numbers indicated after adding a big annulus on each side of
        the group (and using the definition of a brick that guarantees
        that there is a big annulus to the left of the
        first group).
\end{proof}

The culmination of this step is to combine these two lemmas in order
to write a map supported on a brick as a product of two commutators,
each of which are also supported on the brick.

\begin{lemma}\label{lem:fragcommutators}
	Any $h \in G(\S_U)$ supported on a brick can be written as
        $h=h_{1}h_{2}$ so that each $h_{i}\in G(\S_{U})$ is supported
        on a brick $\cA_{i}$ in $\S_{U}$ and can be written as a single commutator in $G(\cA_{i})$. 
\end{lemma}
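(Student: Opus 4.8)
The plan is to combine the two lemmas of this subsection, \Cref{lem:EMfragment} and \Cref{lem:altsuppcomm}, essentially verbatim. Suppose $h \in G(\S_U)$ is supported on a brick, so that $h \in G(\cA)$ for some brick $\cA$ of $\S_U$. First I would invoke the Eilenberg--Mazur swindle, \Cref{lem:EMfragment}, to produce two bricks $\cA_1, \cA_2$ in $\S_U$ and elements $h_i \in G(\cA_i)$, each with alternating support, such that $h = h_1 h_2$. Then I would apply \Cref{lem:altsuppcomm} to each $h_i$ separately: since $h_i \in G(\cA_i)$ has alternating support, it can be written as a single commutator of maps in $G(\cA_i)$. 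Concatenating, $h = h_1 h_2$ is a product of two maps, each supported on a brick in $\S_U$ and each a single commutator in the corresponding $G(\cA_i)$. That is the entire argument.

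The only bookkeeping point worth recording is that each $h_i$ genuinely lies in $G(\S_U)$, as the statement asserts. This holds because any brick $\cB$ of $\S_U$ is disjoint from a big annulus adjacent to $\partial \S_U$ (no brick meets the ``outermost'' annulus $Y_0$, as noted after \Cref{lem:telnbhdannuli}), so there is a stable sub-neighborhood $\S_{U'} \subset \S_U$ with $\cB \subset \S_{U'}$ and with $U \setminus U'$ containing all types of ends of $U$ other than possibly $x$ — namely the ends of that adjacent big annulus, which by \Cref{lem:telnbhdannuli} contains all such types. Hence $G(\cB) \subset G(\S_U)$, and in particular $h_1, h_2 \in G(\S_U)$.

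There is no real obstacle here: all of the substantive work has already been carried out, the combinatorial swindle in \Cref{lem:EMfragment} and the passage from alternating support to a commutator in \Cref{lem:altsuppcomm}. This lemma is purely the packaging step, recording the conclusion of Step 2 in the form — ``a product of two single commutators, each supported on a brick of $\S_U$'' — that will be the input to the diagonalization of Step 3.
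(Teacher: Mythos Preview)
Your proposal is correct and matches the paper's approach exactly: the paper does not even give a separate proof of this lemma, but simply states it as the culmination of combining \Cref{lem:EMfragment} and \Cref{lem:altsuppcomm}. Your additional bookkeeping remark that $G(\cB)\subset G(\S_U)$ for any brick $\cB$ is a nice clarification the paper leaves implicit.
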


\subsection{Step 3: Diagonalization}\label{ssec:step3S}

Again, we next want to find a ``good'' brick. The first lemma is identical to the end space case. 

\begin{lemma}\label{lem:diagonalS}
  Let $\cA$ be a brick and $\cA_1,\cA_2,\cdots$ pairwise disjoint
  subbricks. Then 
  \begin{enumerate}[(i)]
    \item there is some $i\geq 1$ such that any homeomorphism of $\cA_i$
      extends to a homeomorphism of $\S$ supported on $\cA$ that belongs to $g_iW$, and
      \item moreover, every homeomorphism of $\cA_i$
        extends to a homeomorphism of $\S$ supported on $\cA$ that belongs to $W^2$.
  \end{enumerate}
\end{lemma}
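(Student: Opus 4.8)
The plan is to copy, essentially verbatim, the proof of \Cref{lem:diagonal}: the only facts used there are that $G$ is covered by countably many left translates $g_iW$, that disjoint self-homeomorphisms of the $\cA_i$ assemble into a single homeomorphism of the ambient space supported on $\cA$, and that $W$ is symmetric, all of which are available unchanged in the surface setting.

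For part (i) I would argue by contradiction. Suppose that for every $i\geq 1$ there is a homeomorphism $h_i$ of $\cA_i$ that does not extend to a homeomorphism of $\S$ supported on $\cA$ that belongs to $g_iW$. Let $h$ be the map that agrees with $h_i$ on each $\cA_i$ and with the identity on the rest of $\S$. Since the $\cA_i$ are pairwise disjoint subbricks of $\cA$ and the big annuli making them up form a locally finite family in $\S_U$, near every point of $\S$ the map $h$ coincides either with a single $h_i$ or with the identity; hence $h$ is a continuous bijection of $\S$, and therefore a homeomorphism by invariance of domain. It is supported on $\cA$ by construction, so $h\in g_iW$ for some $i$; but then $h|_{\cA_i}=h_i$, contradicting the choice of $h_i$.

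For part (ii), given a homeomorphism $h$ of $\cA_i$, I would use (i) to produce $\tilde h\in g_iW$ supported on $\cA$ with $\tilde h|_{\cA_i}=h$, and, applying (i) to the identity homeomorphism of $\cA_i$, an element $\tilde f\in g_iW$ supported on $\cA$ with $\tilde f|_{\cA_i}=\id$. Then $\tilde f^{-1}\tilde h$ is supported on $\cA$, restricts to $h$ on $\cA_i$, and lies in $(g_iW)^{-1}(g_iW)=W^{-1}W=W^2$ since $W$ is symmetric.

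I do not expect a genuine obstacle: the argument is formal once one knows that disjoint self-homeomorphisms of subbricks patch together to a homeomorphism of $\S$. The only point I would pause on, and the one place the surface case even formally departs from the end-space case, is verifying that this glued-and-extended map really is a homeomorphism of $\S$ (not merely of each piece): in the end-space setting one used that a continuous bijection of a compact Hausdorff space is a homeomorphism, whereas here one invokes invariance of domain together with the local finiteness of the big annuli constituting a brick. All the other steps go through \emph{mutatis mutandis}.
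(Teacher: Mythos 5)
Your proof is correct and matches the paper's, which simply declares the proof of \Cref{lem:diagonalS} to be ``exactly the same proof as the proof of \Cref{lem:diagonal}'' — you have transplanted that argument verbatim. Your extra remark on why the glued map is a homeomorphism of the noncompact surface (local finiteness of the big annuli composing a brick) is the one point worth spelling out, though you can sidestep invariance of domain by noting that the inverse map is assembled from the $h_i^{-1}$ and the identity in the same locally finite way and is hence continuous.
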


\begin{proof}
  This is exactly the same proof as the proof of \Cref{lem:diagonal}.
\end{proof}

Next we upgrade this approximation. Notably, here we will only be doing it for maps with alternating support. 

\begin{lemma}\label{lem:diagonal2S}
	Let $\cA$ be a brick. Then there exists a subbrick $\cZ$ of $\cA$ so that any homeomorphism with alternating support on $\cZ$ is in $W^{8}$. 
\end{lemma}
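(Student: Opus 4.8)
The plan is to run the proof of \Cref{lem:diagonal2} (the end-space analogue) almost verbatim, substituting \Cref{lem:altsuppcomm} for \Cref{lem:anderson}. This substitution is the reason the conclusion must be restricted to homeomorphisms with alternating support: \Cref{lem:altsuppcomm} is the only ``write-as-a-single-commutator'' input available in the surface setting, since (unlike for Stone spaces) there is no shift supported on a single brick.

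First I would apply \Cref{lem:diagonalS} twice. Choosing a sequence of pairwise disjoint subbricks of $\cA$ and applying \Cref{lem:diagonalS}(ii) produces a subbrick $\cB\subseteq\cA$ with the property that every homeomorphism of $\cB$ extends to a homeomorphism of $\S$ supported on $\cA$ that lies in $W^{2}$. Repeating the argument with $\cB$ in place of $\cA$ produces a subbrick $\cC\subseteq\cB$ with the property that every homeomorphism of $\cC$ extends to a homeomorphism of $\S$ supported on $\cB$ that lies in $W^{2}$. Let $\cZ$ be any subbrick of $\cC$ (hence a subbrick of $\cA$); this is the subbrick we claim works.

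Now let $h$ be a homeomorphism with alternating support on $\cZ$. By \Cref{lem:altsuppcomm} we may write $h=[u,v]$ for some $u,v\in G(\cZ)$, and since $\cZ\subseteq\cC$ both $u$ and $v$ are supported on $\cC$. The map $u$ restricts to a homeomorphism of $\cC$, so by the defining property of $\cC$ there is $\bar u\in W^{2}$ supported on $\cB$ with $\bar u|_{\cC}=u$; in particular $\bar u$ preserves $\cC$ and is the identity off $\cB$. The map $v$ is supported on $\cC\subseteq\cB$, hence restricts to a homeomorphism of $\cB$, so by the defining property of $\cB$ there is $\bar v\in W^{2}$ supported on $\cA$ with $\bar v|_{\cB}=v$; in particular $\bar v$ preserves $\cB$ and $\cC$, is the identity on $\cB\setminus\cC$, and is the identity off $\cA$.

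It then remains to verify $[\bar u,\bar v]=h$, which is a routine support computation identical to the one in the proof of \Cref{lem:diagonal2}: on $\cC$ we have $\bar u=u$ and $\bar v=v$, so the commutator equals $[u,v]=h$ there; on $\cB\setminus\cC$ the map $\bar v$ is the identity and $\bar u$ preserves $\cB\setminus\cC$, so the commutator is trivial; on $\cA\setminus\cB$ the map $\bar u$ is the identity and $\bar v$ preserves $\cA\setminus\cB$, so the commutator is trivial; and off $\cA$ everything is the identity. Since $h$ is supported on $\cZ\subseteq\cC$, this shows $[\bar u,\bar v]=h$ globally, whence $h=\bar u\,\bar v\,\bar u^{-1}\,\bar v^{-1}\in W^{8}$ (using that $W$ is symmetric). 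I do not expect a genuine obstacle: the only point needing care is the bookkeeping in this last step, namely that lifting $u$ just one level (to $\cB$) and $v$ two levels (to $\cA$), together with the nesting $\cZ\subseteq\cC\subseteq\cB\subseteq\cA$, is exactly what forces $[\bar u,\bar v]$ to be supported on $\cC$ and therefore to coincide with $h$.
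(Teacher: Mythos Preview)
Your proof is correct and follows essentially the same approach as the paper. The only difference is a harmless redundancy: you introduce an extra subbrick layer $\cZ\subseteq\cC$, mirroring the end-space proof of \Cref{lem:diagonal2} (where \Cref{lem:anderson} needs a strictly larger brick $\cC$ to house the commutator of a map supported on $\cZ$). In the surface setting \Cref{lem:altsuppcomm} already produces $u,v\in G(\cZ)$, so the paper simply takes $\cZ$ to be your $\cC$ and omits the additional descent.
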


\begin{proof}
	Apply the previous lemma to $\cA$ (and an arbitrary pairwise
        disjoint collection of subbricks) and label the resulting
        subbrick $\cB$. Repeat this process to $\cB$ and label the
        subbrick $\cZ$. Let $f \in G(\cZ)$ have alternating
        support. Then by \Cref{lem:altsuppcomm}, $f = [u,v]$ for $u,v
        \in G(\cZ)$. The support of $u$ is contained in $\cZ$ and so
        we can apply the previous lemma to find a $\bar{u}$ such that
        $\bar{u} \in W^{2}$, $\bar{u}$ is supported on $\cB$, and
        $\bar{u}\vert_{\cZ} = u$. Similarly, the support of $v$ is
        contained in $\cB$ and so we can find a $\bar{v}$ such that
        $\bar{v} \in W^{2}$, $\bar{v}$ is supported on $\cA$, and
        $\bar{v}\vert_{\cB}= v$. Thus we have $h = [\bar{u},\bar{v}]
        \in W^{8}$.
\end{proof}

\subsection{Step 4: Pigeonhole}\label{ssec:step4S}

Next we see that, except for a finite error, we can push any brick into a specific subbrick.

\begin{LEM} \label{lem:pigeonholeS}
	For any brick $\cA$ and subbrick $\cB$ there exists $a_{1},a_{2} \in W^{2}$ so that for all but finitely many annuli $A$ in $\cA$, either $a_{1}A \in \cB$ or $a_{2}A \in \cB$. 
\end{LEM}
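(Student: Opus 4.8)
The plan is to mimic the pigeonhole argument used in \Cref{lem:pigeonhole} for the end space case, now carried out for the surface setting. First I would set up uncountably many "colored" versions of the brick $\cB$ inside a complementary region and use a counting argument to locate two homeomorphisms lying in a common translate $g_iW$, whose ratio then lies in $W^2$.

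More precisely, here is the approach. Write $\cA=\bigsqcup_i A_i$ as a disjoint union of big annuli. Let $\cZ$ be a brick that is disjoint from $\cA$ and is contained in a complementary component of $\cA$ (we may take $\cZ$ to be a subbrick of the complementary brick to $\cA$; by \Cref{lem:bigannulusproperties} any brick can be shrunk to such a $\cZ$). Subdivide $\cZ=\bigsqcup_{i=1}^\infty \cZ_i$ where each $\cZ_i$ is itself a brick. Fix uncountably many infinite subsets $\Lambda_\alpha\subset\N$ that pairwise intersect in finite sets, and set $\cZ_\alpha=\bigsqcup_{i\in\Lambda_\alpha}\cZ_i$. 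Note each $\cZ_\alpha$ and its complement in $\S_U$ (relative to $x$, i.e. with a neighborhood of $x$ and a big annulus near $\partial\S_U$ removed appropriately) are both bricks, so by \Cref{lem:bigannulusproperties} there is a homeomorphism $f_\alpha$ of $\S_U$ fixing $x$ interchanging $\cZ_\alpha$ with its complement. Since $G$ is covered by countably many translates $g_iW$, by pigeonhole there exist $\alpha\neq\beta$ with $f_\alpha,f_\beta\in g_iW$ for the same $i$; then $a_1:=f_\beta^{-1}f_\alpha\in W^2$. One checks as in \Cref{lem:pigeonhole} that $a_1$ carries $A_i$ into $\cZ_\beta\subset\cZ$ for all sufficiently large $i$: indeed $f_\alpha(A_i)\subset\cZ_\alpha$ for all $i$, and since $\cZ_\alpha\cap\cZ_\beta$ is disjoint from a neighborhood of $x$ while $A_i\to x$, the image $f_\alpha(A_i)$ misses $\cZ_\beta$ for large $i$, hence $f_\beta^{-1}f_\alpha(A_i)\subset\cZ_\beta$.

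This already handles all but finitely many annuli with a single element $a_1$, which is in fact stronger than the two-element conclusion stated. The only subtlety — and the reason the statement allows two elements $a_1,a_2$ — is matching the \emph{colored} structure: $\cZ$ is merely a brick and we need the images of the $A_i$ to land inside the chosen subbrick $\cB$, and moreover a big annulus must map to a subsurface containing all required end types. To arrange this, I would take $\cZ$ at the outset to be a subbrick of $\cB$ itself (possible since any brick contains subbricks and all bricks in $\S_U$ are homeomorphic via maps of $\S_U$), so that $\cZ\subset\cB$ and the conclusion $a_1 A\in\cB$ follows for all but finitely many $A$. If one instead needs to split the work — for instance if the argument is organized so that odd-indexed and even-indexed annuli are routed through two different auxiliary bricks — then one runs the pigeonhole twice, producing $a_1$ handling one infinite subfamily and $a_2$ handling the complementary one, and each cofinitely covers its family; taking the two together covers all but finitely many annuli of $\cA$.

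The main obstacle I anticipate is bookkeeping the coloring/end-type constraint: one must ensure each $f_\alpha$ is color-preserving and that the images $f_\alpha(A_i)$, which are big annuli, map into subsurfaces of $\cZ$ still containing all end types of $U$ other than possibly $x$ — this is exactly what \Cref{lem:bigannulusproperties} and the definition of a brick (all complementary components contain all end types) are set up to guarantee, so the argument goes through, but the verification that "sufficiently large $i$" works uses crucially that $\cZ_\alpha\cap\cZ_\beta$ is confined away from $x$ while the $A_i$ exit the end $x$. A secondary point is confirming that $f_\beta^{-1}f_\alpha\in W^2$ rather than merely $W\cdot W$ in the wrong order — this is immediate from symmetry of $W$, as $f_\beta\in g_iW$ gives $f_\beta^{-1}\in W g_i^{-1}$, so $f_\beta^{-1}f_\alpha\in Wg_i^{-1}g_iW=W^2$. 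No heavy machinery beyond the already-established \Cref{lem:bigannulusproperties}, \Cref{lem:diagonal2S}, and the classification of surfaces is required.
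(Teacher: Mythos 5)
There are two genuine gaps in the proposal. The first is that the ``interchange'' homeomorphism $f_\alpha$ you invoke does not exist on a surface. Any homeomorphism of $\S_U$ fixing $x$ must carry $\partial\S_U$ to itself, and therefore preserves the linear order on nested separating curves (hence on the big annuli of any decomposition). A genuine swap $f_\alpha(\cZ_\alpha)=\cZ_\alpha^c$ and $f_\alpha(\cZ_\alpha^c)=\cZ_\alpha$ would force an order-reversal among interleaved annuli and is impossible; \Cref{lem:bigannulusproperties} only produces a \emph{shift} (sending each annulus to the next gap and each gap to the next annulus), and under a shift only the one-sided inclusion ``complement of $\cZ_\alpha$ maps into $\cZ_\alpha$'' holds, never the two-sided swap available in the totally disconnected Stone-space case of \Cref{lem:pigeonhole}. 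Your writeup invokes \Cref{lem:bigannulusproperties} to get $f_\alpha(\cZ_\alpha)=\cZ_\alpha^c$ but then uses the other inclusion $f_\alpha(A_i)\subset\cZ_\alpha$ without justifying it; that inclusion is what the pigeonhole actually needs, and it is a property of the carefully constructed shift, not a consequence of the bricks being homeomorphic.

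The second gap is the incompatible requirements you place on $\cZ$. You begin by taking $\cZ$ disjoint from $\cA$ so that every $A_i$ lies in the complement of $\cZ_\alpha$ (which is exactly what makes ``$f_\alpha(A_i)\subset\cZ_\alpha$ for all $i$'' plausible), but then to match the conclusion ``$a_1A\in\cB$'' you declare that you will instead take $\cZ\subset\cB\subset\cA$. You cannot do both: with $\cZ\subset\cA$, the infinitely many annuli of $\cA$ that \emph{belong to} $\cZ_\alpha$ are carried by the shift $f_\alpha$ out of $\cZ_\alpha$ into a gap, so $a_1$ fails for all of them, and the argument with a single element $a_1$ collapses. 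This is precisely why the lemma (and the paper's proof) genuinely requires two elements: the paper sets $\cB_\alpha,\cB_\beta\subset\cB\subset\cA$, uses $a_1=f_\beta^{-1}f_\alpha$ for the annuli in $\cA\setminus\cB_\alpha$ and $a_2=f_\alpha^{-1}f_\beta$ for those in $\cA\setminus\cB_\beta$, with only the finite set $\cB_\alpha\cap\cB_\beta$ left over. Your remark that ``$a_1$ alone is stronger than the two-element conclusion'' and that the second element is only a concession to coloring is therefore a sign of the error rather than an optimization; with the correct shift map and the correct relation $\cZ\subset\cB\subset\cA$, the second element is indispensable.
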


\begin{proof}
	Write $\cB = \bigsqcup_{i=1}^{\infty} B_{i}$, with each
        $B_{i}$ a big annulus. Let $\Lambda_\alpha$, $\alpha\in\R$ be
        an uncontable collection of infinite subsets of $\mathbb
        N\smallsetminus\{1\}$ 
        so
        that if $\alpha\neq\beta$, then
        $\Lambda_\alpha\cap\Lambda_\beta$ is finite. Write
        $\cB_{\alpha}$ for the subbrick of $\cB$ consisting of annuli
        indexed by $\Lambda_{\alpha}$. Then for every $\alpha$ choose
        a homeomorphism $f_\alpha\in G(\S_U)$ with the following property. Say $\Lambda_\alpha=\{n_1,n_2,n_3,\cdots\}$ with $1<n_1<n_2<n_3<\cdots$. Then $f_\alpha$ sends $B_{n_i}$ homeomorphically onto the big annulus cobounded by $B_{n_i}$ and $B_{n_{i+1}}$ and this component onto $B_{n_{i+1}}$. Note that these maps exist by \Cref{lem:bigannulusproperties}.
	
	Since the collection $\{\Lambda_{\alpha}\}_{\alpha \in \R}$ is uncountable there exist some pair $\alpha \neq \beta$ so that $f_{\alpha}$ and $f_{\beta}$ are in the same left translate of $W$. See \Cref{fig:pigeonholes} for a schematic of the arrangement of these annuli and the maps $f_{\alpha}$ and $f_{\beta}$.  Thus we have that $f_{\alpha}^{-1} f_{\beta}$ and $f_{\beta}^{-1}f_{\alpha}$ are both in $W^{2}$. Set $a_{1} =f_{\beta}^{-1} f_{\alpha}$ and $a_{2} = f_{\alpha}^{-1}f_{\beta}$. Let $\cA_{0} = \cB_{\alpha} \cap \cB_{\beta}$, note that this set is finite, let $\cA_{1} = \cA \setminus \cB_{\alpha}$ and $\cA_{2} = \cA \setminus \cB_{\beta}$. Note that $\cA = \cA_{0} \cup \cA_{1} \cup \cA_{2}$. Now given any annulus $A$ in $\cA$ that is not in the finite exceptional set $\cA_{0}$ we have that either $A \in \cA_{1}$ or $A \in \cA_{2}$. If $A \in \cA_{1}$, then $a_{1}A \in \cB_{\beta} \subset \cB$ and if $A \in \cA_{2}$, then $a_{2}A \in \cB_{\alpha} \subset \cB$. 
	
	\begin{figure}[ht!]
	    \centering
	    \def\svgwidth{\textwidth}

\begingroup%
  \makeatletter%
  \providecommand\color[2][]{%
    \errmessage{(Inkscape) Color is used for the text in Inkscape, but the package 'color.sty' is not loaded}%
    \renewcommand\color[2][]{}%
  }%
  \providecommand\transparent[1]{%
    \errmessage{(Inkscape) Transparency is used (non-zero) for the text in Inkscape, but the package 'transparent.sty' is not loaded}%
    \renewcommand\transparent[1]{}%
  }%
  \providecommand\rotatebox[2]{#2}%
  \newcommand*\fsize{\dimexpr\f@size pt\relax}%
  \newcommand*\lineheight[1]{\fontsize{\fsize}{#1\fsize}\selectfont}%
  \ifx\svgwidth\undefined%
    \setlength{\unitlength}{496.77293197bp}%
    \ifx\svgscale\undefined%
      \relax%
    \else%
      \setlength{\unitlength}{\unitlength * \real{\svgscale}}%
    \fi%
  \else%
    \setlength{\unitlength}{\svgwidth}%
  \fi%
  \global\let\svgwidth\undefined%
  \global\let\svgscale\undefined%
  \makeatother%
  \begin{picture}(1,0.34853168)%
    \lineheight{1}%
    \setlength\tabcolsep{0pt}%
    \put(0,0){\includegraphics[width=\unitlength,page=1]{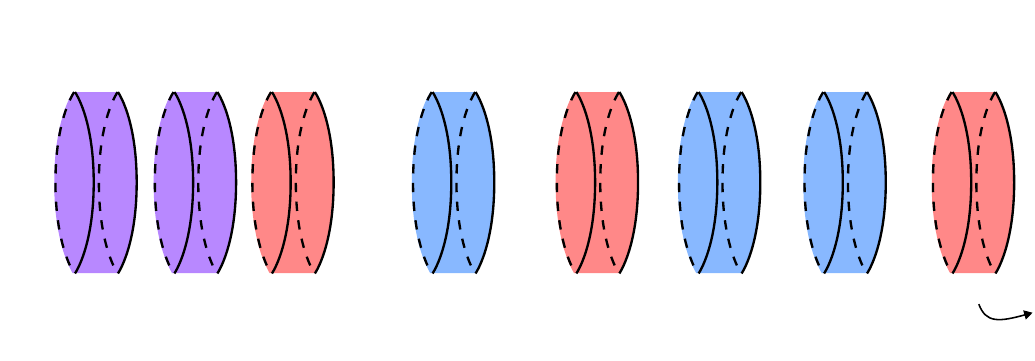}}%
    \put(0.09319133,0.27333329){\color[rgb]{0,0,0}\makebox(0,0)[lt]{\lineheight{0}\smash{\begin{tabular}[t]{l}$\mathcal{B}_{\alpha}\cap\mathcal{B}_{\beta}$\\\end{tabular}}}}%
  \put(0,0){\includegraphics[width=\unitlength,page=2]{pigeonholes.pdf}}%
    \put(0.36929333,0.32622826){\color[rgb]{0,0,0}\makebox(0,0)[lt]{\lineheight{0}\smash{\begin{tabular}[t]{l}$f_{\alpha}$\end{tabular}}}}%
    \put(0.51527832,0.00600423){\color[rgb]{0,0,0}\makebox(0,0)[lt]{\lineheight{0}\smash{\begin{tabular}[t]{l}$f_{\beta}$\end{tabular}}}}%
    \put(0.32205015,0.21801814){\color[rgb]{0,0,0}\makebox(0,0)[lt]{\lineheight{0}\smash{\begin{tabular}[t]{l}$\mathcal{B}_{\alpha}$\\\end{tabular}}}}%
    \put(0.47781879,0.10630789){\color[rgb]{0,0,0}\makebox(0,0)[lt]{\lineheight{0}\smash{\begin{tabular}[t]{l}$\mathcal{B}_{\beta}$\\\end{tabular}}}}%
  \end{picture}%
\endgroup%

		    \caption{A schematic of the pigeonhole argument. The annuli in purple represent the finitely many annuli in $\cB_{\alpha}\cap \cB_{\beta}$. The red and blue annuli are the annuli of $\cB_{\alpha}$ and $\cB_{\beta}$, respectively.}
	    \label{fig:pigeonholes}
	\end{figure}
\end{proof}

Finally we can put all of these pieces together to see that any map supported on a brick is in $W^{72}$. 

\begin{lemma} \label{lem:bricksteinhausS}
	Every $h \in G(\S_U)$ supported on a brick is in $W^{72}$. 
\end{lemma}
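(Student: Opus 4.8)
The plan is to reduce everything to \Cref{lem:diagonal2S} after first applying the Eilenberg--Mazur swindle, while keeping careful track of powers of $W$: the final exponent $72 = 2 \cdot 3 \cdot (2+8+2)$ records the two pieces of the swindle, the three cases of the pigeonhole, and the ``conjugate into the good subbrick, apply \Cref{lem:diagonal2S}, conjugate back'' pattern.

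First I would apply \Cref{lem:EMfragment} to $h \in G(\S_U)$ to write $h = h_1 h_2$ with each $h_i \in G(\S_U)$ supported on a brick $\cA_i$ and having alternating support, and then \Cref{lem:altsuppcomm}, annulus by annulus, to write each $h_i$ as a product $\prod_j [u_j, v_j]$ of commutators of maps supported on the individual annuli of $\cA_i$. Since $72 = 2 \cdot 36$, it then suffices to show that such an $h_i$, henceforth written $g = \prod_j [u_j, v_j]$ and supported on a brick $\cA = \bigsqcup_j A_j$, lies in $W^{36}$.

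Next I would apply \Cref{lem:diagonal2S} to $\cA$ to get a subbrick $\cB$ of $\cA$ with the property that $[U', V'] \in W^8$ for every $U', V' \in G(\cB)$ (this is exactly what the proof of \Cref{lem:diagonal2S} establishes, alternating support being used there only to produce the commutator presentation), and then \Cref{lem:pigeonholeS} to the pair $(\cA, \cB)$ to obtain $a_1, a_2 \in W^2$ such that, outside a finite set $\cA^{(0)}$ of annuli, every annulus $A$ of $\cA$ satisfies $a_1 A \in \cB$ or $a_2 A \in \cB$. Partition the annuli of $\cA$ as $\cA^{(0)} \sqcup \cA^{(1)} \sqcup \cA^{(2)}$ with $\cA^{(1)} = \{A : a_1 A \in \cB\}$ and $\cA^{(2)}$ the rest, and set $g_k = \prod_{A_j \in \cA^{(k)}} [u_j, v_j]$, so that $g = g_0 g_1 g_2$ and each $g_k$ is a single commutator of maps supported on the disjoint annuli in $\cA^{(k)}$. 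Conjugation by $a_1$ carries $g_1$ to a commutator of maps supported on the annuli $a_1 A_j \subset \cB$, hence to an element of $W^8$, so $g_1 \in W^2 W^8 W^2 = W^{12}$, and symmetrically $g_2 \in W^{12}$. For $g_0$, supported on only finitely many big annuli, \Cref{lem:bigannulusproperties} together with the remarks after \Cref{lem:telnbhdannuli} provide $\gamma \in G(\S_U)$ carrying those annuli into $\cB$; since $W^2$ is dense in $G(\S_U)$ and this is an open condition, take $\tilde\gamma \in W^2$ with the same property, so that $\tilde\gamma g_0 \tilde\gamma^{-1}$ is a commutator of maps in $G(\cB)$, hence in $W^8$, and $g_0 \in W^{12}$. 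Thus $g = g_0 g_1 g_2 \in W^{36}$ and $h = h_1 h_2 \in W^{72}$.

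The hard part, beyond honest bookkeeping of exponents, is making sure the reduction to \Cref{lem:diagonal2S} is legitimate: one needs that conjugating a commutator of maps supported on a sub-collection of $\cA$'s annuli produces a commutator of maps supported on the image annuli (immediate, since supports simply get carried over, and the relevant image annuli lie in $\cB$ by \Cref{lem:pigeonholeS}), and that the $W^8$ bound of \Cref{lem:diagonal2S} applies to commutators of maps merely supported on part of the distinguished subbrick $\cB$ rather than on all of it (which is precisely the content of that lemma's proof). Granting these, the remainder is the now-standard threefold-pigeonhole-inside-twofold-Eilenberg--Mazur assembly.
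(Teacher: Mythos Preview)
Your proof is correct and follows essentially the same route as the paper: apply \Cref{lem:EMfragment} to reduce to maps with alternating support, invoke \Cref{lem:diagonal2S} to find the good subbrick, use \Cref{lem:pigeonholeS} to produce $a_1,a_2\in W^2$, split into three pieces (two infinite, one finite), and conjugate each piece into the good subbrick to get the $W^{12}$ bound per piece. The only cosmetic difference is that you explicitly unpack the alternating-support maps as commutators and observe that the proof of \Cref{lem:diagonal2S} actually yields $[u,v]\in W^8$ for arbitrary $u,v\in G(\cZ)$, whereas the paper carries the phrase ``alternating support'' through to the end and invokes the stated form of \Cref{lem:diagonal2S}; these are equivalent via \Cref{lem:altsuppcomm}.

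One small point worth tightening: when you write ``this is an open condition'' for $\tilde\gamma$ carrying the finitely many annuli of $\cA^{(0)}$ into $\cB$, note that the supports involved are not compact, so you should phrase the approximation via the (compact) boundary curves of those annuli---a $\tilde\gamma$ close enough to $\gamma$ in the compact-open topology sends the boundary curves into small annular neighborhoods of their $\gamma$-images, hence sends the annuli themselves into (slight enlargements of) the target annuli in $\cB$. The paper glosses over this in the same way, so this is not a gap unique to your argument.
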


\begin{proof}
	First apply \Cref{lem:EMfragment}
	to $h$ to find $h_{1},h_{2} \in G(\S_U)$ with support on bricks $\cA_{1}$ and $\cA_{2}$, respectively. Abusing notation, we replace $h$ with one of the $h_{i}$ and will show that such a map is in $W^{36}$.
	
	Apply \Cref{lem:diagonal2S} to $\cA$ to find $\cZ$ so that any map in $G(\cZ)$ with alternating support is in $W^{8}$. Next apply \Cref{lem:pigeonholeS} to $\cZ$ as a subbrick of $\cA$ to obtain maps $a_{1},a_{2} \in W^{2}$. Write $\cA = F \sqcup X \sqcup Y$ where $F$ is the finite collection of annuli for which \Cref{lem:pigeonholeS} fails, $X$ is the set of annuli for which $a_{1}A \in \cZ$ for all $A \in X$, and $Y$ is the complement of these two sets. We can write $h = h_{F}h_{X}h_{Y}$ for the restrictions of $h$ to each of these sets. Note that $a_{1}h_{X}a_{1}^{-1}$ and $a_{2}h_{Y}a_{2}^{-1}$ are two maps with alternating support in $\cZ$ and therefore $h_{X},h_{Y} \in W^{12}$.
	
	It remains to deal with $h_{F}$. 
Let $g \in G(\S_{U})$
        be a map that sends the set $F$ of annuli into $\cZ$. By assumption,
        $W^{2}$ is dense in $G(\S_{U'})$ and so we may approximate $g$
        by $g'\in W^2$ (which may not belong to $G(\S_U)$ or even be
        identity outside of $\S_U$).
        Thus we see that $g'h_{F}g'^{-1}$ is again a map
        with alternating support in $\cZ$ and hence $h_{F} \in
        W^{12}$. We conclude that $h \in W^{36}$ and thus our original
        map is contained in $W^{72}$.
\end{proof}

\subsection{Step 5: Wrapping Up}\label{ssec:step5surface}

We again first verify a version of the Steinhaus condition for a
stable neighborhood of a telescoping end. 

\begin{THM}\label{thm:globalpointedS}
	Let $\S_{U}$ be a stable neighborhood of a telescoping end
        $x$. Suppose $W$ is a symmetric set in $G=\Homeo(\S)$ such that 
	\begin{itemize}
		\item $\Homeo(\S)$ is covered by countably many sets $g_{i}W$ with $g_{i} \in \Homeo(\S)$, and 
		\item $W^{2}$ is dense in $G(\S_U)$. 
	\end{itemize}
	Then $G(\S_U) \subset W^{288}$.
\end{THM}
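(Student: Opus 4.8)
The plan is to assemble Steps~1--4 in exactly the way \Cref{thm:globalpointed} was assembled in the totally disconnected setting, the only change being a larger word-length constant. First I would fix an arbitrary $h\in G(\S_U)$ and apply \Cref{lem:fpfragmentS} to write $h=h_1h_2$ with $h_1,h_2\in G(\S_U)$, where each $h_i$ fixes some end $x_i\in G(\S_U)\cdot x$. Since $\S_U$ is a stable neighborhood of every end in the $G(\S_U)$-orbit of $x$, and since $x_i\sim x$ forces $x_i$ to be telescoping (the three conditions of \Cref{def:telescopingend} are invariant under homeomorphisms of $\S$), the decomposition $\S_U=\bigcup_i Y_i$ of \Cref{lem:telnbhdannuli} and all the brick machinery of Steps~1--4 apply to $x_i$ exactly as to $x$. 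Hence I would apply \Cref{lem:brickfragmentS} (with $x$ replaced by $x_i$) to each $h_i$, writing $h_i=h_{i,1}h_{i,2}$ with each $h_{i,j}\in G(\S_U)$ supported on a brick. This exhibits $h=h_{1,1}h_{1,2}h_{2,1}h_{2,2}$ as a product of four maps, each supported on a brick of $\S_U$.

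Next I would invoke \Cref{lem:bricksteinhausS}, which asserts---under precisely the two hypotheses on $W$ in the theorem statement---that any element of $G(\S_U)$ supported on a brick lies in $W^{72}$. Applying it to each of the four factors gives $h\in W^{72}W^{72}W^{72}W^{72}=W^{288}$, which is the claim.

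Since all the real work is carried by the preceding lemmas, there is essentially no obstacle here. The one point deserving a sentence of care is the one noted above: replacing $x$ by an end $x_i$ in its $G(\S_U)$-orbit preserves both the stability of $\S_U$ as a neighborhood and the telescoping property, so \Cref{lem:bricksteinhausS}---phrased for bricks around a telescoping end---legitimately applies to all four pieces even though they may be centered at different ends of the orbit. The bookkeeping $4\times 72=288$ (versus $4\times 24=96$ in \Cref{thm:globalpointed}) is the only remaining check; the extra factor of three over the end space case is exactly the cost of the Eilenberg--Mazur swindle of Step~2 (\Cref{lem:EMfragment}), which triples the per-brick bound and has no analogue in the totally disconnected setting.
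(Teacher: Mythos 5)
Your proof is correct and matches the paper's argument exactly: apply \Cref{lem:fpfragmentS} and then \Cref{lem:brickfragmentS} to write $h$ as a product of four elements of $G(\S_U)$ supported on bricks (each around an end in $G(\S_U)\cdot x$, which is still a telescoping end with $\S_U$ as a stable neighborhood), then apply \Cref{lem:bricksteinhausS} to each to land in $W^{72}$, giving $h\in W^{288}$. One small aside: the tripling from $24$ to $72$ is not purely the cost of the Eilenberg--Mazur swindle (that step only doubles the brick count); the remainder comes from the pigeonhole step in \Cref{lem:bricksteinhausS}, which needs two conjugators $a_1,a_2$ and a three-way split of $h$ into $h_F h_X h_Y$ to restore alternating support, yielding $36$ per brick rather than $12$.
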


\begin{proof}
	Let $h \in G(\S_U)$. We first apply \Cref{lem:fpfragmentS} and \Cref{lem:brickfragmentS} in order to write $h=h_{1}h_{2}h_{3}h_{4}$ where each $h_{i} \in G(\S_U)$ fixes an end $x_{i} \in G(\S_U) \cdot x$ so that $\S_U$ is stable with respect to $x_{i}$ and the support of $h_{i}$ is contained in a brick around $x_{i}$. Next we apply \Cref{lem:bricksteinhausS} to see that each $h_{i}$ is contained in $W^{72}$. We conclude that $h \in W^{288}$. 
\end{proof}

\begin{RMK}\label{rmk:disks}
	Let $D \subset \S$ be a closed disk. By considering a fixed
        point near the boundary as a marked point we can also obtain a version of the previous theorem for $G(D)$. That is, under the same assumptions, we have that $G(D) \subset W^{288}$. 
\end{RMK}

As in the end space case, we technically need a slight extension of
all of the results in \Cref{ssec:step2S,ssec:step3S,ssec:step4S} and
the above theorem. See \Cref{omnibus} and the discussion immediately
preceding it. In other words, we can run the previous proofs
simultaneosly on any finite collection of pairwise disjoint stable
neighborhoods of telescoping ends.

\begin{PROP}\label{multi}
  Suppose $\{U_i\}$ is a finite collection of pairwise disjoint stable
  neighborhoods of telescoping ends $x_i$, and suppose $\S_{U_i}$ is a
  pairwise disjoint collection of corresponding subsurfaces. Let
  $W\subset \Homeo(\S)$ be a symmetric set, so that countably many translates cover
  $\Homeo(\S)$, and so that $W^2$ is dense in each $G(\S_{U_i})$. Then
  any homeomorphism $h$ of $\S$ which is identity outside of
  $\cup_i\S_{U_i}$ with restrictions to each $\S_{U_i}$ belonging to
  $G(\S_{U_i})$ is in $W^{288}$.
\end{PROP}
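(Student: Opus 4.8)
The plan is to repeat the arguments of \Cref{ssec:step2S,ssec:step3S,ssec:step4S,ssec:step5surface} essentially verbatim, with the single stable neighborhood $\S_U$ replaced by the disjoint union $\vec\S=\bigsqcup_i\S_{U_i}$, a brick replaced by a \emph{multibrick} $\vec\cB=(\cB_i)_i$ (a choice, for each $i$, of a brick $\cB_i\subset\S_{U_i}$ around $x_i$ built from a decomposition as in \Cref{lem:telnbhdannuli}), and sub-multibricks defined coordinatewise. The observation that makes everything go through is that, since the $\S_{U_i}$ are pairwise disjoint, a homeomorphism of $\S$ that is the identity outside $\vec\S$ and preserves each $\S_{U_i}$ is exactly a tuple $(h_i)_i$ with $h_i\in G(\S_{U_i})$; in other words $G(\vec\S)\cong\prod_i G(\S_{U_i})$, and the $h$ in the statement is precisely an element of $G(\vec\S)$. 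Thus any construction carried out inside one $\S_{U_i}$ can be performed simultaneously and independently in all of them and then reassembled, by extension by the identity, into an honest homeomorphism of $\S$.

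Concretely, I would proceed step by step. Step 1: apply \Cref{lem:fpfragmentS} and \Cref{lem:brickfragmentS} inside each $\S_{U_i}$ to write $h|_{\S_{U_i}}=h_{1,i}h_{2,i}h_{3,i}h_{4,i}$ with each $h_{j,i}$ supported on a brick around a point of $G(\S_{U_i})\cdot x_i$, and set $h_j=(h_{j,i})_i$, so $h=h_1h_2h_3h_4$ with each $h_j$ supported on a multibrick. Step 2: apply the Eilenberg--Mazur swindle \Cref{lem:EMfragment} in each coordinate, followed by \Cref{lem:altsuppcomm} on each annulus component; since the coordinates (and the annuli within them) have pairwise disjoint supports, collecting terms writes each multibrick-supported map as a single commutator $[(u_i)_i,(v_i)_i]$ in $G(\vec\cA)$. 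Step 3: \Cref{lem:diagonalS} and \Cref{lem:diagonal2S} carry over unchanged --- in the ``glue a counterexample'' Baire argument one glues, for each index $j$ of the countable cover $\{g_jW\}$, a homeomorphism of the $j$-th sub-multibrick into a single homeomorphism of $\S$ that must then lie in some $g_jW$, a contradiction --- producing a sub-multibrick $\vec\cZ$ of any given multibrick on which every alternating-support homeomorphism lies in $W^8$.

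The step that needs the most care is the pigeonhole, \Cref{lem:pigeonholeS}: for each $\alpha$ in an uncountable almost-disjoint family $\{\Lambda_\alpha\}$ I must produce a \emph{single} homeomorphism $f_\alpha$ of $\S$ that in every coordinate $i$ simultaneously performs the shift used there on $\cB_i$; this is legitimate because the $\S_{U_i}$ are disjoint and because bricks in a fixed $\S_{U_i}$ are all homeomorphic via \Cref{lem:bigannulusproperties}. Two such $f_\alpha,f_\beta$ then lie in a common $g_jW$, giving $a_1,a_2\in W^2$, and the exceptional set of annuli on which they fail is a \emph{finite} union (over the finitely many coordinates) of finite sets, hence finite --- this is the one genuinely new point to check, together with the fact that all glued maps are honest homeomorphisms of $\S$, both immediate from disjointness. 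With this, the proof of \Cref{lem:bricksteinhausS} copies over to show any multibrick-supported map is in $W^{72}$, and the proof of \Cref{thm:globalpointedS} then yields $G(\vec\S)\subset W^{288}$, which is exactly the assertion of the proposition.
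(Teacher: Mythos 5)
Your proposal follows the paper's own route: the paper simply asserts that the single-neighborhood results of \Cref{ssec:step2S,ssec:step3S,ssec:step4S,ssec:step5surface} hold \emph{mutatis mutandis} when bricks are replaced by multibricks (exactly as \Cref{omnibus} does for end spaces), and you have spelled this out correctly, flagging the right new points --- that the coordinatewise constructions glue to honest homeomorphisms of $\S$ by disjointness, that the pigeonhole needs a \emph{single} $f_\alpha$ shifting all coordinates at once, and that the exceptional set is a finite union of finite sets. One small caveat worth recording: when you transplant the last paragraph of the proof of \Cref{lem:bricksteinhausS}, the map $g$ that moves the finite exceptional set of annuli (now spread across several $\S_{U_i}$) into the good sub-multibrick lies in $G(\vec\S)=\prod_i G(\S_{U_i})$ but in no single factor, so approximating it by some $g'\in W^2$ uses density of $W^2$ in $G(\vec\S)$ --- a slightly stronger reading than ``dense in each $G(\S_{U_i})$'' literally says, though it is exactly what is available when \Cref{multi} is invoked inside the proof of \Cref{thm:surfaceACmain}, where $W^2$ is dense in a whole identity neighborhood containing $G(\vec\S)$.
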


We are finally ready to prove one implication in Theorem A.

\begin{thm}\label{thm:surfaceACmain}
	Let $\S$ be a stable surface. If every end of $\S$ is telescoping, then AC holds for $\Homeo(\S)$. 
\end{thm}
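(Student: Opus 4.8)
The plan is to verify the Steinhaus property for $G = \Homeo(\S)$ and then invoke the Rosendal--Solecki proposition. So let $W \subset G$ be a symmetric set such that countably many translates $g_i W$ cover $G$. By \Cref{lem:baire}, there is an open neighborhood $\cU$ of the identity in $G$ so that $W^2$ is dense in $\cU$. The first step is to understand the shape of $\cU$: since $\Homeo(\S) \to \Homeo(E(\S),E_g(\S))$ is continuous (\Cref{thm:classification}), and since a neighborhood basis at the identity in $\Homeo(\S)$ is given by pointwise stabilizers of finite collections of simple closed curves, we may find a finite collection of disjoint simple closed curves $\gamma_1,\dots,\gamma_k$ cutting $\S$ into pieces so that any $g$ fixing each $\gamma_j$ (with orientation) and each complementary component setwise lies in $\cU$. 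Restricting the topology further, we may also assume each $g \in \cU'$ restricts to the identity on a neighborhood of each $\gamma_j$.

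The second step is a decomposition of $\S$ into finitely many ``nice'' pieces. Each complementary component of $\cup_j\gamma_j$ is a surface with finitely many boundary circles and some end space which is a clopen subset of $E(\S)$. Since $\S$ is stable, every point of $E(\S)$ has a stable neighborhood, and using compactness (as in \Cref{lem:stablepartition}, applied to each clopen piece $E(Y_r)$ where $Y_r$ runs over complementary components) we may refine so that $E(\S)$ is partitioned into finitely many clopen stable neighborhoods $U_1,\dots,U_N$, each $U_\ell$ being a stable neighborhood of some telescoping end $x_\ell$ (here we use the hypothesis that \emph{every} end is telescoping — combined with \Cref{lem:MRstable}, which tells us a stable neighborhood has a unique maximal class, which is then telescoping). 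Realizing each $U_\ell$ by a subsurface $\S_{U_\ell}$, and cutting $\S$ along $\partial\S_{U_\ell}$, we are left with a compact core $\S_0$ of finite type (a surface with finitely many boundary circles and no ends other than possibly some isolated punctures which we can absorb into the $\S_{U_\ell}$'s), glued to the $\S_{U_\ell}$ along their boundaries. So any $g \in \cU'$ (for this finer neighborhood) restricts to a homeomorphism of $\S_0$ fixing $\partial\S_0$ pointwise, together with homeomorphisms of each $\S_{U_\ell}$ lying in $G(\S_{U_\ell})$.

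The third step handles $\S_0$: a homeomorphism of a finite-type surface $\S_0$ fixing the boundary pointwise is a finite product of homeomorphisms each supported in an embedded closed disk (fragmentation for finite-type surfaces — this is where I'd cite the classical Fisher/fragmentation argument, or bootstrap from the disk case). By \Cref{rmk:disks}, each such disk-supported homeomorphism lies in $W^{288}$, so the $\S_0$-part of $g$ lies in a uniformly bounded power of $W$. The fourth step handles the ends: we need $W^2$ dense in each $G(\S_{U_\ell})$. This follows because the subgroup of $G$ preserving the decomposition is contained in $\cU'$ and hence in $\overline{W^2}$; more carefully, $\cU'$ contains an open subgroup of $\prod_\ell \Homeo(\S_{U_\ell})\times\Homeo(\S_0,\partial)$, so $W^2$ being dense in $\cU'$ forces $W^2$ to be dense in each factor $G(\S_{U_\ell})$. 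Then \Cref{multi} (the multi-neighborhood version of \Cref{thm:globalpointedS}) applies: the part of $g$ supported on $\cup_\ell \S_{U_\ell}$ lies in $W^{288}$. Combining with the $\S_0$-estimate, every $g \in \cU'$ lies in $W^{M}$ for a fixed $M$, so $W^M \supset \cU'$ contains an open neighborhood of the identity, giving the Steinhaus property.

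The main obstacle I expect is the bookkeeping in the second and third steps: one must be careful that the finite-type core genuinely has no non-telescoping ends hidden in it (isolated punctures are telescoping and can be treated as stable neighborhoods in their own right, and by \Cref{prop:stablefinmax} there are only finitely many maximal classes, so the decomposition into finitely many stable pieces is legitimate), and one must cite or supply the fragmentation statement for finite-type surfaces with boundary — namely that $\Homeo(\S_0,\partial\S_0)$ is generated by disk-supported homeomorphisms — which, while classical, needs to be stated precisely. The remaining glue is routine, following the template already executed in \Cref{sec:endspaces}.
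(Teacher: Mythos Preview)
Your argument has a genuine gap at the very first step: the claim that ``a neighborhood basis at the identity in $\Homeo(\S)$ is given by pointwise stabilizers of finite collections of simple closed curves'' is false. That describes the (non-archimedean) quotient topology on $\Map(\S)$, not the compact-open topology on $\Homeo(\S)$. In $\Homeo(\S)$ the set of homeomorphisms fixing a given curve pointwise --- let alone being the identity on a neighborhood of it, as you then assume for $\cU'$ --- is closed with empty interior, not open. Consequently your decomposition $g = g|_{\S_0}\cdot g|_{\cup_\ell \S_{U_\ell}}$ never gets off the ground: an arbitrary $g\in\cU$ will move the curves $\partial\S_{U_\ell}$.

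The paper handles this by working in the Freudenthal compactification $\hat\S$ with a fixed metric, so that $\cU$ contains a metric $\epsilon$-ball. If $h$ moves points by less than $\epsilon$ it moves each $\gamma_i=\partial\S_{U_i}$ only slightly, and one factors $h=h_1h_2h_3$ where $h_3$, supported in thin annular neighborhoods of the $\gamma_i$, pushes $h(\gamma_i)$ back to $\gamma_i$; only then does $hh_3^{-1}$ genuinely preserve each $\gamma_i$. The pieces $h_1$ (on the compact core) and $h_3$ are handled via Edwards--Kirby fragmentation \cite{EK1971} into three families of pairwise disjoint disks together with \Cref{rmk:disks}, costing $W^{864}$ each; this is the precise replacement for the ``fragmentation for finite-type surfaces'' you gestured at, but in the $\Homeo$ rather than $\Map$ category.

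There is a second issue you skip even after this correction: the restriction of $h_2$ to $\S_{U_i}$ need not lie in $G(\S_{U_i})$ as defined in the Standing Assumptions, since that group requires the support to miss a big annulus adjacent to $\partial\S_{U_i}$ (the ``luft''). The paper spends two further steps arranging this: first peeling off the restriction to the outermost annulus $Y_0^i$, and then --- for the remaining maps supported on the $Y_0^i$ --- a boundary-connect-sum trick that attaches a spare homeomorphic copy $\S_{Z_j^i}$ drawn from deeper inside $\S_{U_i}$ along an arc, manufacturing the missing big annulus so that \Cref{multi} applies. Your sentence ``restricts to \ldots\ homeomorphisms of each $\S_{U_\ell}$ lying in $G(\S_{U_\ell})$'' hides exactly this difficulty.
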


\begin{proof}
	We will verify the Steinhaus property. Let $W$ be a symmetric
        subset of $\Homeo(\S)$ so that countably many translates cover
        the whole group. We will show that $W^{4896}$ contains a
        neighborhood of the identity. We can apply \Cref{lem:baire} to see
        that there exists some neighborhood of the identity $\cU
        \subset \Homeo(\S)$ so that $W^{2}$ is dense in $\cU$. The
        plan is to cover the end space by surfaces of the form $\S_U$,
        represent given $h\in \Homeo(\S)$ close to the identity as a
        composition of maps supported on $\S_U$'s (or compact
        subsurfaces), and apply Proposition \ref{multi}. The
        difficulty is that homeomorphisms in $\S_U$ require an extra
        big annulus contained in $\S_U$ on which the map is identity,
        and the realizations of these big annuli have to be pairwise
        disjoint, for otherwise we cannot work simultaneously on all of
        them. 

        {\bf Step 1.} Here we construct a collection of pairwise
        disjoint $\S_{U_i}$ that cover the end space. 
	We temporarily work in the Fruedenthal compactification of
        $\S$, which we denote by $\hat{\S}$. Note that $\Homeo(\S) =
        \Homeo(\hat{\S},E(\S),E_{g}(\S))$, 
        the group of homeomorphisms that fix the ends of $\S$,
        setwise. Fix a metric on $\hat{\S}$. There will be
        $\epsilon>0$ so that if a homeomorphism moves points
        $<\epsilon$ it belongs to $\cU$. Using
        \Cref{lem:stablepartition} we can cover $E(\S)$ by a finite
        collection $U_i$ of pairwise disjoint clopen sets so that each
        $U_i$ is a stable neighborhood of one of its points
        $x_i$. Further, we can construct pairwise disjoint subsurfaces
        $\S_{U_i}$ with one boundary component $\gamma_i$ with end
        space $U_i$. We can also choose these sets and subsurfaces so
        that they have diameter $<\epsilon$. We now decompose the
        given homeomorphism $h$, assumed close to the identity, into 3
        homeomorphism, $h=h_1h_2h_3$, so that each is close to the
        identity (i.e. for every neighborhood $\cV$ of the identity,
        there is a neighborhood $\cV'$ so that if $h\in\cV'$ then
        $h_1,h_2,h_3\in \cV$). The simple closed curves $\gamma_i$ are
        moved only slightly by $h$, so we can choose $h_3$ to be
        supported on pairwise disjoint annular neighborhoods of
        $\gamma_i$ and so that $hh_3^{-1}$ fixes each $\gamma_i$
        pointwise. Then $hh_3^{-1}$ is the composition $h_1h_2$, where
        $h_1$ is a homeomorphism of the compact
        surface bounded by the $\gamma_i$ and it is close to the
        identity, and $h_2$ is supported on the union of
        $\S_{U_i}$. We can then use the Edwards-Kirby fragmentation \cite{EK1971}
        and \Cref{rmk:disks} (performed simultaneously) to see that
        both $h_1,h_3\in W^{864}$ (here we need $864=3\times 288$ because the
        fragmentation produces 3 families of pairwise disjoint
        disks). See also \cite{Mann2016,Rosendal2008}
        We are thus reduced to working with $h_2$ and we rename it to
        $h$.

        {\bf Step 2.} Write $\S_{U_i}=\cup_{j=0}^{\infty} Y^i_j$ as in
        \Cref{lem:telnbhdannuli} and let the boundary of the first
        annulus $Y^i_0$ be $\gamma_i\sqcup\delta_i$. Exactly as in
        Step 1 we can write $h=h_1h_2h_3$ where $h_3$ is supported in
        a neighborhood of the $\delta_i$'s, $h_1$ is supported on the
        annuli $Y^i_0$ and $h_2$ is supported on the stable
        neighborhoods $\cup_{j=1}^{\infty} Y^i_j$. In particular,
        $h_3\in  W^{864}$ as before. Note that $h_2|\S_{U_i}$ belongs
        to $G(\S_{U_i})$ since we arranged that it is identity on the
        first annulus $Y^i_0$. Thus $h_2\in W^{288}$ by Proposition
        \ref{multi}. It remains to deal with $h_1$, which we rename as
        $h$.

        {\bf Step 3.} We now assume $h$ is supported on $\cup
        Y^i_0$. First repeat Step 1 for each annulus $Y^i_0$ and write
        $h=h_1h_2h_3$ with $h_1,h_3\in W^{864}$ and with
        $h_2$ supported on a finite collection of pairwise disjoint
        stable neighborhoods $\S_{V^i_j}\subset Y^i_0$, and in fact we
        may assume that $h_2$ is identity in a neighborhood of each
        boundary $\partial \S_{V^i_j}$, which we call $\gamma^i_j$. Again rename
        $h=h_2$. Choose
        pairwise disjoint $Z^i_j\subset U_i$ near $x_i$ (and so away
        from the ends of $Y^i_0$) homeomorphic to $V^i_j$. Let
        $\S_{Z^i_j}$ be the corresponding surfaces and arrange that
        they are pairwise disjoint, disjoint from all the annuli in
        the support of $h$, and $\S_{Z^i_j}\subset (\S_{U_i}\setminus Y^{i}_{0})$. Choose
        pairwise disjoint arcs $\alpha^i_j$ connecting $\gamma^i_j$
        with $\partial \S_{Z^i_j}$, intersecting the two curves only
        at the endpoints,
        and disjoint from all other such curves. Also, ensure that
        $\alpha^i_j\subset\S_{U_i}$. Use $\alpha^i_j$ to boundary
        connect sum the surface $\S_{V^i_j}$ cut off by $\gamma^i_j$ with
        $\S_{Z^i_j}$. See \Cref{fig:acproofnew} for a schematic picture of this arrangement. The new surface is also a stable neighborhood of the
        same end as $\S_{V^i_j}$, but we now added ``luft'', i.e. an annulus where $h$
        is identity, and now we can apply Proposition \ref{multi} to
        conclude that $h\in W^{288}$. Putting everything together we
        get the power $17\times 288=4896$.
          
          	\begin{figure}[ht!]
	    \centering
	    \def\svgwidth{\textwidth}

\begingroup%
  \makeatletter%
  \providecommand\color[2][]{%
    \errmessage{(Inkscape) Color is used for the text in Inkscape, but the package 'color.sty' is not loaded}%
    \renewcommand\color[2][]{}%
  }%
  \providecommand\transparent[1]{%
    \errmessage{(Inkscape) Transparency is used (non-zero) for the text in Inkscape, but the package 'transparent.sty' is not loaded}%
    \renewcommand\transparent[1]{}%
  }%
  \providecommand\rotatebox[2]{#2}%
  \newcommand*\fsize{\dimexpr\f@size pt\relax}%
  \newcommand*\lineheight[1]{\fontsize{\fsize}{#1\fsize}\selectfont}%
  \ifx\svgwidth\undefined%
    \setlength{\unitlength}{391.91338006bp}%
    \ifx\svgscale\undefined%
      \relax%
    \else%
      \setlength{\unitlength}{\unitlength * \real{\svgscale}}%
    \fi%
  \else%
    \setlength{\unitlength}{\svgwidth}%
  \fi%
  \global\let\svgwidth\undefined%
  \global\let\svgscale\undefined%
  \makeatother%
  \begin{picture}(1,0.44369559)%
    \lineheight{1}%
    \setlength\tabcolsep{0pt}%
    \put(0,0){\includegraphics[width=\unitlength,page=1]{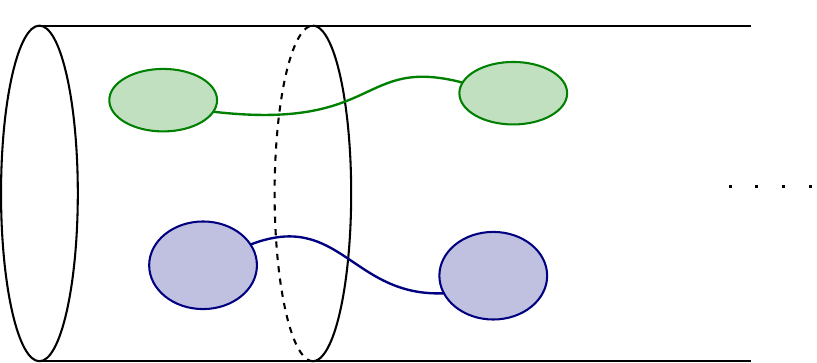}}%
    \put(0.21524009,0.43129417){\color[rgb]{0,0,0}\makebox(0,0)[lt]{\lineheight{1.25}\smash{\begin{tabular}[t]{l}$Y^{i}_{0}$\end{tabular}}}}%
    \put(0.16250917,0.32091703){\color[rgb]{0,0,0}\makebox(0,0)[lt]{\lineheight{1.25}\smash{\begin{tabular}[t]{l}$\Sigma_{V^{i}_{1}}$\end{tabular}}}}%
    \put(0.21109585,0.09757194){\color[rgb]{0,0,0}\makebox(0,0)[lt]{\lineheight{1.25}\smash{\begin{tabular}[t]{l}$\Sigma_{V^{i}_{2}}$\end{tabular}}}}%
    \put(0.60234659,0.3157858){\color[rgb]{0,0,0}\makebox(0,0)[lt]{\lineheight{1.25}\smash{\begin{tabular}[t]{l}$\Sigma_{Z^{i}_{1}}$\end{tabular}}}}%
    \put(0.58444621,0.1163415){\color[rgb]{0,0,0}\makebox(0,0)[lt]{\lineheight{1.25}\smash{\begin{tabular}[t]{l}$\Sigma_{Z^{i}_{2}}$\end{tabular}}}}%
    \put(0.49661557,0.43129417){\color[rgb]{0,0,0}\makebox(0,0)[lt]{\lineheight{1.25}\smash{\begin{tabular}[t]{l}$\Sigma_{U_{i}}\setminus Y^{i}_{0}$\end{tabular}}}}%
    \put(0.43953855,0.30472147){\color[rgb]{0,0,0}\makebox(0,0)[lt]{\lineheight{1.25}\smash{\begin{tabular}[t]{l}$\alpha^{i}_{1}$\end{tabular}}}}%
    \put(0.43953855,0.13125006){\color[rgb]{0,0,0}\makebox(0,0)[lt]{\lineheight{1.25}\smash{\begin{tabular}[t]{l}$\alpha^{i}_{2}$\end{tabular}}}}%
  \end{picture}%
\endgroup%

		    \caption{An example of how to disjointly extend stable neighborhoods in Step 3.}
	    \label{fig:acproofnew}
	\end{figure}
	
 \end{proof}

\subsection{Failure of Automatic Continuity}\label{ssec:acfailure}

In this section we provide tools in order to generate discontinuous homomorphisms. All of our discontinuous homomorphisms will factor through the mapping class group $\Map(\S) = \Homeo(\S)/\Homeo_{0}(\S)$ and appear as an application of the following theorem. We note that this theorem provides a general tool for building discontinuous homomorphisms for closed subgroups of $\Map(\S)$. It is not stated explicitly in this form in \cite{Domat2022}, but follows from the arguments in sections 7,8, and 10. First we need to recall the definition of a nondisplaceable subsurface and a nondisplaceable sequence. 

\begin{DEF}
	Let $K \subset \S$ be a connected subsurface and $G < \Map(\S)$ a subgroup. We say that $K$ is \textbf{$G$-nondisplaceable} if $K$ and  $g(K)$ have an essential intersection for all $g \in G$. A \textbf{$G$-nondisplaceable sequence} is a sequence $\{K_{i}\}_{i=1}^{\infty}$ so that 
	\begin{enumerate}[(i)]
		\item each $K_{i}$ is homeomorphic to a fixed finite-type surface $K$ of sufficient complexity to carry a pseudo-Anosov homeomorphism,
		\item $K_{i} \cap K_{j} = \emptyset$ for all $i \neq j$,
		\item the collection $\{K_{i}\}$ eventually has trivial intersection with every finite-type subsurface of $S$, and
		\item each $K_{i}$ is $G$-nondisplaceable.
	\end{enumerate}
\end{DEF}

\begin{THM}\cite[Sections 7,8, and 10]{Domat2022}\label{thm:discontinuousold}
    If $G$ is a closed subgroup of $\Map(\S)$ and has a $G$-nondisplaceable sequence $\{K_{i}\}_{i=1}^{\infty}$ such that $G$ contains $\Map(K_{i})$ for all $i$, then $G$ fails to have automatic continuity. In particular, there exists a discontinuous homomorphism $G \rightarrow \Q$. 
\end{THM}

\begin{proof}
	Let $\mathcal{K}=\{K_{i}\}_{i=1}^{\infty}$ be a $G$-nondisplaceable sequence so that each $K_{i}$ is homeomorphic to a fixed finite-type surface $K$. Let $f \in \Map(K)$ be a pseudo-Anosov mapping class and write $f_{i} \in G$ for the map defined to be $f$ on $K_{i}$ and the identity elsewhere. Then, for $A=(a_{i})_{i=1}^{\infty}$ any unbounded sequence of natural numbers we form the mapping class
	\begin{align*}
		f_{\mathcal{K},A} \defeq \prod_{i=1}^{\infty} f_{i}^{a_{i}} \in G.
	\end{align*}
	Now we apply \cite[Theorem 7.1]{Domat2022} to see that $f_{\mathcal{K},A}$ cannot be written as a product of commutators in $G$. We remark that \cite[Theorem 7.1]{Domat2022} is stated specifically in the case that $G$ is the closure of compactly supported mapping classes. However, the steps of the proof only require one to have a $G$-nondisplaceable sequence.  In particular, the proof relies on constructing a sequence of Bestvina-Bromberg-Fujiwara projection complexes \cite{BBF2015} and the projection axioms are verified in \cite[Lemma 3.8]{Domat2022} for any collection of pairwise overlapping finite-type subsurfaces in an infinite-type surface. The proof then proceeds by using actions on these projection complexes to define quasimorphisms that coarsely count the exponents $a_{i}$. Once again, this step only relies on the fact that we began with a pseudo-Anosov defined on a finite-type subsurface by appealing to \cite[Proposition 11]{BF2002} and \cite[Proposition 2.9]{BBF2016} (see also \cite[Lemmas 7.4-7.6]{Domat2022}).
	
	Next we run the exact same argument as in \cite[Section
          8]{Domat2022} to see that any such element
        $f_{\mathcal{K},A}$, with $A = (i!)_{i=1}^{\infty}$, generates a copy 
        of $\Q$ in $H_{1}(G;\Z)$. By factoring through the abelianization, we obtain a homomorphism $G \rightarrow \Q$. Finally we note that any finite sub-product of $f_{\mathcal{K},A}$ is trivial in $H_{1}(G;\Z)$ (see \cite[Section 8.1.2]{Domat2022} for technicalities in the low genus case) and hence this map must necessarily be discontinuous. In fact, one can use this to generate $2^{2^{\aleph_{0}}}$ many discontinuous homomorphisms (see \cite[Section 10]{Domat2022}). 
\end{proof}

\subsection{Proof of \Cref{thm:surfaceclassification}}

We need one final lemma that gives an alternate topological characterization of telescoping.

\begin{LEM}\label{lem:telescopefail}
	Let $\S$ be a stable surface. If an end $x \in E(\S)$ is not telescoping, then either 
	\begin{enumerate}[(i)]
		\item $x$ is isolated in $E_{g}(\S)$, 
		\item $x$ has a predecessor $y$ with $E(y)$ countable, or
		\item there exists a family of nested stable neighborhoods $\{U_{n}\}$ descending to $x$ such that each annulus $U_{n}\setminus U_{n+1}$ contains an end $z_{n}$ with $E(z_{n}) \cap (U_{0}\smallsetminus U_{n})  = \emptyset$. 
	\end{enumerate}
\end{LEM}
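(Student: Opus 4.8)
The plan is to unwind the negation of \Cref{def:telescopingend} and route the three resulting failure modes into the three conclusions. Since $x$ is not telescoping it is not an isolated puncture, not of Cantor type, and it is \emph{not} simultaneously non-isolated in $E_g(\S)$, a successor, and possessed of only Cantor-type predecessors. If $x$ is isolated in $E_g(\S)$ we are in conclusion (i). Otherwise the surviving obstruction is that $x$ is not a successor or has a predecessor $y$ that is not of Cantor type; in the latter case \Cref{lem:Cantor-type} (local closedness of $E(y)$ forces ``not of Cantor type'' to mean ``$E(y)$ countable'') yields conclusion (ii). So everything reduces to the case in which $x$ is non-isolated in $E_g(\S)$, is not of Cantor type, and is not a successor, and here I will build the nested neighborhoods of conclusion (iii).

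First I would record two facts about such an $x$: $E(x)$ is countable, and $x$ is isolated in $E(x)$ — otherwise the perfect-set argument from the proof of \Cref{lem:Cantor-type} would make $E(x)$ a Cantor set. Thus there is a clopen neighborhood $N$ of $x$ with $N\cap E(x)=\{x\}$, and all neighborhoods below are taken inside $N$. The key reduction is that it suffices to find \emph{one} stable neighborhood $U_0\subseteq N$ of $x$ satisfying the property $(\ast)$: every stable neighborhood $W\subseteq U_0$ of $x$ contains an end $z\ne x$ with $E(z)\cap(U_0\setminus W)=\emptyset$. Granting $(\ast)$, one constructs $\{U_n\}$ greedily — apply $(\ast)$ with $W=U_n$ to get such a $z_n\in U_n$, then pick a smaller stable $U_{n+1}\subsetneq U_n$ (also inside the $n$-th term of a fixed descending sequence, so that $U_n\searrow x$) omitting the single end $z_n$; then $z_n\in U_n\setminus U_{n+1}$ and $E(z_n)\cap(U_0\setminus U_n)=\emptyset$, which is exactly (iii).

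It remains to produce $U_0$, and here I would argue by contradiction — this is the step I expect to be the main obstacle. If no $U_0$ works, fix any stable $U_0\subseteq N$; then there is a stable $W\subsetneq U_0$ of $x$ such that $E(z)\cap(U_0\setminus W)\ne\emptyset$ for every $z\in W\setminus\{x\}$. Put $B:=U_0\setminus W$, which is a compact stable space (sub-neighborhoods of stable points are stable); it is nonempty, since $W=U_0$ would force $x$ to be an isolated end, contradicting the case hypotheses, and it is disjoint from $E(x)$ because everything lives in $N$. The crucial observation is that every end $z\prec x$ has a representative of its class in $B$: because $x$ is a maximal end of $U_0$ (\Cref{lem:MRstable}), $z\preceq x$ applied to the neighborhood $W$ gives a homeomorphism $f$ of $\S$ sending a neighborhood of $z$ into $W$, hence an end $f(z)\in W$ with $f(z)\sim z$ and $f(z)\ne x$, and the defining property of $W$ then forces $E(z)\cap B\ne\emptyset$. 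Finally, covering $B$ by finitely many stable neighborhoods and taking the $\preceq$-maximal classes among their top types (\Cref{lem:MRstable}, as in the proof of \Cref{prop:stablefinmax}) gives finitely many pairwise incomparable classes $y_1,\dots,y_m$ with $m\ge1$, each realized in $B\subseteq U_0\setminus E(x)$ so that $y_j\preceq x$ and $y_j\ne[x]$, i.e.\ $y_j\prec x$, and every end of $B$ — hence, by the previous sentence, every $z\prec x$ — lies $\preceq$-below some $y_j$. By \Cref{def:successor} this exhibits $x$ as a successor, contradicting the case hypothesis, so $(\ast)$ holds for some $U_0$. The two points requiring care are arranging $N\cap E(x)=\{x\}$ at the outset so that the maximal classes of $B$ are genuine strict predecessors of $x$, and handling the global preorder $\preceq$ on the subspace $B$ (one must track the top types of the covering stable neighborhoods with respect to $\preceq$ on all of $E(\S)$, not merely on $B$).
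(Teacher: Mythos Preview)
Your proposal is correct and takes essentially the same approach as the paper: both reduce to the case where $x$ is not isolated in $E_g(\S)$ and not of Cantor type, and both hinge on the observation that if $x$ fails to be a successor then arbitrarily small neighborhoods of $x$ contain end-types not seen closer to the boundary. The only difference is organizational---the paper fixes a descending sequence $U_k$ upfront and splits on whether the union $\bigcup_k M_k$ of maximal annulus-types is finite (forcing $x$ to be a successor, hence case~(ii)) or infinite (giving case~(iii) after passing to a subsequence), whereas you first isolate the auxiliary property~$(\ast)$, establish it by the same successor-contradiction, and then run a greedy construction; the content is the same.
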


\begin{proof}
	Let $x \in E(\S)$ be an end that is neither telescoping nor
        isolated in $E_{g}(\S)$. In particular, $x$ is not isolated in
        $E_{g}(\S)$ and $E(x)$ is a discrete set (by
        \Cref{lem:Cantor-type}). Let $U$ be a stable neighborhood of
        $x$ and $U_{k} \searrow x$ a family of nested subsets of $U$
        that descends to $x$. For each $k$, let $M_{k}$ denote the set of equivalence classes in $E(\S)$ that both intersect $U_{k}\setminus U_{k+1}$ and are maximal in $U_{k}\setminus U_{k+1}$. Note that each of $U_{k} \setminus U_{k+1}$ is a stable surface and so $M_{k}$ is finite for each $k$. We now have two cases to consider.
	
	\underline{Case 1:} Suppose $\bigcup_{k=0}^{\infty} M_{k}$ is finite. In other words, there exists some $k_{0}$ such that every equivalence class of end in $U \setminus \{x\}$ intersects $U_{0} \setminus U_{k_{0}}$. This implies that $x$ has finitely many predecessors and hence is a successor. Therefore, since we assumed that $x$ is \emph{not} telescoping, one of these predecessors, $y$, must not be of Cantor type. Thus by \Cref{lem:Cantor-type} we have that $E(y)$ is countable. 

	\underline{Case 2:} Suppose $\bigcup_{k=0}^{\infty} M_{k}$ is infinite. Thus, since each $M_{k}$ is finite, we must have that the annuli $U_{k} \setminus U_{k+1}$ are seeing new equivalence classes of ends as $k\rightarrow \infty$. Therefore, after passing to a subsequence of the $U_{k}$, we can find a family $\{U_{n}\}$ satisfying (iii)
\end{proof}

We are now ready to prove our main classification theorem for surfaces.

\begin{proof}[Proof of \Cref{thm:surfaceclassification}]
	The positive direction is exactly the statement of \Cref{thm:surfaceACmain}. Now we assume that $\S$ has an end $x \in E(\S)$ that is not telescoping. Our discontinuous map will factor through the mapping class group, $\Map(\S)$. 
	
	Since $E(x)$ is not of Cantor type, $E(x)$ is countable by \Cref{lem:Cantor-type} and $x$ is isolated in $E(x)$. Thus we can find a simple separating curve $\gamma$ so that $\gamma$ separates $x$ from all other ends in $E(x)$. Let $\S_{\gamma}$ be the subsurface containing $x$ with boundary $\gamma$. We may assume, without loss of generality, that $E(\S_{\gamma})$ is a stable neighborhood of $x$. Note that the stabilizer, $\Stab(\gamma)$, is a closed countable index subgroup of $\Map(\S)$. Thus by \Cref{lem:induction} it suffices to build a discontinuous homomorphism from $\Stab(\gamma)$ to a countable discrete group. Our new goal is to find a $\Stab(\gamma)$-nondisplaceable sequence in order to apply \Cref{thm:discontinuousold}. The proof breaks down into the three cases from \Cref{lem:telescopefail}
	
	\underline{Case (i):} Suppose that $x\in E_{g}(\S)$ is
        isolated. We claim that any simple curve $\alpha$ in
        $\S_{\gamma}$ that separated $x$ from $\gamma$ is
        $\Stab(\gamma)$-nondisplaceable. Indeed, $\alpha$ cuts
        $\S_{\gamma}$ into two components, one with finite genus and
        one with infinite genus. If $f \in \Stab(\gamma)$ mapped
        $\alpha$ completely into either component, it would change the
        genus of the finite genus piece. As $f$ fixes $\gamma$, this
        cannot happen. Thus we conclude that $f(\alpha) \cap \alpha
        \neq \emptyset$. Finally, to build a
        $\Stab(\gamma)$-nondisplaceable sequence we can take
        sufficiently spaced out sequence of such curves
        $\{\alpha_{i}\}$ that converge to $x$ and for each
        $\alpha_{i}$ take a genus two subsurface $K_{i}$ that contains
        it and is disjoint from all other such subsurfaces.
	
	\underline{Case (ii):} Suppose that $x$ has a countable predecessor, $y$. Write $E'(y) = E(y) \cap \S_{\gamma}$ and note that $E'(y)$ is still countably infinite. Now this case follows exactly as in case 1 except points in $E'(y)$ play the role of genus.
	
	\begin{figure}[ht!]
	    \centering
	    \def\svgwidth{\textwidth}

\begingroup%
  \makeatletter%
  \providecommand\color[2][]{%
    \errmessage{(Inkscape) Color is used for the text in Inkscape, but the package 'color.sty' is not loaded}%
    \renewcommand\color[2][]{}%
  }%
  \providecommand\transparent[1]{%
    \errmessage{(Inkscape) Transparency is used (non-zero) for the text in Inkscape, but the package 'transparent.sty' is not loaded}%
    \renewcommand\transparent[1]{}%
  }%
  \providecommand\rotatebox[2]{#2}%
  \newcommand*\fsize{\dimexpr\f@size pt\relax}%
  \newcommand*\lineheight[1]{\fontsize{\fsize}{#1\fsize}\selectfont}%
  \ifx\svgwidth\undefined%
    \setlength{\unitlength}{409.5841968bp}%
    \ifx\svgscale\undefined%
      \relax%
    \else%
      \setlength{\unitlength}{\unitlength * \real{\svgscale}}%
    \fi%
  \else%
    \setlength{\unitlength}{\svgwidth}%
  \fi%
  \global\let\svgwidth\undefined%
  \global\let\svgscale\undefined%
  \makeatother%
  \begin{picture}(1,0.4574732)%
    \lineheight{1}%
    \setlength\tabcolsep{0pt}%
    \put(0,0){\includegraphics[width=\unitlength,page=1]{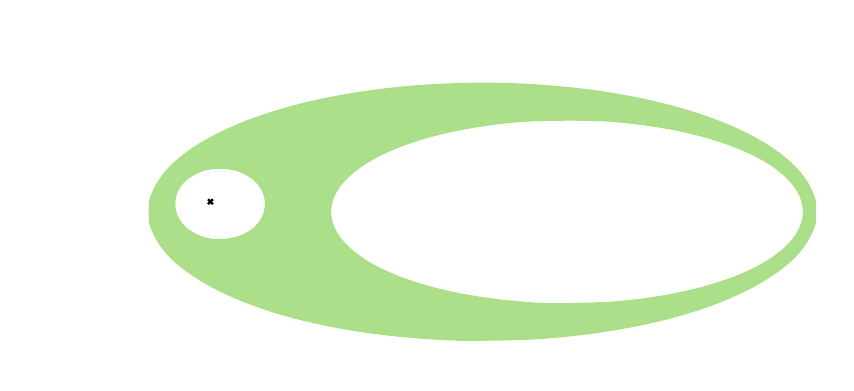}}%
    \put(0.24907461,0.19784686){\color[rgb]{0,0,0}\makebox(0,0)[lt]{\lineheight{0}\smash{\begin{tabular}[t]{l}$z_{n}$\end{tabular}}}}%
    \put(0,0){\includegraphics[width=\unitlength,page=2]{nondisplaceablesucc.pdf}}%
    \put(0.41479749,0.19706542){\color[rgb]{0,0,0}\makebox(0,0)[lt]{\lineheight{0}\smash{\begin{tabular}[t]{l}$z_{n+1}$\end{tabular}}}}%
    \put(0,0){\includegraphics[width=\unitlength,page=3]{nondisplaceablesucc.pdf}}%
    \put(0.07971193,0.19784686){\color[rgb]{0,0,0}\makebox(0,0)[lt]{\lineheight{0}\smash{\begin{tabular}[t]{l}$z_{n-1}$\end{tabular}}}}%
    \put(0,0){\includegraphics[width=\unitlength,page=4]{nondisplaceablesucc.pdf}}%
    \put(0.86497683,0.19706542){\color[rgb]{0,0,0}\makebox(0,0)[lt]{\lineheight{0}\smash{\begin{tabular}[t]{l}$x$\end{tabular}}}}%
    \put(0,0){\includegraphics[width=\unitlength,page=5]{nondisplaceablesucc.pdf}}%
    \put(0.37978134,0.43042203){\color[rgb]{0,0,0}\makebox(0,0)[lt]{\lineheight{0}\smash{\begin{tabular}[t]{l}$\gamma = \partial \Sigma_{\gamma}$\end{tabular}}}}%
    \put(0.48408222,0.37142089){\color[rgb]{0,0,0}\makebox(0,0)[lt]{\lineheight{0}\smash{\begin{tabular}[t]{l}$\partial \Sigma_{U_{n}}$\end{tabular}}}}%
    \put(0.27317091,0.22613208){\color[rgb]{0,0,0}\makebox(0,0)[lt]{\lineheight{0}\smash{\begin{tabular}[t]{l}$\beta_{n}$\end{tabular}}}}%
    \put(0.56608025,0.28060312){\color[rgb]{0,0,0}\makebox(0,0)[lt]{\lineheight{0}\smash{\begin{tabular}[t]{l}$\partial \Sigma_{U_{n+1}}$\end{tabular}}}}%
    \put(0.34975794,0.11007461){\color[rgb]{0,0,0}\makebox(0,0)[lt]{\lineheight{0}\smash{\begin{tabular}[t]{l}$P_{n}$\end{tabular}}}}%
  \end{picture}%
\endgroup%

		    \caption{Building a nondisplaceable pair of pants, $P_{n}$, in case (iii).}
	    \label{fig:nondisplaceablesucc}
	\end{figure}
	
	\underline{Case (iii):} Let $\{U_{n}\}$ be a family of nested
        stable neighborhoods descending to $x$ such that each annulus
        $U_{n} \setminus U_{n+1}$ contains an end $z_{n}$ with
        $E(z_{n}) \cap (U_{0}\smallsetminus U_n) = \emptyset$. We may assume, without loss of generality, that $U_{0} = E(\S_{\gamma})$. We will build a non-displaceable pair of pants for each fixed $n$. Let $P_{n}$ be the pair of pants made up of the curves $\partial \S_{U_{n}}$, $\partial \S_{U_{n+1}}$, and a separating curve, $\beta_{n}$ that cuts off all ends of $U_{n} \setminus U_{n+1}$. See \Cref{fig:nondisplaceablesucc}. We claim that these $P_{n}$ are $\Stab(\gamma)$-nondisplaceable. If $f \in \Stab(\gamma)$, then $f(P_{n})$ cannot land in the component cut off by $\partial \S_{U_{n}}$ since $E(\S_{\gamma}) \setminus U_{n}$ contains no points of $E(z_{n})$. Similarly, if $f(P_{n})$ landed in the component cut off by $\partial \S_{U_{n+1}}$, then $f(E(\S_{\gamma}) \setminus U_{n})$ would intersect $E(z_{n})$. Finally, $f(P_{n})$ cannot land in the component cut off by $\beta_{n}$ since $P_{n}$ must separate $\gamma$ and $x$. As above we can expand these pairs of pants to subsurfaces of sufficiently high complexity and pass to a disjoint subsequence to obtain a $\Stab(\gamma)$-nondisplaceable sequence. 
	
We can now apply \Cref{thm:discontinuousold} to the $\Stab(\gamma)$-nondisplaceable sequences in each of these cases in order to see that $\Stab(\gamma)$ fails to have automatic continuity. Finally, we use \Cref{lem:induction} to conclude that $\Map(\S)$ and hence $\Homeo(\S)$ fails to have automatic continuity. 

\end{proof}

\subsection{Unknown Example} \label{ssec:unknownex}

In this final section we construct an unstable surface $\S$ where our techniques fail to decide automatic continuity. The end space of $\S$ will contain countably many incomparable Cantor sets of ends that themselves converge onto a Cantor set. Building these Cantor sets while avoiding ends that violate \Cref{thm:surfaceclassification} (or the examples of \Cref{rmk:failure}) will take some care.  
         
 We construct the end space of $\Sigma$ inductively. For colored second countable Stone spaces $U_{1},\cdots, U_{k}$, let $C(U_{1},\ldots,U_{k})$ be the Cantor 
set with a copy of $U_{i}$ added into every ``missing interval'' of the Cantor set. Similarly, let $C^{g}(U_{1},\ldots,U_{k})$ be the same except that points in the Cantor set are non-planar.  When $k=0$
we use the notation $C(\emptyset)$ and $C^{g}(\emptyset)$. Let $p$ be an isolated point. We define out 
\emph{level one} cantor sets at follows: 
\[
L_{0,1}= C^g(\emptyset), \quad 
L_{1,1}= C(\emptyset), \quad
L_{2,1}= C^g(p), \quad \text{and}\quad 
L_{3,1} = C(p). 
\]
To reiterate, $L_{0,1}$ is a non-planar Cantor set, $L_{1,1}$ is a planar Cantor set, 
$L_{2,1}$ is a Cantor set with each point accumulated by a sequence of punctures
and $L_{3,1}$ is a non-planar Cantor set where each point accumulated by a sequence of punctures. 
Note that the maximal ends in these space are not comparable. 
Also, every point in these spaces satisfies the assumptions of Theorem~\ref{thm:surfaceclassification}.
	
 Now we can use all non-empty subsets of $\{L_{1,1},L_{2,1},L_{3,1}\}$ to make level two sets. Thus we obtain four level two sets, 
 \[
L_{0,2}= C(L_{1,1},L_{2,1}), \quad 
L_{1,2}= C(L_{2,1},L_{3,1}), \quad
L_{2,2}= C(L_{1,1},L_{3,1}), 
\]
and 
\[
L_{3,2} = C(L_{1,1},L_{2,1}, L_{3,1}). 
\]
Note that $L_{0,1}$ is incomparable to all level two sets. Then we reserve $L_{0,2}$ and again use 
non-empty subsets of $\{L_{1,2},L_{2,2},L_{3,2}\}$ to construct level three sets. We recursively continue 
this process to construct sets at all levels. That is, for all $n\geq 1$, 
 \[
L_{0,n+1}= C(L_{1,n},L_{2,n}), \quad 
L_{1,n+1}= C(L_{2,n},L_{3,n}), \quad
L_{2,n+1}= C(L_{1,n},L_{3,n}), 
\]
and 
\[
L_{3,n+1} = C(L_{1,n},L_{2,n}, L_{3,n}). 
\]
Each time we reserve $L_{0,n}$ so the maximal points in $L_{0,n}$ are not comparable 
to any point in $L_{i,m}$ for $m \geq n$. Thus we obtain the countable collection, $L_{0,1},L_{0,2},L_{0,3},\ldots$, 
of incomparable Cantor sets. 
	
Finally, for each $n \in \N$, we let $\Lambda_{n}$ be the surface with end space $L_{0,n}$ and a single 
boundary component and infinite genus where the ends labeled non-planar are accumulated by genus. 
We then construct $\S$ by beginning with the surface obtained by thickening a rooted binary tree, removing a disk from each pair of pants, and, along the missing disks at level $n$, gluing a copy of $\Lambda_{n}$. Each end of $\S$ is either an isolated puncture or of Cantor type. 
Furthermore, $\S$ has countably many maximal types of ends, corresponding to each of the levels $L_{0,n}$, and one additional maximal Cantor type of end, $L_{\infty}$, which is accumulated by the $L_{0,n}$ as $n \rightarrow \infty$. The ends of $L_{\infty}$ are non-telescoping and so we cannot run our argument for automatic continuity. However, we are also not able to find a nondisplaceable sequence of subsurfaces in order to apply \Cref{thm:discontinuousold} to build a discontinuous map. As such, it is still open whether $\Homeo(\S)$ or $\Map(\S)$ has automatic continuity.

\bibliographystyle{plain}

\end{document}